\newtheorem{thm}{Theorem}[section]
\newtheorem{prop}[thm]{Proposition}
\newtheorem{Ass}[thm]{Assumptions}
\newtheorem{lem}[thm]{Lemma}
\newtheorem{cor}[thm]{Corollary}
\newtheorem{qst}[thm]{Question}
\theoremstyle{definition}
\newtheorem{eg}[thm]{Example}
\renewcommand{\phi}{\varphi}
\newcommand{\im}{\mathop{\mathrm{Im}}\nolimits}
\renewcommand{\ker}{\mathop{\mathrm{Ker}}\nolimits}
\newcommand{\coker}{\mathop{\mathrm{Coker}}\nolimits}
\newcommand{\iso}{\cong}
\renewcommand{\bar}{\protect\overline}
\renewcommand{\i}[1]{\mathfrak{#1}}
\newcommand{\p}{\i{p}}
\newcommand{\q}{\i{q}}
\newcommand{\y}{\mathbf y}
\newcommand{\rank}{\mathop{\mathrm{rank}}\nolimits}
\newcommand{\ass}{\mathop{\mathrm{Ass}}\nolimits}
\newcommand{\h}{\mathop{\mathrm{ht}}\nolimits}
\newcommand{\pd}{\mathop{\mathrm{pd}}\nolimits}
\newcommand{\reg}{\mathop{\mathrm{reg}}\nolimits}
\newcommand{\tor}{\mathop{\mathrm{Tor}}\nolimits}
\newcommand{\ext}{\mathop{\mathrm{Ext}}\nolimits}
\renewcommand{\*}{\bullet}
\begin{document}

\title{A Tight Bound on the Projective Dimension of Four Quadrics}
\author{Craig Huneke}
\address{University of Virginia, Department of Mathematics, 141 Cabell Drive, Kerchof Hall, P.O. Box 400137, Charlottesville, VA 22904-4137}
\email{huneke@virginia.edu}
\author{Paolo Mantero}
\address{Department of Mathematical Sciences,
309 SCEN - 1, University of Arkansas - Fayetteville, AR 72701}
\email{pmantero@uark.edu}
\author{Jason McCullough}
\address{Department of Mathematics, Iowa State University, Ames, IA 50011}
\email{jmccullo@iastate.edu}
\author{Alexandra Seceleanu}
\address{University of Nebraska, Department of Mathematics, 203 Avery Hall, Lincoln, NE 68588}
\email{aseceleanu@unl.edu}

\begin{abstract}
Motivated by Stillman's question, we show that the projective dimension of an ideal generated by four quadric forms in a polynomial ring is at most $6$; moreover, this bound is tight.  We achieve this bound, in part, by giving a characterization of the low degree generators of ideals primary to height three primes of multiplicities one and two.    \end{abstract}



\maketitle

\section{Introduction}

Let $S = k[x_1,\ldots,x_n]$ be  a polynomial ring over a field $k$, and let $I = (f_1,\ldots,f_N)$ be a homogeneous ideal of $S$.  Classical theorems of Hilbert or Auslander and Buchsbaum give upper bounds on the projective dimension of $S/I$ (or, equivalently, of $I$) in terms of $n$, the number of variables of $S$.  Motivated by computational efficiency issues, Stillman \cite{PS} posed the following question:
\begin{qst}[Stillman {\cite[Problem 3.14]{PS}}]\label{Stillman} Is there a bound on $\pd(S/I)$ depending only on $d_1,\ldots,d_N$, and $N$, where $d_i = \deg(f_i)$?
\end{qst}
Ananyan and Hochster \cite{AH2} recently proved an affirmative answer to Stillman's Question in full generality.  They prove that for any homogeneous ideal $I$ generated by $N$ forms of degree at most $d$ either the generators form a regular sequence, implying that $N$ is an upper bound for the projective dimension, or else the generators of $I$ are contained in an ideal generated by a bounded number (in terms of $N$ and $d$) of forms of strictly smaller degree.  A delicate inductive proof shows the existence of a bound, but even in the case when $I$ is generated by quadrics, the bounds they produce are very large and far from optimal.  Tighter bounds in more specific cases have been given in the following situations:
\begin{enumerate}
\item In a previous paper \cite{AH}, Ananyan and Hochster showed that if $I$ is generated by $N$ quadric forms, then $\pd(S/I)$ has an upper bound asymptotic to $2N^{2N}$.  They also give more general result for non-homogeneous ideals generated in degree at most $2$.  In \cite[Theorem 4.2]{AH2} they claim a new upper bound of $2^{N+1}(N-2) + 4$. 
\item When $I$ is minimally generated by $N$ quadrics and ${\rm ht}(I) = 2$, we previously showed \cite{HMMS} that $\pd(S/I) \le 2N - 2$.  (See Theorem~\ref{ht2}.)  Moreover, for all $N$ we construct examples where equality is achieved.  Thus the bound here is optimal.
\item When $I$ is minimally generated by $3$ cubics, Engheta \cite{En3} showed that $\pd(S/I) \le 36$, while the largest known example satisfies $\pd(S/I) = 5$.  
\item When $\mathrm{k} = p > 0$ and $S/I$ is an $F$-pure ring, De Stefani and N\'u\~nez-Betancourt \cite{DN} showed that $\pd(S/I) \le \mu(I)$, where $\mu(I)$ denotes the minimal number of generators of $I$.
\end{enumerate}

It remains open as to what the optimal bounds for projective dimension are.  The general optimal bound, however, must be rather large.  
Examples of Beder et. al. \cite{BMNSSS} show that the projective dimension of ideals generated by $3$ degree-$d$ forms can grow exponentially with respect to $d$.  

Further motivating Question~\ref{Stillman}, Caviglia showed it was equivalent to the following analogous question for Castelnuovo-Mumford regularity. (See \cite[Theorem 2.4]{MS}.) 
\begin{qst}[{Stillman \cite[Problem 3.15]{PS}}]\label{Stillman2}
Is there a bound on $\reg(S/I)$ depending only on $d_1,\ldots,d_N$, and $N$, where $d_i = \deg(f_i)$?
\end{qst}

While $(1)$  above shows that $\pd(S/I)$ is bounded for any ideal generated by $N$ quadrics, these bounds are exponential in $N$.  It is clear already in the case of an ideal generated by $3$ quadrics that these bounds are far from optimal (cf. \cite[Proposition 24]{MS}).  In this paper we give a tight upper bound on the projective dimension of an ideal generated by $4$ quadrics.  Specifically, we prove
\begin{thm}
Let $S$ by a polynomial ring over a field $k$, and let $I = (q_1,q_2,q_3,q_4)$ be an ideal of $S$ generated by $4$ homogeneous polynomials of degree $2$.
Then ${\rm pd}(S/I)\leq 6$.
\end{thm}
Examples from \cite{Mc} of ideals $I$ generated by $4$ quadrics with $\pd(S/I) = 6$ show that the above bound is optimal.  Our proof follows a technique similar to Engheta's in \cite{En} and \cite{En3} in dividing into cases by height and multiplicity.  We notably rely on the tight bound for the height $2$ case proved in \cite{HMMS} and the height $3$ multiplicity $6$ case proved in \cite{HMMS3}.  The remaining height $3$ cases constitute the bulk of this paper and require a variety of approaches.  

Previously the authors had posted a paper to the arXiv proving an upper bound of $9$; however, we suspected that $6$ should be the tight upper bound.  We finally reduced all of the remaining cases to $6$ producing this paper.  This was done at the expense of the length and complexity of the paper.  However, we do not believe these arguments can be significantly shortened without weakening the upper bound and, along with Theorem~\ref{ht2}, this paper represents the only other nontrivial known tight bound for Stillman's Question.  A table summarizing the many cases is given in Section~\ref{main}.

The rest of the paper is organized as follows:
in Section~\ref{back}, we collect many of the results that we cite and fix notation; in Section~\ref{basic}, we prove some results that are used several times throughout the paper; in Sections~\ref{primary}, \ref{22struct}, we characterize certain height three primary ideals that can occur as the unmixed part of an ideal of four quadrics or a direct link of such an ideal; in Sections~\ref{e2}, \ref{e35}, \ref{e4}, we handle the cases where our ideal has multiplicity $2$, $3$, $4$ or $5$.  Section~\ref{leftover}, handles the cases where the ideal has multiplicity $4$ or $5$ and we know that any complete intersection contained in our ideal is extended from a $6$ variable polynomial ring.
Finally, we collect all of the previous results to prove our main result in Section~\ref{main}.  \ref{append} contains a list of the many primary ideals needed for Sections~\ref{primary} and \ref{22struct}.  It may be skipped without affecting the logical flow of the paper, but it may also be of independent interest.

\section{Background and Notation}\label{back}

In this section we set the notation that will be used throughout this manuscript and collect several results that will be used in the following sections.

\subsection{Unmixed ideals and multiplicity}

We will frequently use  the following well-known \textit{associativity formula} (also sometimes referred to as the \textit{linearity formula} or \textit{additivity and reduction formula}) to compute the multiplicity of an ideal.
\begin{prop}[cf. {\cite[Theorem 11.2.4]{SH}}]\label{Ass}
If $J$ is an ideal of $S$, then $$e(S/J)=\sum_{\substack{\rm{primes\,\, } \mathfrak{p} \supseteq J\\ \rm{ht}(\mathfrak{p}) = \rm{ht}(J)}}e(S/\mathfrak{p})\lambda(S_\mathfrak{p}/J_\mathfrak{p}).$$
\end{prop}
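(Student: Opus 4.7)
The plan is to prove this by combining a prime filtration of $S/J$ with additivity of the Hilbert-Samuel multiplicity on short exact sequences, and then identifying the counting multiplicity of each minimal prime with the length at its localization.

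First, I would build a filtration $0 = N_0 \subsetneq N_1 \subsetneq \cdots \subsetneq N_r = S/J$ of $S/J$ by finitely generated graded $S$-submodules such that each quotient is, up to degree shift, of the form $S/\mathfrak{p}_i$ for some prime $\mathfrak{p}_i \supseteq J$. Such a filtration is standard: pick an associated prime $\mathfrak{p}_1 \in \ass(S/J)$, let $N_1$ be the cyclic submodule it generates, and iterate on the finitely generated module $S/J / N_1$; the process terminates by Noetherianity. Next, I would invoke additivity of multiplicity on short exact sequences of finitely generated graded modules: if $0 \to M' \to M \to M'' \to 0$ is exact, then $e(M) = e(M') + e(M'')$, with the convention that $e(N) = 0$ whenever $\dim N < \dim M$. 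This reduces to comparing leading terms of Hilbert polynomials and is insensitive to graded shifts. Applying this repeatedly to the filtration yields
$$e(S/J) = \sum_{i \;:\; \dim(S/\mathfrak{p}_i) = \dim(S/J)} e(S/\mathfrak{p}_i).$$
Because $S$ is a polynomial ring (in particular catenary), the condition $\dim(S/\mathfrak{p}_i) = \dim(S/J)$ is equivalent to $\h(\mathfrak{p}_i) = \h(J)$, so the sum is over those $\mathfrak{p}_i$ in the filtration that are minimal primes of $J$ of minimal height.

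To finish, I must count how many times each prime $\mathfrak{p}$ with $\h(\mathfrak{p}) = \h(J)$ and $\mathfrak{p} \supseteq J$ appears in the filtration. The trick is to localize the entire filtration at $\mathfrak{p}$: the quotient $(S/\mathfrak{p}_i)_\mathfrak{p}$ equals $0$ unless $\mathfrak{p}_i \subseteq \mathfrak{p}$, and by minimality of $\mathfrak{p}$ over $J$ the only prime in the filtration contained in $\mathfrak{p}$ is $\mathfrak{p}$ itself (all other $\mathfrak{p}_i$ from the filtration contain $J$ but are incomparable with $\mathfrak{p}$ or strictly larger). The surviving quotients are copies of $\kappa(\mathfrak{p}) = S_\mathfrak{p}/\mathfrak{p} S_\mathfrak{p}$, so the localized filtration is a composition series of the Artinian module $S_\mathfrak{p}/J_\mathfrak{p}$ whose length is exactly the number of times $\mathfrak{p}$ appears among the $\mathfrak{p}_i$. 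Hence that number equals $\lambda(S_\mathfrak{p}/J_\mathfrak{p})$, which is finite precisely because $\mathfrak{p}$ is a minimal prime. Substituting back into the sum gives the claimed formula.

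The main obstacle is really just the bookkeeping: one must be careful that degree shifts in the filtration do not change multiplicities, that the additivity convention handles lower-dimensional pieces correctly, and that the reduction $\h(\mathfrak{p}) = \h(J) \iff \dim(S/\mathfrak{p}) = \dim(S/J)$ uses only catenarity of $S$. No deep input beyond the Hilbert polynomial formalism and existence of prime filtrations is needed, which is why the result can be cleanly cited from \cite{SH}.
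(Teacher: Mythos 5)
Your argument is correct: the prime filtration, additivity of multiplicity on short exact sequences (with lower-dimensional quotients contributing zero), and the localization count identifying the number of occurrences of each minimal prime $\mathfrak{p}$ of minimal height with $\lambda(S_\mathfrak{p}/J_\mathfrak{p})$ are all carried out properly, and the use of catenarity to convert between height and dimension is the right observation since $J$ is homogeneous in a polynomial ring. The paper does not prove this statement but cites \cite[Theorem 11.2.4]{SH}, and your proof is essentially the standard argument given there.
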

Here $e(R)$ denotes the multiplicity of a graded ring $R$, and $\lambda(M)$ denotes the length of an $S$-module $M$.  

Recall that an ideal $J$ of height $h$  is {\it unmixed} if ${\rm ht}(\mathfrak{p}) = h$ for every $\p\in {\rm Ass}(S/J)$. Also, the {\it unmixed part} of $J$, denoted $J^{un}$, is the intersection of all the components of $J$ of minimum height. One has $J\subseteq J^{un}$, and from the associativity formula it follows that $e(S/J) = e(S/J^{un})$.  We follow \cite{En} in using the following notation: if $J$ is an unmixed ideal, we say it is of type
\[\langle e_1,\ldots,e_m; \lambda_1,\ldots,\lambda_m\rangle\]
if $J$ has $m$ associated prime ideals $\p_1,\ldots, \p_m$ of equal height with $e_i = e(S/\p_i)$ and $\lambda_i = \lambda(S_{\p_i}/J_{\p_i}) $ for all $i$.  It follows that $e(S/J) = \sum_{i = 1}^m e_i \lambda_i$.

The following simple fact also follows from the associativity formula:
\begin{lem}[{cf. \cite[Lemma 8]{En}}]\label{unmixedequal}
Let $J \subseteq S$ be an unmixed ideal.  If $I \subseteq S$ is an ideal containing $J$ such that $\textrm{ht}(I) = \textrm{ht}(J)$ and $e(S/I) = e(S/J)$, then $J = I$.
\end{lem}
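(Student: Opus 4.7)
The plan is to apply the associativity formula (Proposition~\ref{Ass}) to both $e(S/J)$ and $e(S/I)$ and carry out a term-by-term comparison. Let $h := \h(J) = \h(I)$. Since $J$ is unmixed of height $h$, its associated primes are precisely its height-$h$ minimal primes, so the associativity sum for $J$ ranges over $\ass(S/J)$. Any prime $\mathfrak{p}\supseteq I$ with $\h(\mathfrak{p})=h$ automatically contains $J$, and therefore, having the same height as $J$, must be a minimal prime of $J$. Hence every prime appearing in the associativity sum for $e(S/I)$ also appears in the sum for $e(S/J)$.

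Next, I would argue locally at each such prime $\mathfrak{p}$. Since $J_\mathfrak{p} \subseteq I_\mathfrak{p}$ and both are $\mathfrak{p}S_\mathfrak{p}$-primary ideals in the Artinian local ring $S_\mathfrak{p}/J_\mathfrak{p}$, one obtains the length inequality $\lambda(S_\mathfrak{p}/I_\mathfrak{p}) \leq \lambda(S_\mathfrak{p}/J_\mathfrak{p})$. Writing out
\[
e(S/I) \;=\; \sum_{\substack{\mathfrak{p} \supseteq I \\ \h(\mathfrak{p}) = h}} e(S/\mathfrak{p}) \lambda(S_\mathfrak{p}/I_\mathfrak{p}) \;\leq\; \sum_{\mathfrak{p} \in \ass(S/J)} e(S/\mathfrak{p}) \lambda(S_\mathfrak{p}/J_\mathfrak{p}) \;=\; e(S/J),
\]
the hypothesis $e(S/I)=e(S/J)$ forces equality term by term. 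Consequently every $\mathfrak{p} \in \ass(S/J)$ must in fact contain $I$, and $\lambda(S_\mathfrak{p}/I_\mathfrak{p}) = \lambda(S_\mathfrak{p}/J_\mathfrak{p})$ for each such $\mathfrak{p}$; combined with the inclusion $J_\mathfrak{p} \subseteq I_\mathfrak{p}$ of finite-length modules, this yields $I_\mathfrak{p} = J_\mathfrak{p}$.

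Finally, I would globalize via primary decomposition. Because $J$ is unmixed, its minimal primary decomposition is $J = \bigcap_{\mathfrak{p} \in \ass(S/J)} (J_\mathfrak{p} \cap S)$. Intersecting the identities $I_\mathfrak{p} = J_\mathfrak{p}$ gives
\[
I \;\subseteq\; \bigcap_{\mathfrak{p} \in \ass(S/J)} (I_\mathfrak{p} \cap S) \;=\; \bigcap_{\mathfrak{p} \in \ass(S/J)} (J_\mathfrak{p} \cap S) \;=\; J,
\]
which together with $J \subseteq I$ gives $I = J$.

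There is no serious obstacle here; the only mild subtlety is step one, namely checking that the two associativity sums are indexed by the \emph{same} set of primes. This is where the unmixedness of $J$ and the assumption $\h(I) = \h(J)$ both come into play, and without either hypothesis the term-by-term comparison would fail.
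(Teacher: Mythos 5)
Your proof is correct and follows exactly the route the paper intends: the lemma is stated as a consequence of the associativity formula (Proposition~\ref{Ass}), and your term-by-term comparison of the two associativity sums, followed by localization at each $\mathfrak{p}\in\ass(S/J)$ and reassembly via the primary decomposition $J=\bigcap_{\mathfrak{p}}(J_\mathfrak{p}\cap S)$, is precisely the standard argument behind that remark. No gaps; the identification of the two index sets and the use of unmixedness in the final globalization step are both handled correctly.
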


Engheta's finite characterization of height $2$ unmixed ideals of multiplicity $2$ gives a bound on projective dimension in this case.  
\begin{prop}[{\cite[Proposition 11]{En}}]\label{h2e2} Let $J$ be an unmixed ideal in a polynomial ring $S$ over an algebraically closed field.  Suppose ${\rm ht}(J) = 2$ and $e(S/J) = 2$.  Then $\pd(S/J) \le 3$.

In particular, if $J$ is primary to $(x,y)$ for independent linear forms $x, y$, then
\begin{enumerate}
\item $J = (x, y^2)$ or
\item $J = (x^2, xy, y^2 , ax + by)$, where ${\rm ht}(x, y, a, b) = 4$.
\end{enumerate}
\end{prop}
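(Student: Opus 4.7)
My strategy is to split into cases by the associated primes of $J$ via the associativity formula (Proposition~\ref{Ass}). Writing $e(S/J) = \sum_i e(S/\p_i)\lambda(S_{\p_i}/J_{\p_i}) = 2$, where the sum runs over the height-$2$ primes containing $J$ (all associated to $S/J$ by unmixedness), there are exactly three cases: (a) a single prime $\p$ with $e(S/\p) = 2$ and $\lambda = 1$, forcing $J = \p$; (b) a single prime $\p$ of multiplicity $1$ with $\lambda(S_\p/J_\p) = 2$, which is precisely the situation of the ``in particular'' claim since over the algebraically closed field $k$ any such $\p$ is generated by two independent linear forms $x, y$; or (c) two distinct primes $\p_1, \p_2$, each of multiplicity $1$ and local length $1$, so $J = \p_1 \cap \p_2$ with both $\p_i$ generated by two linear forms.

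Cases (a) and (c) yield the pd bound quickly. In case (a), $V(\p) \subseteq \P^{n-1}$ is an irreducible projective variety of codimension $2$ and degree $2$; the classical minimum-degree bound $\deg V \ge 1 + \mathrm{codim}(V, \langle V\rangle)$ forces $V$ to be a quadric hypersurface in a linear span of codimension $1$, so $\p = (\ell, q)$ is a complete intersection of a linear and a quadratic form, giving $\pd(S/\p) = 2$. In case (c), the short exact sequence $0 \to S/J \to S/\p_1 \oplus S/\p_2 \to S/(\p_1 + \p_2) \to 0$, combined with $\pd(S/\p_i) = 2$ and the fact that $\p_1 + \p_2$ is a complete intersection of at most four linear forms of height $\le 4$ (so $\pd \le 4$), yields $\pd(S/J) \le \max\{2, 4 - 1\} = 3$ via the standard pd estimate for short exact sequences.

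The substance of the proposition lies in case (b), where I may take $\p = \mm = (x, y)$. Since $\ass(S/J) = \{\mm\}$, elements of $S \setminus \mm$ are nonzerodivisors on $S/J$; combined with $\lambda(S_\mm/J_\mm) = 2$ this gives $\mm^2 \subseteq J \subseteq \mm$. If $J$ contains a linear form $\ell$, then $\ell \in kx + ky$, and after a $\mathrm{GL}_2$-change of basis in $\{x, y\}$ I may assume $x \in J$. The image of $J$ in the DVR $R/(x)R$ (where $R = S_\mm$) is then a colength-$2$ ideal, namely $(\bar y^2)$, so $J_\mm = (x, y^2)_\mm$; by uniqueness of primary decomposition for $\mm$-primary ideals this lifts to $J = (x, y^2)$, a complete intersection with $\pd(S/J) = 2$. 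This is form~(1).

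Otherwise $J$ contains no linear form. Set $T := S/\mm \cong k[x_3, \ldots, x_n]$; then $\mm/\mm^2 \cong T(-1)^2$ is free of rank $2$ over $T$, and $J/\mm^2 \subseteq \mm/\mm^2$ is a graded $T$-submodule of rank $1$. The key step is to show that the $T$-module $\mm/J$ is torsion-free: if $\tilde t \in S \setminus \mm$ lifts $0 \ne t \in T$ and $\tilde m \in \mm$ satisfies $\tilde t \tilde m \in J$, then since $\tilde t$ is a nonzerodivisor on $S/J$ we conclude $\tilde m \in J$. Torsion-freeness of $\mm/J$ means $J/\mm^2$ is saturated in $T(-1)^2$, and since $T$ is a UFD any saturated rank-$1$ graded submodule of $T(-1)^2$ is cyclic, generated by a pair $(\bar a, \bar b) \in T^2$ with $\gcd(\bar a, \bar b) = 1$. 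Lifting $\bar a, \bar b$ to $a, b \in S$ produces $J = \mm^2 + (ax + by) = (x^2, xy, y^2, ax + by)$, and the coprimality gives $\h(x, y, a, b) = 2 + \h_T(\bar a, \bar b) = 4$, which is form~(2). For the pd bound, a mapping cone of multiplication by $ax + by$ on the length-$2$ resolution of $S/\mm^2$ yields a (possibly non-minimal) free resolution of $S/J$ of length $3$, so $\pd(S/J) \le 3$. The principal obstacle is the cyclicity argument for $J/\mm^2$: it depends essentially on the torsion-freeness of $\mm/J$ (which encodes the absence of embedded primes) together with the UFD property of the polynomial ring $T$.
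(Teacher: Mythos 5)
Your proof is correct. Note that the paper does not prove this statement at all --- it is quoted from Engheta \cite{En}, Proposition 11 --- and your argument (splitting by the associativity formula into a multiplicity-two prime, an intersection of two linear primes, and the $(x,y)$-primary case, then using $\p^2 \subseteq J \subseteq \p$ and the saturated rank-one submodule of $\p/\p^2 \cong T(-1)^2$ to extract the generator $ax+by$ with $\gcd(\bar a,\bar b)=1$) is essentially the standard argument from that source.
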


\noindent In \cite{HMMS2} it was shown that such a complete characterization is very special and cannot be extended to higher heights or multiplicities.  In fact, for any multiplicity $e \ge 2$ and any height $h \ge 2$ with $(e,h) \neq (2,2)$ there are infinitely many nonisomorphic primary ideals of multiplicity $e$ and height $h$.  Moreover, they can be chosen to have arbitrarily large projective dimension.  However, in Sections~\ref{primary} and \ref{22struct} we give a characterization of ideals primary to a height three linear prime ideal or to a height three multiplicity two prime that is sufficient for the bounds we want.

\subsection{Linkage}

Two ideals $J$ and $K$ in a regular ring $R$ are said to be {\it linked}, denoted $J\sim K$, if there exists a regular sequence $\underline{\alpha}=\alpha_1,\ldots,\alpha_g$ such that $K=(\underline{\alpha}):J$ and $J=(\underline{\alpha}):K$.  Notice that the definition forces $J$ and $K$ to be unmixed, and $(\underline{\alpha}) \subseteq J\cap K$.
Linkage has been studied since the nineteenth century, although its first modern treatment appeared in the ground-breaking paper by Peskine and Szpiro \cite{PSz}. We refer the interested reader to \cite{M} and \cite{HU} and their references.

We will need a few results regarding linkage.  

\begin{thm}$(${\rm Peskine-Szpiro}, \cite{PSz}$)$\label{linkage}
Let $J$ be an unmixed ideal of $S$ of height $g$. Let $\underline{\alpha}=\alpha_1,\ldots,\alpha_g$ be a regular sequence in $J$, and set $K=(\underline{\alpha}):J$. Then, one has
\begin{enumerate}
\item  $J=(\underline{\alpha}):K$, that is, $J\sim K$ via $\underline{\alpha};$
\item $S/J$ is Cohen-Macaulay if and only if $S/K$ is Cohen-Macaulay$;$
\item $e(S/J)+e(S/K)=e(S/(\underline{\alpha})).$
\end{enumerate}
\end{thm}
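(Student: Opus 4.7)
The plan is to reduce all three statements to a single key identification: setting $R = S/(\underline{\alpha})$, the quotient $K/(\underline{\alpha})$ is canonically isomorphic to $\hom_R(S/J, R)$ as an ideal of $R$, via $\hom_R(R/\bar{J}, R) \cong \ann_R(\bar{J})$. Since $S$ is regular and $\underline{\alpha}$ is a regular sequence, $R$ is Gorenstein of dimension $\dim S - g$, and the duality $M \mapsto \hom_R(M, R)$ will be the main tool.

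For part (1), write $L = (\underline{\alpha}):K$. The inclusion $J \subseteq L$ is immediate from the definition of $K$. For the reverse containment, I would argue locally: since $J$ is unmixed of height $g$, it suffices to show $L_{\mathfrak{p}} = J_{\mathfrak{p}}$ for every $\mathfrak{p} \in \ass(S/J)$. Such a prime has height $g$ and contains $(\underline{\alpha})$, so $R_{\mathfrak{p}}$ is a zero-dimensional Gorenstein local ring. In this setting Matlis duality yields the annihilator involution $\ann_{R_{\mathfrak{p}}}(\ann_{R_{\mathfrak{p}}}(I)) = I$ for every ideal $I$ of $R_{\mathfrak{p}}$; applied to $I = J_{\mathfrak{p}}/(\underline{\alpha})$ this gives precisely $L_{\mathfrak{p}} = J_{\mathfrak{p}}$.

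For part (3), the short exact sequence $0 \to K/(\underline{\alpha}) \to S/(\underline{\alpha}) \to S/K \to 0$ combined with additivity of multiplicity on modules of the same dimension gives $e(S/(\underline{\alpha})) = e(K/(\underline{\alpha})) + e(S/K)$. To finish, I would show $e(K/(\underline{\alpha})) = e(S/J)$ by expanding both via the associativity formula (Proposition~\ref{Ass}): the only contributing primes on each side are the height-$g$ minimal primes of $J$, and at such a prime $\mathfrak{p}$ one has $(K/(\underline{\alpha}))_{\mathfrak{p}} \cong \hom_{R_{\mathfrak{p}}}(S_{\mathfrak{p}}/J_{\mathfrak{p}}, R_{\mathfrak{p}})$, which has the same length as $S_{\mathfrak{p}}/J_{\mathfrak{p}}$ by Matlis duality in the Artinian Gorenstein ring $R_{\mathfrak{p}}$.

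Part (2) is where I expect the real difficulty to lie. I would follow Peskine-Szpiro's mapping cone construction: lift the surjection $S/(\underline{\alpha}) \twoheadrightarrow S/J$ to a chain map from the Koszul complex on $\underline{\alpha}$ to a minimal free resolution of $S/J$, apply $\hom_S(-, S)$, and verify that the mapping cone of the dualized comparison map is a free resolution of $S/K$ of length $\max(g, \pd(S/J))$. The Auslander--Buchsbaum formula would then translate the Cohen--Macaulay property between $S/J$ and $S/K$. The technical heart of the argument is checking that this dualized mapping cone really is acyclic: this rests on the Gorenstein symmetry produced in part (1), ensuring that the cokernel arising from the dual of the resolution of $S/J$ is genuinely $S/K$ and that no spurious homology appears. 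Identifying $\ext^g_S(S/J,S)$ with the canonical module of $S/J$ (via the Gorenstein structure of $R$) is the main input that makes this work, and verifying its compatibility with $K$ is the step I would spend the most care on.
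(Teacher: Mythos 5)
The paper does not prove this theorem: it is quoted as a classical result with a citation to Peskine--Szpiro, so there is no in-paper argument to compare against. Your proposal is essentially the standard proof of that classical result, and I find no genuine gap. Parts (1) and (3) are complete as written: the reduction to associated primes of $S/J$ is legitimate because $\bigl((\underline{\alpha}):K\bigr)/J$ embeds in $S/J$, unmixedness guarantees each such prime $\mathfrak{p}$ has height $g$ so that $R_{\mathfrak{p}}$ is an Artinian complete intersection, and the double-annihilator and length-preservation properties of Matlis duality over an Artinian Gorenstein local ring do the rest (colons commute with localization since $J$ is finitely generated). For (2) your sketch is the right one, and the details you defer are standard: for the forward implication one may assume $\pd(S/J)=g$, so $\hom_S(F_\bullet,S)$ is acyclic away from the end because $\ext^i_S(S/J,S)=0$ for $i<g$; the induced map $\ext^g_S(S/J,S)\to \ext^g_S(S/(\underline{\alpha}),S)=S/(\underline{\alpha})$ is injective with image $K/(\underline{\alpha})$ by exactly the identification you set up for (1); hence the dualized mapping cone resolves $S/K$, and after cancelling the split summand $S\to S$ coming from degree $0$ it has length $g$, so $S/K$ is Cohen--Macaulay. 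The converse then follows by exchanging the roles of $J$ and $K$, which part (1) licenses.
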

It is easily checked that if $\underline{\alpha}$ is a regular sequence of maximal length in an ideal $J$, then $(\underline{\alpha}):J=(\underline{\alpha}):J^{un}$, that is, $J^{un}$ is linked to $(\underline{\alpha}):J$. We will use this fact several times.

The following facts are well-known. 

\begin{lem}[{cf. \cite[Lemma 2.6]{EnTh}}] \label{doublelink}
Suppose $J$ and $L$ are unmixed ideals both linked to the same ideal.  Then $\pd(S/J) = \pd(S/L)$.
\end{lem}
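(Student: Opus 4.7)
The plan is to prove that $\pd_S(S/J)$ depends only on the ideal $K$ to which $J$ is linked, not on the regular sequence witnessing the linkage; applied to both $J$ and $L$, this gives the desired equality. The main obstacle is a structural identification expressing $J/(\underline{\alpha})$ as the canonical module of $S/K$ (up to a degree shift), after which the claim follows from a routine Tor computation.

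Write $J=(\underline{\alpha}):K$ with $\underline{\alpha}=\alpha_1,\ldots,\alpha_g$ a regular sequence in $K$ of length $g=\h(J)=\h(K)$. The evaluation map $\phi\mapsto\phi(1)$ yields
\[
\hom_S(S/K,\,S/(\underline{\alpha}))\;\cong\;((\underline{\alpha}):K)/(\underline{\alpha})\;=\;J/(\underline{\alpha}).
\]
Since $\underline{\alpha}$ is a regular sequence in $S$, the quotient $R:=S/(\underline{\alpha})$ is a complete intersection, hence Gorenstein, of dimension $n-g$ with $\omega_R\cong R$ up to shift. Because $K$ is unmixed of height $g$, $\dim(S/K)=n-g=\dim R$, so that $\omega_{S/K}\cong \hom_R(S/K,\omega_R)$ matches $\hom_S(S/K,S/(\underline{\alpha}))$ up to a shift. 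Thus $J/(\underline{\alpha})\cong \omega_{S/K}$ up to a shift, and in particular $\pd_S(J/(\underline{\alpha}))=\pd_S(\omega_{S/K})$, which depends on $K$ alone.

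Finally, from the short exact sequence $0\to J/(\underline{\alpha})\to S/(\underline{\alpha})\to S/J\to 0$ and the fact $\pd_S(S/(\underline{\alpha}))=g$ (Koszul complex), the long exact sequence of $\tor^S_*(-,k)$ gives $\tor^S_{i+1}(S/J,k)\cong \tor^S_i(J/(\underline{\alpha}),k)$ for every $i\geq g+1$. If $S/K$ is Cohen--Macaulay, then Theorem~\ref{linkage}(2) implies that $S/J$ and $S/L$ are both Cohen--Macaulay, so $\pd_S(S/J)=\pd_S(S/L)=g$. Otherwise $\omega_{S/K}$ is not Cohen--Macaulay, whence $\pd_S(\omega_{S/K})>g$, and the Tor isomorphism forces $\pd_S(S/J)=\pd_S(\omega_{S/K})+1$; the identical formula applies to $L$. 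In either case $\pd_S(S/J)=\pd_S(S/L)$, as desired.
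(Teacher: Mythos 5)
Your overall strategy is sound and is essentially the standard argument (the paper itself only cites Engheta's thesis for this lemma, so there is no in-text proof to compare against): identify $J/(\underline{\alpha})$ with a module intrinsic to $K$, then transfer projective dimension across the Koszul short exact sequence. The identification $J/(\underline{\alpha})\cong\hom_S(S/K,S/(\underline{\alpha}))\cong\omega_{S/K}$ up to shift is correct (its independence of $\underline{\alpha}$ follows from graded local duality, or equivalently from the isomorphism with $\ext_S^g(S/K,S)$ that the paper itself uses in Proposition~\ref{messy}), and so is the Tor computation giving $\tor^S_{i+1}(S/J,k)\cong\tor^S_i(J/(\underline{\alpha}),k)$ for $i\ge g+1$.

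There is, however, a genuine gap in the final case analysis: the assertion that $S/K$ not Cohen--Macaulay implies $\omega_{S/K}$ not Cohen--Macaulay is false. The canonical module always satisfies Serre's condition $S_2$; for instance, with $K=(x,y)\cap(u,v)\subseteq k[x,y,u,v]$ (height $2$, unmixed, $\depth(S/K)=1$) one has $\omega_{S/K}\cong S/(x,y)\oplus S/(u,v)$ up to shifts, a maximal Cohen--Macaulay $S/K$-module, so $\pd_S(\omega_{S/K})=g$ even though $S/K$ is not Cohen--Macaulay. In this boundary case your Tor isomorphism (valid only for $i\ge g+1$) yields just $\pd_S(S/J)\le g+1$ and does not by itself exclude $\pd_S(S/J)=g$. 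The repair is immediate: if $\pd_S(S/J)=g$ then $S/J$ is Cohen--Macaulay by Auslander--Buchsbaum, hence so is $S/K$ by Theorem~\ref{linkage}(2), a contradiction; therefore $\pd_S(S/J)=g+1=\pd_S(\omega_{S/K})+1$, and identically for $L$. With this extra subcase inserted, your proof is complete.
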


\begin{lem}[{cf. \cite[Theorem 7]{En}}]\label{ses}
Let $J$ be an almost complete intersection ideal of $S$. If $K$ is any ideal linked to $J^{un}$, then one has $${\rm pd}(S/J)\leq {\rm pd}(S/K)+1.$$
\end{lem}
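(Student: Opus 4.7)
The plan is to pass from $J$ to a specific ideal in the linkage class of $J^{un}$ and then exploit the short exact sequence associated with an almost complete intersection. Since $J$ has height $h$ and is minimally generated by $h+1$ elements, I begin by picking a regular sequence $\underline{\alpha}=\alpha_1,\dots,\alpha_h$ contained in $J$ (taking, if necessary, general linear combinations of the minimal generators) so that $J=(\underline{\alpha},f)$ for a single additional element $f$. Set $K_0:=(\underline{\alpha}):J$. Because $\underline{\alpha}$ is a maximal regular sequence in $J$, the remark following Theorem~\ref{linkage} yields $K_0=(\underline{\alpha}):J^{un}$, so $K_0$ is directly linked to $J^{un}$ via $\underline{\alpha}$. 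Any ideal $K$ linked to $J^{un}$ then satisfies $\pd(S/K)=\pd(S/K_0)$ by Lemma~\ref{doublelink}, so it suffices to prove the inequality with $K$ replaced by $K_0$.

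The key observation is that $J/(\underline{\alpha})$ is the cyclic $S$-module generated by $\overline{f}$, and the surjection $S\onto J/(\underline{\alpha})$ sending $1\mapsto \overline{f}$ has kernel
$$(\underline{\alpha}):f\;=\;(\underline{\alpha}):(\underline{\alpha},f)\;=\;(\underline{\alpha}):J\;=\;K_0,$$
so $J/(\underline{\alpha})\iso S/K_0$. Plugging this into the short exact sequence
$$0\lto S/K_0\lto S/(\underline{\alpha})\lto S/J\lto 0$$
and invoking the standard bound on projective dimension from a short exact sequence yields $\pd(S/J)\le \max\{\pd(S/K_0)+1,\,\pd(S/(\underline{\alpha}))\}=\max\{\pd(S/K_0)+1,\,h\}$, using that $\underline{\alpha}$ is a regular sequence of length $h$.

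To finish, observe that $K_0$ is unmixed of height $h$ in the polynomial ring $S$, so $\depth(S/K_0)\le \dim(S/K_0)=\dim S-h$, and the Auslander--Buchsbaum formula forces $\pd(S/K_0)\ge h$. Consequently the maximum collapses to $\pd(S/K_0)+1$, giving $\pd(S/J)\le \pd(S/K)+1$ as desired. There is no real obstacle here; the only step requiring mild attention is the choice of $\underline{\alpha}$ as a length-$h$ regular sequence with $J=(\underline{\alpha},f)$, which is immediate from the definition of an almost complete intersection after possibly replacing the given generators by suitable general linear combinations.
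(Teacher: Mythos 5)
Your proof is correct and is essentially the standard argument: the paper states this lemma without proof, citing Engheta, and your route — reduce to the direct link $K_0=(\underline{\alpha}):J=(\underline{\alpha}):f$ via Lemma~\ref{doublelink}, use the short exact sequence $0\to S/K_0\to S/(\underline{\alpha})\to S/J\to 0$, and kill the term $\pd(S/(\underline{\alpha}))=h$ by noting $\pd(S/K_0)\ge h$ from Auslander--Buchsbaum — is exactly the cited proof and also the first step of the paper's own Proposition~\ref{messy}. The only point deserving a word of care is the existence of a length-$h$ regular sequence $\underline{\alpha}\subseteq J$ with $J=(\underline{\alpha},f)$, which you correctly flag and which is standard (and automatic in this paper, where the generators are all quadrics).
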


\subsection{Height three primes of small multiplicity}

Here we recall some of the structure theorems regarding prime ideals of small height and multiplicity.  For most of this section we need the field $k$ to be algebraically closed. The reduction to this case occurs in the proof of Theorem~\ref{pd6}.  Also, recall that a homogeneous ideal $J$ is called {\it degenerate} if $J$ contains at least one linear form, otherwise $J$ is said to be {\it non-degenerate}.  

The following result is an ideal-theoretic version of a classical result of Samuel and Nagata. 

\begin{thm}[ Samuel \cite{Sa} , Nagata {\cite[Theorem~40.6]{Na}}]\label{SN}
Let $J$ be a homogeneous, unmixed ideal of S. If $e(S/J) = 1$, then $J$ is generated by $\h(J)$ linear forms.
\end{thm}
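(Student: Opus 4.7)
The plan is to reduce to the case where $J$ is prime via the associativity formula, then to use projective geometry of degree-one varieties. First, applying Proposition~\ref{Ass} to the unmixed ideal $J$ gives
\[
1 = e(S/J) = \sum_{\p \in \ass(S/J)} e(S/\p)\, \lambda(S_\p/J_\p).
\]
Each summand is a positive integer, so there is exactly one associated prime $\p$ (necessarily of height $\h(J)$ by unmixedness) with $e(S/\p) = 1 = \lambda(S_\p/J_\p)$. In particular $J \subseteq \p$, $\h(J) = \h(\p)$, and $e(S/J) = e(S/\p)$, so Lemma~\ref{unmixedequal} yields $J = \p$.

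Next, I would show that $X := V(\p) \subseteq \P^{n-1}$ is a linear subspace, so that $\p$ is generated by linear forms. Since $X$ is irreducible of degree $e(S/\p) = 1$, taking any two distinct points $x, y \in X$, the line $\overline{xy}$ meets $X$ in a zero-dimensional scheme of length at most $\deg X = 1$ unless it is entirely contained in $X$; since $\overline{xy} \cap X$ contains both $x$ and $y$, the line must lie in $X$. Hence $X$ is closed under joining pairs of its points, so $X$ is a linear subspace of $\P^{n-1}$. Equivalently, one can invoke the classical lower bound $\deg Y \geq \dim\langle Y\rangle - \dim Y + 1$ for a non-degenerate irreducible subvariety $Y$, applied to $X$ inside its linear span, which forces $\langle X\rangle = X$.

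Finally, since $X \subseteq \P^{n-1}$ is a linear subspace of codimension $h := \h(\p)$, the defining ideal $\p$ is generated by $h$ linearly independent linear forms $\ell_1, \ldots, \ell_h \in S_1$. Completing them to a basis $\ell_1, \ldots, \ell_n$ of $S_1$ produces a homogeneous regular system of parameters of $S$ whose first $h$ elements generate $J = \p$, as desired. The main obstacle is the geometric step showing that a degree-one irreducible projective variety is linear; in our situation, however, the elementary ``two points determine a line'' argument sketched above sidesteps any appeal to the general minimal-degree theorem, so no deeper technical input is required.
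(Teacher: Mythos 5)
Your argument is correct in substance, but be aware that the paper does not prove Theorem~\ref{SN} at all: it is quoted as a black box from Samuel and Nagata, whose underlying result is the local statement that an unmixed local ring of multiplicity one is regular. So your proposal differs from the paper simply by supplying a self-contained proof of the graded version. Your reduction to the prime case is exactly right: the associativity formula (Proposition~\ref{Ass}) forces a single associated prime $\mathfrak{p}$ with $e(S/\mathfrak{p})=\lambda(S_\mathfrak{p}/J_\mathfrak{p})=1$, and Lemma~\ref{unmixedequal} gives $J=\mathfrak{p}$. For the second half, the cleanest route inside this paper is not the ``two points span a line'' argument but an induction on Proposition~\ref{non-deg}: if $e(S/\mathfrak{p})=1$ and $\h(\mathfrak{p})\ge 1$, then $\mathfrak{p}$ must be degenerate, so it contains a linear form $\ell$; passing to $S/(\ell)$ preserves the multiplicity and drops the height by one, and induction shows $\mathfrak{p}$ is generated by $\h(\mathfrak{p})$ independent linear forms (the case $\mathfrak{p}=(x_1,\dots,x_n)$ being trivial). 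What your proof buys is transparency in the graded setting; what the citation buys the authors is validity in the general local setting.

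Two caveats on your geometric step. First, the claim that an arbitrary (not general) line not contained in $X$ meets $X$ in a scheme of length at most $\deg X$ is, in codimension greater than one, the refined Bezout inequality --- essentially the same depth of input as the minimal-degree bound you say you are sidestepping; so you are not really avoiding Proposition~\ref{non-deg}, only repackaging it. Second, both versions of the geometric argument require $k$ algebraically closed (so that $X$ has enough rational points and the degree bound applies); this is the standing hypothesis in this section of the paper, with the reduction to $\bar{k}$ deferred to the proof of Theorem~\ref{pd6}, but it should be stated explicitly, since the theorem as cited from Nagata does not need it.
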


The next result we need is a classical simple lower bound for the multiplicity of non-degenerate prime ideals.

\begin{prop}[{\cite[Corollary~18.12]{Ha}}]\label{non-deg}
Let $\mathfrak{p}$ be a homogeneous prime ideal of $S$. If $\mathfrak{p}$ is a non-degenerate prime ideal, then $e(S/\mathfrak{p})\geq{\rm ht}(\mathfrak{p})+1$.
\end{prop}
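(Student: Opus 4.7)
The plan is to reduce to the algebraically closed case (both $e(S/\p)$ and the non-degeneracy of $\p$ are preserved under the faithfully flat extension $S \to S \tensor_k \bar{k}$) and then argue geometrically by induction on the dimension $d := n-1-\h(\p)$ of the irreducible projective variety $X := V(\p) \subseteq \P^{n-1}$. Non-degeneracy of $\p$ translates into $X$ not being contained in any hyperplane of $\P^{n-1}$, and the target inequality becomes $\deg X \geq \h(\p)+1$, recalling that $\deg X = e(S/\p)$.

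The base case $d = 0$ is immediate: $X$ is a single closed point of degree $1$, and non-degeneracy forces the ambient space to be $\P^0$, so $n = 1$, $\h(\p) = 0$, and $e(S/\p) = 1 = \h(\p)+1$.

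For the inductive step $d \geq 1$, I would pick a general linear form $\ell \in S_1$. Since $\p$ is prime and non-degenerate, $\ell \notin \p$, hence $\ell$ is a non-zerodivisor on $S/\p$; for generic $\ell$ it is moreover superficial. In the polynomial ring $\bar S := S/(\ell) \iso k[y_1,\ldots,y_{n-1}]$ the image $\bar\p$ of $\p$ is unmixed of height $h := \h(\p)$ with $e(\bar S/\bar\p) = e(S/\p)$. The associativity formula (Proposition~\ref{Ass}) applied to the minimal primes $\q_1,\ldots,\q_t$ of $\bar\p$ in $\bar S$ then yields
\[
e(S/\p) \;=\; e(\bar S/\bar\p) \;\geq\; \sum_{i=1}^{t} e(\bar S/\q_i).
\]
Each $V(\q_i) \subseteq \P^{n-2}$ has dimension $d-1$ and codimension $h$, so once the $\q_i$ are known to be non-degenerate in $\bar S$, the inductive hypothesis gives $e(\bar S/\q_i) \geq h+1$, completing the induction.

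The principal obstacle is the claim that, for a generic linear form $\ell$, every minimal prime of $\bar\p$ is non-degenerate in $\bar S$. When $d \geq 2$, Bertini's irreducibility theorem forces $\bar\p$ to have a single minimal prime $\q$, and a short projection argument shows that if $V(\q)$ lay in a hyperplane of $\P^{n-2}$ then $X$ itself would lie in a proper linear subspace of $\P^{n-1}$, contradicting non-degeneracy. The genuinely delicate case is $d = 1$, where $X$ is a non-degenerate irreducible curve and $X \cap H$ is a zero-dimensional (generally reducible) scheme; here one must appeal to the classical general position theorem to conclude that the points of $X \cap H$ span $H$ for generic $H$. This last step is precisely the geometric content behind \cite[Corollary~18.12]{Ha}.
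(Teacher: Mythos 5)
The paper offers no proof of this proposition: it is stated as a citation to Harris \cite[Corollary~18.12]{Ha}, and what you have written is essentially a reconstruction of the standard proof of that cited result (general hyperplane sections, induction on dimension, general position for the curve case). So in that sense your argument is correct and is ``the'' intended proof. Two caveats are worth recording. First, your opening reduction is shakier than you suggest: primality of $\mathfrak{p}$ need not survive the base change $S \to S\otimes_k \bar k$ (e.g.\ $(x^2+y^2)$ over $\mathbb{R}$ splits into two degenerate linear primes), so one cannot simply apply the algebraically closed case to $\mathfrak{p}\bar S$. Fortunately this step is unnecessary here, since the paper already assumes $k=\bar k$ throughout the section where the proposition is used. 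Second, your appeal to the full general position theorem in the curve case is both heavier than needed and characteristic-sensitive: the monodromy proof of uniform position is a characteristic-$0$ argument, and the strong form can fail in positive characteristic. All that is needed is that a nondegenerate irreducible curve $C\subseteq\P^{m}$ has $\deg C\ge m$, and this follows from an elementary span argument valid in every characteristic: choose $d+1$ distinct points of $C$ (where $d=\deg C$); their linear span has dimension at most $d$, so if $d\le m-1$ they lie in a hyperplane $H$, which cannot contain the nondegenerate $C$, whence $C\cap H$ is a finite scheme of length $d$ containing $d+1$ points --- a contradiction. Substituting this for the general position theorem makes your induction airtight over any algebraically closed field.
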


The following result is a consequence of Proposition~\ref{non-deg}.

\begin{cor}\label{e=2}
Let $\mathfrak{p}$ be a homogeneous prime ideal of $S$ of height three. If $e(S/\mathfrak{p})=2$, then there exist linear forms $x,y$ and a quadric $q$ such that $\mathfrak{p}=(x,y,q)$.
\end{cor}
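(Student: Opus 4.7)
The plan is to use Proposition~\ref{non-deg} iteratively to peel off linear forms until what remains is a height-one homogeneous prime in a polynomial ring, which must be principal.

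First, I would observe that $\mathfrak{p}$ must be degenerate. Indeed, Proposition~\ref{non-deg} says a non-degenerate height-three homogeneous prime has multiplicity at least $4$, contradicting $e(S/\mathfrak{p})=2$. Hence $\mathfrak{p}$ contains some linear form $x$. Passing to $\bar S = S/(x)$, which is again a polynomial ring (in one fewer variable), the image $\bar{\mathfrak{p}} = \mathfrak{p}/(x)$ is a homogeneous prime of height $2$ with $e(\bar S/\bar{\mathfrak{p}}) = e(S/\mathfrak{p}) = 2$.

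Next, I would apply Proposition~\ref{non-deg} again to $\bar{\mathfrak{p}}$: if it were non-degenerate, its multiplicity would be at least $3$, which is impossible. So $\bar{\mathfrak{p}}$ contains a linear form $\bar y$; lifting to $S$ gives a linear form $y$ with $(x,y) \subseteq \mathfrak{p}$, and the images of $x,y$ are linearly independent since $\h(\mathfrak{p})=3>1$. Now modding out by both, $\mathfrak{p}/(x,y)$ is a height-one homogeneous prime in the polynomial ring $S/(x,y)$. Since a polynomial ring is a UFD, every height-one prime is principal, generated by an irreducible homogeneous polynomial $\bar q$; the multiplicity of the quotient by $(\bar q)$ equals $\deg(\bar q)$, so $\deg(\bar q) = 2$. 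Lifting $\bar q$ to a quadric $q \in S$, we obtain $\mathfrak{p} = (x,y,q)$, as required.

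The only real subtlety is checking that one can legitimately reduce modulo the linear forms and still invoke Proposition~\ref{non-deg} on the resulting prime; this is immediate because the quotient of a polynomial ring by a linear form is again a polynomial ring, and multiplicity and height both drop correctly. No serious obstacle is anticipated, since the argument is a straightforward two-step induction on height using only the lower bound for non-degenerate primes.
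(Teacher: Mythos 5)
Your argument is correct and is essentially the proof the paper intends: the paper simply asserts Corollary~\ref{e=2} as a consequence of Proposition~\ref{non-deg}, and your two-step reduction (degenerate at height $3$ since $e\ge 4$ would be forced, then again at height $2$ since $e\ge 3$ would be forced, then a principal height-one prime of degree $2$ in the quotient UFD) is the standard way to carry that out.
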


The next two results are classification theorems of varieties of small degree over an algebraically closed field. We give an equation-oriented version of these results that best fits our purpose. The first of these results follows from a classical classification theorem for varieties of multiplicity $3$.  (See \cite{S}.)  By Proposition~\ref{non-deg} all such primes are degenerate.

\begin{thm}[{\rm Anonymous \cite{X}, Swinnerton-Dyer \cite[Theorem~3]{S}}]\label{e=3}
Let $\mathfrak{p}$ be a homogeneous prime ideal of $S$ of height three. If $e(S/\mathfrak{p})=3$, then $\mathfrak{p}$ is  of one of the following types:
\begin{enumerate}
\item $\mathfrak{p}=(x,y,c)$ where $x$ and $y$ are linear forms and $c$ is a cubic form;
\item $\mathfrak{p}=(x) + I_2(\boldsymbol{M})$, where $\boldsymbol{M}$ is a $2 \times 3$ matrix of linear forms and $x$ is a linear form that is necessarily a nonzerodivisor on $I_2(\boldsymbol{M})$.  
\end{enumerate}
\end{thm}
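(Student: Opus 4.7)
The plan is to reduce this classification to the classical theory of projective varieties of minimal degree, via a pair of hyperplane sections. By Proposition~\ref{non-deg}, a non-degenerate homogeneous prime of height three has multiplicity at least four; since $e(S/\mathfrak{p})=3$, the prime $\mathfrak{p}$ must therefore contain a linear form $x$. Passing to $\bar{S}=S/(x)$ and $\bar{\mathfrak{p}}=\mathfrak{p}/(x)$, we obtain a homogeneous prime of height two and multiplicity three in a polynomial ring, and we are reduced to classifying such $\bar{\mathfrak{p}}$.

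I would split on whether $\bar{\mathfrak{p}}$ is degenerate in $\bar{S}$. If $\bar{\mathfrak{p}}$ contains a linear form $\bar{y}$, then a further quotient by $y$ produces a height-one homogeneous prime of multiplicity three in a polynomial ring; such an ideal is principal, generated by an irreducible cubic form. Lifting back yields $\mathfrak{p}=(x,y,c)$, which is case (1). If instead $\bar{\mathfrak{p}}$ is non-degenerate, then Proposition~\ref{non-deg} applied in $\bar{S}$ gives $e(\bar{S}/\bar{\mathfrak{p}})\geq \mathrm{ht}(\bar{\mathfrak{p}})+1=3$, with equality. Thus $\bar{\mathfrak{p}}$ cuts out a non-degenerate projective variety of minimal degree (degree equal to codimension plus one) in codimension two. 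The classical Del Pezzo--Bertini classification of such varieties, in this low-degree range due to Swinnerton-Dyer \cite[Theorem~3]{S}, identifies these as (cones over) rational normal scrolls, whose homogeneous ideal is generated by the $2\times 2$ minors of a $2\times 3$ matrix $\bar{\boldsymbol{M}}$ of linear forms in $\bar{S}$. Lifting the entries of $\bar{\boldsymbol{M}}$ to $S$ gives $\mathfrak{p}=(x)+I_2(\boldsymbol{M})$, case (2).

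It remains to verify that $x$ is a nonzerodivisor on $S/I_2(\boldsymbol{M})$. Standard bounds on determinantal ideals force $\mathrm{ht}(I_2(\boldsymbol{M}))\leq 2$, while the identity $I_2(\boldsymbol{M})+(x)=\mathfrak{p}$ together with Krull's principal ideal theorem forces $\mathrm{ht}(I_2(\boldsymbol{M}))\geq 2$, so $I_2(\boldsymbol{M})$ has height exactly two and is Cohen--Macaulay by Hilbert--Burch. Its associated primes therefore all have height two, and any such prime containing $x$ would contain $I_2(\boldsymbol{M})+(x)=\mathfrak{p}$, forcing its height to be at least three, a contradiction. The main obstacle in this plan is clearly the invocation of the classification of minimal-degree varieties; beyond that, the proof is an elementary iterated hyperplane-section argument driven by Proposition~\ref{non-deg}, with case (1) handled by a routine principal-generator observation.
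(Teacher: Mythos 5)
Your argument is correct and matches the paper's (implicit) treatment: the theorem is stated there without proof as a citation to the classical classification of degree-three varieties, and the paper's one remark---that Proposition~\ref{non-deg} forces such primes to be degenerate---is exactly your opening reduction before invoking Swinnerton-Dyer. Your added verifications (the iterated hyperplane section yielding case (1), and the unmixedness of $I_2(\boldsymbol{M})$ via Hilbert--Burch to show $x$ is a nonzerodivisor) are sound and simply fill in routine details the paper delegates to the references.
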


The next result is a classification theorem for varieties of multiplicity $4$.  The non-degenerate case was done by Swinnerton-Dyer \cite[Theorem 1]{S}; the degenerate cases are derived from work of Brodmann-Schenzel \cite[Theorem~2.1]{BS}). \begin{thm}\label{e=4}
Let $p$ a homogeneous prime ideal of $S$ of height $3$. If $e(S/\mathfrak{p})=4$, then 
$\mathfrak{p}$ is of one of the following types:
\begin{enumerate}
\item $\mathfrak{p}=(x,y,r)$, where $x,y$ are linear forms and $r$ is a quartic;
\item $\mathfrak{p}=(x,q,q')$, where $x$ is a linear form and $q$ and $q'$ are quadrics;
\item $\mathfrak{p} = I_2(\boldsymbol{M})$, where $\boldsymbol{M}$ is a $2 \times 4$ matrix of linear forms or a $3 \times 3$ symmetric matrix of linear forms;
\item either $\mathfrak{p}$ is generated by a linear form, a quadric and three cubics, or $\mathfrak{p}$ is generated by a linear form and $7$ cubics. \end{enumerate}
\end{thm}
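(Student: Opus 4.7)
The plan is to divide into cases according to whether $\p$ is non-degenerate or degenerate, and in the degenerate case by the dimension $s := \dim_k \p_1$ of the linear strand of $\p$.

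Suppose first that $\p$ is non-degenerate. By Proposition~\ref{non-deg}, $e(S/\p) \geq 4$, so the hypothesis $e(S/\p) = 4$ forces $V(\p) \subset \P^{n-1}$ to be a variety of minimal degree. Swinnerton-Dyer's classification of non-degenerate varieties of minimal degree \cite[Theorem~1]{S} then identifies $\p$ as the ideal of $2 \times 2$ minors of either a $2 \times 4$ matrix of linear forms (rational normal scrolls and their cones) or a $3 \times 3$ skew-symmetric matrix of linear forms, which is case~(3).

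Now assume $\p$ is degenerate. By Samuel--Nagata (Theorem~\ref{SN}), $s = 3$ would make $\p$ generated by part of a regular system of parameters and give $e(S/\p) = 1$, so $s \in \{1,2\}$. If $s = 2$, choose linearly independent $x, y \in \p_1$ and pass to the polynomial ring $\bar S := S/(x,y)$, in which $\bar \p := \p/(x,y)$ is a principal height-one prime generated by an irreducible form of degree $e(\bar S/\bar \p) = e(S/\p) = 4$; lifting this generator to a quartic $r \in S$ yields $\p = (x, y, r)$, which is case~(1). If $s = 1$, pick the unique (up to scalar) linear form $x \in \p$ and apply the Brodmann--Schenzel classification of degenerate height-three primes of multiplicity $4$ \cite[Theorem~2.1]{BS}. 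That classification produces exactly the three remaining generating patterns: a linear form together with two quadrics (case~(2)), a linear form together with a quadric and three cubics, or a linear form together with seven cubics (the two alternatives of case~(4)).

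The main technical obstacle is the $s = 1$ sub-case: Brodmann--Schenzel is phrased in terms of cohomological data of the associated projective scheme, and one must translate it into the stated minimal-generator shapes. In particular, one must check that the listed three patterns exhaust the possibilities, ruling out, for example, a single quadric paired with fewer than three cubics, or a prime generated by a linear form and three or more quadrics, for a height-three prime of multiplicity $4$ containing exactly one independent linear form. Once this translation is carried out, combining it with the two straightforward cases above yields the full list (1)--(4).
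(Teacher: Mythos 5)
Your proposal is correct and follows exactly the route the paper itself indicates: the paper offers no proof of Theorem~\ref{e=4}, only the attribution of the non-degenerate case to Swinnerton-Dyer's classification of varieties of minimal degree and of the degenerate cases to Brodmann--Schenzel, which is precisely the division you make (with the easy sub-split on the number of independent linear forms handled the obvious way). Your honest flagging of the remaining translation work in the one-linear-form sub-case does not create a gap relative to the paper, since the paper likewise defers entirely to \cite{S} and \cite{BS} for those details.
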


\subsection{Related results}

Finally we recall two results of the authors' needed for the main result:

\begin{thm}[{\cite[Theorem 3.5]{HMMS}}]\label{ht2} For any ideal $I$ of height two generated by $n$ quadrics in a polynomial ring $S$, one has $\pd(S/I) \le 2n - 2$.  Moreover, this bound is tight.
\end{thm}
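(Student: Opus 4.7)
The plan is to induct on the number $n$ of quadric generators for the inequality, and address tightness by exhibiting explicit examples. The base case $n = 2$ is immediate: $I$ is a complete intersection of two quadrics, so $\pd(S/I) = 2 = 2(2)-2$. For the inductive step, I would pick a generator $q = q_n$, set $I' = (q_1,\ldots,q_{n-1})$, and exploit the short exact sequence
$$0 \to (S/(I':q))(-2) \xrightarrow{\,\cdot q\,} S/I' \to S/I \to 0,$$
which yields $\pd(S/I) \leq \max\{\pd(S/I'),\, \pd(S/(I':q)) + 1\}$.

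When $\h(I') = 2$, the inductive hypothesis supplies $\pd(S/I') \leq 2n - 4$, and the task reduces to proving $\pd(S/(I':q)) \leq 2n - 3$. To analyze the colon ideal, I would invoke linkage: choose a regular sequence $\alpha, \beta \in I'$, let $K = (\alpha,\beta) : (I')^{un}$, and use Theorem~\ref{linkage}, so that $K$ is linked to $(I')^{un}$. Since $I'$ is generated by quadrics, the minimal generators of $K$ occur in controlled degrees, and $(I':q)$ can be compared to $K$ through the linkage duality. Then Lemma~\ref{doublelink} together with Lemma~\ref{ses} and an inductive estimate on $\pd(S/K)$ would close the bound. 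When $\h(I') \leq 1$, a common linear or quadratic factor divides every generator of $I'$; factoring it out reduces to a smaller problem that fits into the induction. For tightness, I would exhibit (as in \cite{Mc}) an explicit family of ideals generated by $n$ quadrics of height two whose minimal free resolutions have length exactly $2n - 2$, built from ideals primary to a codimension-three linear prime with carefully chosen embedded structure.

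The main obstacle I anticipate is controlling the colon ideal $(I':q)$: its minimal generators come from syzygies among the $q_i$ and can arise in various degrees, so bounding $\pd(S/(I':q))$ by $2n-3$ uniformly in $n$ requires detailed tracking of the syzygy structure. The delicacy is sharpest when $I$ has embedded primes of height three, because the passage between $I^{un}$ and $I$ via the sequence $0 \to I^{un}/I \to S/I \to S/I^{un} \to 0$ must not inflate the projective dimension beyond $2n - 2$; making this work demands simultaneous control of the generator count, the degree structure, and the depth behavior at the embedded primary components, all calibrated so that the inductive step adds at most two to the projective dimension for each new quadric generator.
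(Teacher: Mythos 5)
This theorem is not proved in the present paper at all --- it is imported from \cite{HMMS} --- so your proposal has to stand on its own, and as a standalone argument it has a genuine gap at exactly the point you flag as ``the main obstacle.'' The inductive step requires $\pd(S/(I':q_n)) \le 2n-3$, and nothing in your sketch delivers this. The colon ideal $(I':q_n)$ is not generated by quadrics, and its minimal generators can be numerous and of high degree, so the inductive hypothesis says nothing about it. The linkage tools you invoke do not apply either: Lemma~\ref{ses} is stated only for almost complete intersections (so for a height-two $I'$ it is usable only when $n-1=3$); Lemma~\ref{doublelink} requires two \emph{unmixed} ideals linked to a common ideal; and $(I':q_n)$ is in general neither unmixed nor a link of $(I')^{un}$ --- the link is $(\alpha,\beta):(I')^{un}$ for a regular sequence $\alpha,\beta\in I'$, a different ideal with no useful containment relation to $(I':q_n)$, and its generators are \emph{not} in controlled degrees for general height-two unmixed ideals. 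The case $n=3$ closes only because $(q_1,q_2):q_3$ happens to equal the link of $I^{un}$ and is therefore Cohen--Macaulay; already at $n=4$ the colon $(q_1,q_2,q_3):q_4$ admits no such description, and that is precisely where the real work lies. Saying that ``detailed tracking of the syzygy structure'' is required is a correct diagnosis, not a proof.

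For comparison, the argument in \cite{HMMS} is not an induction of this shape. One first notes $e(S/I)=e(S/I^{un})\le 4$, since $I$ contains a complete intersection of two quadrics, and then runs a case analysis on the structure of the unmixed part (using, for instance, Proposition~\ref{h2e2} for the multiplicity-two components). The bound $2n-2$ is only needed, and only attained, in the degenerate case where all the quadrics lie in a height-two linear prime $(x,y)$; there one writes $q_i=a_ix+b_iy$ and controls the resolution directly via the $2\times n$ matrix of linear coefficients. The tight examples from \cite{Mc}, such as $(x^2,y^2,a_1x+b_1y,\ldots,a_{n-2}x+b_{n-2}y)$, live in exactly this case, which is consistent with your remark on tightness but also shows the difficulty is structural rather than inductive. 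To salvage your approach you would need an explicit, uniform bound on $\pd(S/(I':q))$ for $I'$ a height-two ideal of quadrics that is not an almost complete intersection, and general linkage theory does not supply one.
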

In particular, an ideal generated by $4$ independent quadrics of height two satisfies $\pd(S/I) \le 6$.  

The following is a generalization of a result of Engheta.

\begin{thm}[{\cite[Corollary 2.8]{HMMS3}}, {\cite[Theorem 1]{En2}}]\label{e6} Let $I$ be an almost complete intersection of height $g > 1$ generated by quadrics.  Then $e(S/I) \le 2^g - g + 1$.  Moreover, if $e(S/I) = 2^g - g + 1$, then $S/I$ is Cohen-Macaulay.
\end{thm}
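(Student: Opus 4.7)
My plan is to combine a Noether-style parameter reduction with a Gorenstein-duality bound in an Artinian complete intersection; this avoids a direct linkage argument. After enlarging scalars (which is flat and preserves height, multiplicity, and the Cohen-Macaulay property) I may assume $k$ is algebraically closed. By prime avoidance I pick $g$ general $k$-linear combinations $q_1,\dots,q_g$ of the generators of $I$ forming a regular sequence, set $J=(q_1,\dots,q_g)$, and after adjusting generators of $I$ write $I=J+(f)$ for a quadric $f\notin J$, so that $e(S/J)=2^g$. I then pick a generic sequence $\theta=\theta_1,\dots,\theta_{n-g}$ of linear forms that is regular on $S/J$ and set $A:=S/(J+(\theta))$. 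Then $A$ is a standard graded Artinian complete intersection of $g$ quadrics in $g$ variables, hence Gorenstein with Hilbert function $h_i=\binom{g}{i}$, socle degree $g$, and $\dim_k A=2^g$. The image $\bar f\in A_2$ is nonzero for generic $\theta$.

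Since $\theta$ is a system of parameters for $S/I$,
\[ e(S/I)\le \lambda\bigl(S/(I+(\theta))\bigr)=\lambda(A/\bar fA)=2^g-\dim_k \bar fA, \]
with equality in the first step iff $S/I$ is Cohen-Macaulay. It therefore suffices to prove the following lemma, applied with $\sigma=g$ and $d=2$: \emph{for any standard graded Artinian Gorenstein $k$-algebra $B$ of socle degree $\sigma$ and any nonzero $\bar f\in B_d$ with $d\le\sigma$, one has $\dim_k \bar fB\ge \sigma-d+1$.} Multiplication by $\bar f$ gives $B/\ann_B(\bar f)\cong \bar fB$ as $B$-modules. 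The nondegenerate Gorenstein pairing $B_d\times B_{\sigma-d}\to B_\sigma\cong k$ forces $\bar f\cdot B_{\sigma-d}=B_\sigma$, so $\bar fB$ contains and hence (since $\dim_k B_\sigma=1$) equals the full socle of $B$. Consequently $B/\ann_B(\bar f)$ is itself Artinian Gorenstein, with socle in degree $\sigma-d$. Any standard graded Artinian Gorenstein $k$-algebra of socle degree $s$ has $h_i\ge1$ in each degree $0\le i\le s$: the least $i$ with $h_i=0$ would, via the standard-graded identity $B_j=B_1\cdot B_{j-1}$, propagate the vanishing up to the socle degree, contradicting $h_s=1$. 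Summing gives $\dim_k B/\ann_B(\bar f)\ge \sigma-d+1$.

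Putting things together, $\dim_k \bar fA\ge g-1$, so $e(S/I)\le 2^g-g+1$; equality forces $\lambda(S/(I+(\theta)))=e(S/I)$, whence $S/I$ is Cohen-Macaulay. The delicate part of the argument is the Gorenstein-duality input of the key lemma: confirming that cutting by $\ann_B(\bar f)$ preserves Gorensteinness (through the socle-containment argument) and pinning down the socle degree of the quotient so that the $h_i\ge1$ propagation applies. Everything else amounts to careful bookkeeping with generic linear parameter reductions and the resulting multiplicity inequalities.
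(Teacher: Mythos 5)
The paper does not prove this theorem; it imports it from \cite{En2} and \cite{HMMS3}, where the argument goes by linkage: one bounds $e(S/I)=e(S/J)-e(S/(J:f))$ from above by bounding $e(S/(J:f))$ below. Your route via a generic Artinian reduction is genuinely different and mostly sound: the reduction to an Artinian complete intersection $A$ of $g$ quadrics in $g$ variables, the inequality $e(S/I)\le\lambda(S/(I+(\theta)))$ with equality characterizing Cohen--Macaulayness, and the key Gorenstein-duality lemma (that a nonzero element of degree $d$ in a standard graded Artinian Gorenstein algebra of socle degree $\sigma$ generates an ideal of length at least $\sigma-d+1$) are all correct as written.

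The gap is the sentence ``The image $\bar f\in A_2$ is nonzero for generic $\theta$,'' which you assert without proof and which is the entire crux: if $\bar f=0$ your chain only gives $e(S/I)\le 2^g$. Since $J_2\cap(\theta)S_1=0$ for generic $\theta$, the claim $\bar f\ne 0$ is equivalent to saying that the generic $(g-1)$-plane of $\mathbb{P}^{n-1}$ does not lie on any quadric of the $(g+1)$-dimensional system $I_2$, and this is \emph{not} a formal consequence of $f\notin J$. For example, if $I_2=\ell\cdot M$ for a linear form $\ell$ and a $(g+1)$-dimensional space $M$ of linear forms, then \emph{every} $(g-1)$-plane lies on some member of $I_2$ (because $M$ must meet the $(n-g)$-dimensional space of linear forms vanishing on the plane); what rules this sort of degeneracy out is only the hypothesis $\h(I)=g\ge 2$, which your proof otherwise never uses beyond extracting the regular sequence. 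Making the claim rigorous requires a real argument --- e.g.\ a dimension count on Fano varieties of $(g-1)$-planes in quadrics showing that a $g$-dimensional projective family of quadrics whose members cover the generic $(g-1)$-plane must consist entirely of quadrics of rank at most $2$, hence be contained in $\ell\cdot S_1$ (height $1$, excluded) or equal to $(x,y)^2$ (where one checks directly that the covered planes form a proper subfamily, and that $\overline{xy}\ne 0$ in $k[x,y]/(x^2,y^2)$ anyway). By contrast, the linkage proof gets the corresponding nondegeneracy for free, since there one only needs $(q_1,\dots,q_g):f$ to have height exactly $g$, i.e.\ that $f$ is a zerodivisor modulo the complete intersection, which is immediate from $\h(I)=g<g+1$. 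Either supply the missing linear-systems argument or switch to the localization-at-associated-primes/linkage step at this point.
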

Engheta proved an upper bound in the case of any graded almost complete intersection.  The Cohen-Macaulayness and a more general bound are given in \cite{HMMS3}.
In particular, this result shows that an ideal $I$ of height $3$ generated by $4$ quadrics has multiplicity at most $6$, and moreover, if $I$ has multiplicity exactly $6$, it is Cohen-Macaulay and hence $\pd(S/I) = 3$.

\section{Basic Results}\label{basic}

In this section we collect some basic results and notation.  If $I$ is an ideal generated by $4$ quadric forms in a polynomial ring $S$, then $\textrm{ht}(I) \le 4$.  If $\textrm{ht}(I) = 1$ or $4$, then $\pd(S/I) = 4$.  By Theorem~\ref{ht2}, it suffices to consider the height three case.  

\begin{Ass}\label{Not} Unless otherwise specified, we use the following notation in the remainder of the paper$:$
\begin{itemize}
\item $S$ is a polynomial ring over an algebraically closed field $k,$
\item $I = (q_1, q_2, q_3, q_4)$ is an $S$-ideal of height $3,$
\item $q_1, q_2, q_3, q_4$ are independent, homogeneous polynomials of degree $2,$
\item ${\rm ht}(q_1, q_2, q_3)=3$.  
\item $L = (q_1, q_2, q_3):I = (q_1, q_2, q_3):I^{un}$.
\end{itemize} 
\end{Ass}

\begin{lem}\label{iuncm} If $I^{un}$ is Cohen-Macaulay, then $\pd(S/I) \le 4$.
\end{lem}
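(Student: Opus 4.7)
The plan is to use linkage together with the almost complete intersection structure of $I$ to reduce the bound to a Cohen-Macaulay computation.

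First, I would observe that under Assumption~\ref{Not}, the sequence $q_1, q_2, q_3$ is a regular sequence (since $\h(q_1,q_2,q_3) = 3$ and they are homogeneous of positive degree in a polynomial ring), so $(q_1,q_2,q_3)$ is a complete intersection of height~$3$. Because $I$ has height~$3$ and is minimally generated by $4$ quadrics, $I$ is an almost complete intersection, which is precisely the hypothesis needed to invoke Lemma~\ref{ses}. Moreover, by the remark following Theorem~\ref{linkage}, the ideal $L = (q_1,q_2,q_3) : I^{un}$ is linked to $I^{un}$ via $\underline{\alpha} = q_1,q_2,q_3$.

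Next, I would apply Theorem~\ref{linkage}(2) (Peskine-Szpiro): since $I^{un}$ is assumed to be Cohen-Macaulay, so is $S/L$. Because $L$ has height~$3$ and $S/L$ is a Cohen-Macaulay quotient of the polynomial ring $S$, the Auslander-Buchsbaum formula gives
\[
\pd(S/L) = \depth(S) - \depth(S/L) = \dim(S) - \dim(S/L) = \h(L) = 3.
\]

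Finally, since $I$ is an almost complete intersection and $L$ is linked to $I^{un}$, Lemma~\ref{ses} yields
\[
\pd(S/I) \le \pd(S/L) + 1 \le 3 + 1 = 4,
\]
as desired.

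There is really no serious obstacle here: the result is essentially a direct assembly of the linkage machinery collected in Section~\ref{back}. The only point that requires a moment's care is verifying that $L$ has the correct height and that $I^{un}$ is actually linked to $L$ (rather than just $L \supseteq I^{un}$), which is handled by the observation that $q_1,q_2,q_3$ is a maximal regular sequence in $I$ and the identity $(q_1,q_2,q_3):I = (q_1,q_2,q_3):I^{un}$ built into Assumption~\ref{Not}.
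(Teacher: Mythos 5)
Your proof is correct and follows exactly the paper's argument: link $I^{un}$ to $L=(q_1,q_2,q_3):I^{un}$, use Peskine--Szpiro to transfer Cohen--Macaulayness to $S/L$ (so $\pd(S/L)=3$), and then apply Lemma~\ref{ses}. You have simply spelled out the intermediate steps that the paper leaves implicit.
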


\begin{proof} By Proposition~\ref{linkage}, $(q_1,q_2,q_3):I = (q_1,q_2,q_3):I^{un}$ is Cohen-Macaulay.  By Lemma~\ref{ses}, $\pd(S/I) \le 4$.
\end{proof}

\begin{prop}\label{e=1}
If $e(S/I)=1$, then ${\rm pd}(S/I)\leq 4$.
\end{prop}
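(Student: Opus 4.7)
The plan is to reduce immediately to Lemma~\ref{iuncm} by showing that $I^{un}$ is Cohen-Macaulay. First I would note that, by the associativity formula (Proposition~\ref{Ass}), the multiplicity is preserved when passing to the unmixed part, so $e(S/I^{un}) = e(S/I) = 1$. Moreover, $I^{un}$ is, by definition, unmixed of height $3$.

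Next I would invoke the Samuel-Nagata theorem (Theorem~\ref{SN}): a homogeneous unmixed ideal of multiplicity $1$ must be generated by part of a homogeneous regular system of parameters of $S$. Since $\h(I^{un}) = 3$, this forces $I^{un} = (x,y,z)$ for some independent linear forms $x,y,z \in S$. In particular, $I^{un}$ is a complete intersection, hence Cohen-Macaulay.

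Finally, because $I$ has height $3$ and is generated by four elements, it is an almost complete intersection, so the setup of Lemma~\ref{iuncm} applies and yields $\pd(S/I) \le 4$ directly.

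There is no serious obstacle here: the entire argument is a clean application of Samuel-Nagata followed by Lemma~\ref{iuncm}. The only minor thing to be careful about is to justify applying Theorem~\ref{SN} to $I^{un}$ (rather than to $I$ itself) and to note that, since the field is assumed algebraically closed in Assumptions~\ref{Not}, Theorem~\ref{SN} is available without any additional reduction.
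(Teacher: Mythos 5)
Your argument is correct and is precisely the paper's proof: apply Samuel--Nagata (Theorem~\ref{SN}) to conclude that $I^{un}$ is generated by part of a regular system of parameters, hence Cohen--Macaulay, and then invoke Lemma~\ref{iuncm}. The extra details you supply (that $e(S/I^{un})=e(S/I)$ via the associativity formula, and that $I^{un}=(x,y,z)$ since its height is $3$) are correct elaborations of the same one-line argument.
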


\begin{proof}
By Theorem~\ref{SN}, $I^{un}$ is Cohen-Macaulay, and by Lemma~\ref{iuncm}, ${\rm pd}(S/I)\leq 4$.
\end{proof}

\begin{lem}\label{LinForm}
If $I^{un}$ or $L = (q_1,q_2,q_3):I$ contains a linear form, then ${\rm pd}(S/I)\leq 5$.
\end{lem}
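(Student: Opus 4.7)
The plan is to reduce to a bound on $\pd(S/L)$ and handle both cases by producing linked ideals of very small multiplicity whose structure is controlled by the classification theorems already recorded.

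By Lemma~\ref{ses}, $\pd(S/I)\leq \pd(S/L)+1$, so it suffices to show $\pd(S/L)\leq 4$ in both cases. The central idea is to find a regular sequence of length three containing a linear form, use it to link $L$ (or, in Case 2, to produce a new link of $I^{un}$ with the same projective dimension as $L$ via Lemma~\ref{doublelink}), and argue that the linked ideal has small enough multiplicity to be Cohen--Macaulay, whence so is $L$ by Theorem~\ref{linkage}(2).

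\textbf{Case 1.} Suppose $L$ contains a linear form $x$. Since $(q_1,q_2,q_3)$ is generated by quadrics, $x\notin (q_1,q_2,q_3)$, and $L$ contains $q_1,q_2,q_3$. By a generic choice and graded prime avoidance I would produce a regular sequence $(x,\alpha,\gamma)\subseteq L$, where $\alpha$ is a quadric in the $k$-span of $q_1,q_2,q_3$ and $\gamma\in L$ is a quadric in the same span if $e(S/L)\leq 4$, or a cubic in $L$ if $5\leq e(S/L)\leq 6$; the remaining case $e(S/L)=7$ corresponds to $e(S/I)=1$, which is handled by Proposition~\ref{e=1}. With $M=(x,\alpha,\gamma):L$ and $e(S/(x,\alpha,\gamma))\in\{4,6\}$, Theorem~\ref{linkage}(3) forces $e(S/M)\leq 2$. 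Hence either $L=(x,\alpha,\gamma)$ is a complete intersection, or $M$ is a complete intersection by Theorem~\ref{SN} ($e(S/M)=1$) or Corollary~\ref{e=2} ($e(S/M)=2$). In every case $M$ (and hence $L$, by Theorem~\ref{linkage}(2)) is Cohen--Macaulay, giving $\pd(S/L)=3$.

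\textbf{Case 2.} Suppose $y\in I^{un}$ is a linear form. If $e(S/I)=6$, Theorem~\ref{e6} shows $I^{un}$ is Cohen--Macaulay and Lemma~\ref{iuncm} gives $\pd(S/I)\leq 4$. Otherwise, running the analogous prime-avoidance argument inside $I^{un}$ (using $y\notin(q_1,q_2,q_3)$), I would construct a regular sequence $\underline{\beta}\subseteq I^{un}$ of the form $(y,f_1,f_2)$ with $f_1,f_2\in I$ generic quadrics when $e(S/I)\leq 4$, or of the form $(y,f_1,g)$ with $g\in I^{un}$ a cubic when $e(S/I)=5$. Let $L'=\underline{\beta}:I^{un}$. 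Since both $L$ and $L'$ are linked to $I^{un}$, Lemma~\ref{doublelink} gives $\pd(S/L)=\pd(S/L')$. Because $y\in\underline{\beta}\subseteq L'$ and $L'$ contains the quadric(s) from $\underline{\beta}$, the Case 1 analysis applies to $L'$ (with $\underline{\beta}$ playing the role of $(q_1,q_2,q_3)$), yielding $\pd(S/L')\leq 4$. Therefore $\pd(S/L)\leq 4$ and $\pd(S/I)\leq 5$.

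The main technical obstacle is establishing the existence of the regular sequences through graded prime avoidance, especially in degenerate situations (for instance, when the linear form divides one of the quadrics, or when the $k$-subspace of quadrics in $L$ that avoid a given prime is small). These cases require careful arguments exploiting that $L$ (respectively $I^{un}$) has height strictly greater than the height-two subideal generated by the first two elements of the intended regular sequence.
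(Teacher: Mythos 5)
Your overall strategy (link, control the multiplicity of the link, invoke classification results) is the paper's strategy, but there is a genuine gap at the crucial point, namely the multiplicity\nobreakdash-$2$ case. In Case 1, when $e(S/L)=2$ you obtain $M=(x,\alpha,\gamma):L$ with $e(S/M)=2$ and then claim $M$ is a complete intersection ``by Corollary~\ref{e=2}.'' That corollary applies only to \emph{prime} ideals of height three and multiplicity two; the linked ideal $M$ is merely unmixed, and unmixed height-three multiplicity-two ideals need not be Cohen--Macaulay --- for instance $(z)+(x^2,xy,y^2,ax+by)$ with $\h(x,y,z,a,b)=5$, or $(x,y,z)^2+(ax+by+cz,dx+ey+fz)$, both of which have projective dimension $4$ (Proposition~\ref{h2e2}(2), Proposition~\ref{1;2}). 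By the symmetry of linkage (Theorem~\ref{linkage}(2)), $M$ is Cohen--Macaulay if and only if $L$ is, so your argument is circular at exactly the point where it matters: it would prove $\pd(S/L)=3$ and hence $\pd(S/I)\le 4$ in all cases, whereas the multiplicity-$2$ case is precisely the reason the lemma's bound is $5$ and not $4$. The same defect propagates into your Case 2, which reduces to Case 1 when $e(S/I^{un})=2$.

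The paper closes this case differently: writing $L=(x)+L'$ with $L'$ unmixed of height $2$ and multiplicity $2$ and $x$ regular on $S/L'$, it invokes Engheta's classification of unmixed (not necessarily prime) height-two multiplicity-two ideals (Proposition~\ref{h2e2}) to get $\pd(S/L')\le 3$, hence $\pd(S/L)\le 4$ and $\pd(S/I)\le 5$ by Lemma~\ref{ses}. If you replace your appeal to Corollary~\ref{e=2} with this argument (applied directly to $L$, so that the second link to $M$ becomes unnecessary), your proof aligns with the paper's; your treatment of the multiplicities $1$, $3$ and $4$ (via Theorem~\ref{SN}, a double link to a multiplicity-one ideal, and Lemma~\ref{unmixedequal}, respectively) matches what the paper does.
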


\begin{proof}
First suppose $x$ is a linear form contained in $I^{un}$. Since we have the containments $I\subseteq (I,x)\subseteq I^{un}$, it follows that ${\rm ht}\,(I,x)= 3$. Hence, after possibly relabeling the quadrics $q_1,\ldots,q_4$, we may assume $x,q_1,q_2$ form a regular sequence. Let $L':=(x,q_1,q_2):I=(x,q_1,q_2):I^{un}$.  We have now reduced to the case when an ideal $J$ (either $L'$ or $L$) directly linked to $I^{un}$ contains a complete intersection of one linear form and two quadric forms.

Since $J$ contains a complete intersection of one linear and two quadric forms, say $x, q_1, q_2$, $e(S/J) \le 4$.  If $e(S/J) = 4$, then $J = (x, q_1, q_2)$.  In particular, $J$ is Cohen-Macaulay and ${\rm pd}(S/I) \le 4$ by Lemma~\ref{ses}.  If $e(S/J) = 1$, then $J$ is Cohen-Macaulay by Theorem~\ref{SN} and again ${\rm pd}(S/I) \le 4$.  If $e(S/J) = 2$, then we can write $J = (x) + J^{\prime}$, where $J^{\prime}$ is unmixed, ${\rm ht}(J^{\prime}) = 2$ and $e(S/J^{\prime}) = 2$ such that $x$ is regular on $J^{\prime}$.  By Proposition~\ref{h2e2}, ${\rm pd}(S/J^{\prime}) \leq 3$.  This yields ${\rm pd}(S/J)\leq 4$ (because $x$ is regular on $S/J^{\prime}$) which, by Jemma \ref{ses}, gives ${\rm pd}(S/I)\leq 5$.  Finally if $e(S/J) = 3$, consider $K = (x,q_1,q_2):J$.  By Theorem~\ref{linkage} we have $e(S/K) = e(S/(x,q_1,q_2)) - e(S/J) = 4-3 = 1$.  So $K$ is Cohen-Macaulay by Theorem~\ref{SN}.  By Lemma~\ref{doublelink}, ${\rm pd}(S/I^{un}) = 3$, and by Lemma~\ref{iuncm}, ${\rm pd}(S/I) \le 4$.
\end{proof}

\begin{lem}\label{inters} Suppose $I \subseteq K \cap J$, where $K = K' + (q)$, ${\rm ht}(K) = {\rm ht}(J) = 3$, ${\rm ht}(K') = 2$ and $q$ is a quadric.  Then there exists a quadric $q^{\prime}\in K\cap J$ such that $K=K' + (q^{\prime})$.
\end{lem}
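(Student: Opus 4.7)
The plan is to work in degree $2$ and exploit the fact that $L = L' + (q)$ with $q$ a quadric forces a very small "new" degree-$2$ part modulo $L'$. Specifically, since $L' + (q)$ as an ideal has degree-$2$ piece $L_2 = L'_2 + k \cdot q$ as a $k$-subspace of $S_2$ (any degree-$2$ element of $(q)$ is a scalar multiple of $q$), every quadric in $L$ is of the form $aq + \ell$ with $a \in k$ and $\ell \in L'_2$.

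Apply this to each of the generators $q_1, q_2, q_3, q_4$ of $I$: since $I \subseteq L$, we may write $q_i = a_i q + \ell_i$ with $a_i \in k$ and $\ell_i \in L'_2$. I will claim that not every $a_i$ vanishes. Indeed, if $a_1 = a_2 = a_3 = a_4 = 0$, then every $q_i$ lies in $L'$, whence $I \subseteq L'$; but this forces $3 = \h(I) \leq \h(L') = 2$, a contradiction.

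Pick any index $i$ with $a_i \neq 0$ and set $q' := a_i^{-1} q_i$. Then $q' = q + a_i^{-1}\ell_i$, so $q - q' \in L'$. Consequently
\[
L' + (q') = L' + (q) = L,
\]
and $q'$ is a quadric lying in $L$. Moreover $q' = a_i^{-1} q_i \in I \subseteq J$, so $q' \in L \cap J$, which is what we wanted.

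There is really no obstacle here beyond noticing the degree-$2$ decomposition $L_2 = L'_2 + k \cdot q$; once this is in hand the lemma follows immediately from a pigeonhole argument on the scalars $a_i$, using only that $I$ is generated by quadrics in $L$ and has height strictly greater than $\h(L')$.
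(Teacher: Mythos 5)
Your proof is correct and follows essentially the same route as the paper's: decompose each quadric generator $q_i$ of $I$ as an element of $L'$ plus a scalar multiple of $q$, observe that not all scalars can vanish since $I \subseteq L'$ would contradict $\h(I)=3 > 2 = \h(L')$, and take $q'$ to be (a scalar multiple of) such a $q_i$. The paper simply sets $q'=q_i$ rather than normalizing by $a_i^{-1}$; the two are interchangeable.
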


\begin{proof}
For each $i=1,\ldots,4$, write $q_i=f_i+\alpha_iq$, where $f_i$ is an element of $K^{\prime}$ and $\alpha_i \in k$.
If $\alpha_i = 0$ for all $i$, one has $I\subseteq K^{\prime}$, which implies ${\rm ht}\,K^{\prime}\leq 2$ and gives a contradiction. Hence, one may assume $\alpha_i\neq 0$ for some $i$. Now, take $q^{\prime}=q_i$.
\end{proof}

\begin{lem}\label{sum} Suppose $I_1, I_2$ are ideals.  Then 
\[\pd(S/(I_1 \cap I_2)) \le \max\{\pd(S/I_1), \pd(S/I_2), \pd(S/(I_1+I_2)) - 1\}.\]
\end{lem}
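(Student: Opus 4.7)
The plan is to use the Mayer--Vietoris short exact sequence together with the standard projective-dimension bound for short exact sequences.

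First I would write down the exact sequence
\[
0 \to S/(I_1 \cap I_2) \xrightarrow{\ (1,-1)\ } S/I_1 \oplus S/I_2 \xrightarrow{\ (+)\ } S/(I_1+I_2) \to 0,
\]
where the first map sends a class to the pair of its residues (with opposite signs) and the second sends $(a,b)$ to $a+b$. Exactness of this sequence is a routine verification: the kernel of the second map is exactly the image of the first, and injectivity/surjectivity at the ends are immediate.

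Next I would invoke the elementary homological fact that, for any short exact sequence $0 \to A \to B \to C \to 0$ of finitely generated $S$-modules, one has
\[
\pd(A) \le \max\{\pd(B),\,\pd(C)-1\}.
\]
This follows by inspecting the long exact sequence $\tor_{i+1}^S(C,k) \to \tor_i^S(A,k) \to \tor_i^S(B,k)$ for $i$ larger than the right-hand side, forcing $\tor_i^S(A,k)=0$. Applied to the Mayer--Vietoris sequence above, this gives
\[
\pd(S/(I_1 \cap I_2)) \le \max\bigl\{\pd(S/I_1 \oplus S/I_2),\ \pd(S/(I_1+I_2)) - 1\bigr\}.
\]

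Finally, since projective dimension of a finite direct sum of modules is the maximum of the projective dimensions of the summands (minimal free resolutions add), $\pd(S/I_1 \oplus S/I_2) = \max\{\pd(S/I_1),\pd(S/I_2)\}$, and the stated inequality follows. There is no serious obstacle here; the only thing to be careful about is the direction of the inequality in the short-exact-sequence bound and the sign/indexing when applying it to $A$ rather than to $C$.
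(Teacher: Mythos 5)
Your proof is correct and follows exactly the same route as the paper: the Mayer--Vietoris sequence $0 \to S/(I_1 \cap I_2) \to S/I_1 \oplus S/I_2 \to S/(I_1+I_2) \to 0$ combined with the long exact sequence of $\tor^S_i(-,k)$. The paper states this in one line; your additional remarks on the direct-sum reduction and the indexing are the right details to check.
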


\begin{proof} This follows from the long exact sequence of $\tor_i^S(-,k)$ applied to the short exact sequence
\[0 \to S/(I_1 \cap I_2) \to S/I_1 \oplus S/I_2 \to S/(I_1 + I_2) \to 0.\]
\end{proof}

\begin{prop}\label{messy} Let $I$ be an almost complete intersection of height $h$.  Let $F_\*$ be the minimal free resolution of the unmixed part $I^{un}$ of $I$.  Let $\partial_i$ denote the $i^{\text th}$ differential in $F_\*$.  Then
\[\pd(S/I) \le \max\{h + 2, \pd(\coker(\partial_{h+1}^*))\}.\]
\end{prop}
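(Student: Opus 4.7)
The plan is to exploit the standard linkage sequence for an almost complete intersection and convert a bound on $\pd(S/K)$ into a bound on $\pd(S/I)$, where $K$ is the linked ideal. Assume throughout that $F_\*$ denotes the minimal free resolution of $S/I^{un}$, with $\partial_i\colon F_i\to F_{i-1}$ (a convention under which the formula matches cleanly).

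Since $I$ is an almost complete intersection of height $h$, I write $I=(\underline{\alpha},\beta)$ with $\underline{\alpha}=\alpha_1,\ldots,\alpha_h$ a regular sequence contained in $I$, and set $K=(\underline{\alpha}):\beta=(\underline{\alpha}):I^{un}$. Multiplication by $\beta$ produces the short exact sequence
\[0\to S/K\xrightarrow{\cdot\beta} S/(\underline{\alpha})\to S/I\to 0,\]
which gives $\pd(S/I)\le\max\{h,\pd(S/K)+1\}$. From $0\to(\underline{\alpha})\to K\to K/(\underline{\alpha})\to 0$ together with $\pd((\underline{\alpha}))=h-1$ (Koszul), I obtain $\pd(S/K)\le\max\{h,\pd(K/(\underline{\alpha}))+1\}$, so it suffices to bound $\pd(K/(\underline{\alpha}))$.

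Here I invoke the linkage identification: standard linkage theory gives $K/(\underline{\alpha})\cong\hom_{S/(\underline{\alpha})}(S/I^{un},S/(\underline{\alpha}))$, and the change-of-rings isomorphism (degenerating because $S/(\underline{\alpha})$ is a Gorenstein complete intersection of codimension $h$) further identifies the right-hand side with $\ext^h_S(S/I^{un},S)$ as an $S$-module, up to a degree twist that does not affect projective dimension. Thus $\pd(K/(\underline{\alpha}))=\pd_S(\ext^h_S(S/I^{un},S))$, which I can attack via the dual complex $F_\*^*$. Since $\grade(I^{un})=h$, we have $\ext^i(S/I^{un},S)=0$ for $i<h$, and therefore the truncated complex
\[0\to F_0^*\to F_1^*\to\cdots\to F_{h-1}^*\to\im(\partial_h^*)\to 0\]
is exact, yielding $\pd(\im(\partial_h^*))\le h-1$. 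Breaking
\[0\to\ker(\partial_{h+1}^*)\to F_h^*\xrightarrow{\partial_{h+1}^*}F_{h+1}^*\to\coker(\partial_{h+1}^*)\to 0\]
into two short exact sequences with free middle terms yields $\pd(\ker(\partial_{h+1}^*))\le\max\{0,\pd(\coker(\partial_{h+1}^*))-2\}$. Applying the standard $\pd$ estimate to the short exact sequence
\[0\to\im(\partial_h^*)\to\ker(\partial_{h+1}^*)\to\ext^h(S/I^{un},S)\to 0\]
then gives $\pd(\ext^h)\le\max\{h,\pd(\coker(\partial_{h+1}^*))-2\}$, and chaining all the bounds produces $\pd(S/I)\le\max\{h+2,\pd(\coker(\partial_{h+1}^*))\}$.

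The main conceptual obstacle is the linkage identification $K/(\underline{\alpha})\cong\ext^h_S(S/I^{un},S)$, which requires either the classical Peskine--Szpiro duality or, equivalently, the change-of-rings spectral sequence $\ext^p_{S/(\underline{\alpha})}(-,\ext^q_S(S/(\underline{\alpha}),S))\Rightarrow\ext^{p+q}_S(-,S)$ collapsing thanks to Gorensteiness of $S/(\underline{\alpha})$. Everything else is a routine cascade of projective-dimension estimates on short exact sequences built from the dual complex.
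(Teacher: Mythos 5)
Your argument is correct and follows essentially the same route as the paper's proof: the linkage exact sequence $0 \to S/(C:f_{h+1}) \to S/C \to S/I \to 0$, the Peskine--Szpiro identification $(C:f_{h+1})/C \cong \ext_S^h(S/I^{un},S)$, and the analysis of $\ext^h$ via the dual complex $F_\*^*$ through the sequence $0 \to \im(\partial_h^*) \to \ker(\partial_{h+1}^*) \to \ext_S^h(S/I^{un},S) \to 0$. Your computation $\pd(\im(\partial_h^*)) \le h-1$ is in fact the bound the chain of inequalities needs (the paper's parenthetical ``$\pd(\im(\partial_h^*)) = h$'' is off by one, though harmless), so no changes are required.
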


\begin{proof}  Set $I = (f_1,\ldots, f_h, f_{h+1})$, where $C = (f_1,\ldots, f_h)$ is a complete intersection.  Then
\begin{align*}
\pd(S/I) &\le \max\{h,\pd(S/(C:f_{h+1}))+1\}\\
&\le \max\{h+1, \pd((C:f_{h+1})/C) + 2\}\\
&\le \max\{h + 2, \pd(\ker(\partial_{h+1}^*)) + 2\}\\
&= \max\{h + 2, \pd(\coker(\partial_{h+1}^*))\}.
\end{align*}
The first inequality follows from the the short exact sequence
\[0 \to S/(C:f_{h+1}) \to S/C \to S/I \to 0.\]
The second inequality follows from the short exact sequence
\[0 \to (C:f_{h+1})/C \to S/C \to S/(C:f_{h+1}) \to 0.\]
The third inequality follows from the isomorphism $(C:f_{h+1})/C \iso \ext_S^h(S/I^{un},S)$ (see \cite[Lemma 3.1]{HMMS2}) and the short exact sequence
\[0 \to \im(\partial_h^*) \to \ker(\partial_{h+1}^*) \to \ext_S^h(S/I^{un},S) \to 0.\]
By \cite[Lemma 3.3]{HMMS2}, $\pd(\im(\partial_h^*)) = h-1$, which implies that $\pd(\ext_S^h(S/I^{un},S)) = \pd(\ker(\partial_{h+1}^*)) $.  Finally, $\pd(\coker(\partial_{h+1}^*)) = \pd(\ker(\partial_{h+1}^*)) + 2$ gives the last equality.
\end{proof}

We make use of a slight generalization of an observation of Ananyan-Hochster:

\begin{lem}[{\cite[Lemma 3.3]{AH}}]\label{regseq}
Let $f_1,\ldots,f_t$ be a regular sequence of forms in $S$ and set $A = k[f_1,\ldots,f_t]$. Then for any ideal $I$ of $S$ extended from $A$ one has  $\pd(S/I) \le t$. More generally, for any finite $S$-module $M$ presented by a matrix with entries in $A$, one has $\pd(M) \le t$.  In particular, any ideal $I$ whose generators can be written in terms of at most $t$ variables satisfies $\pd(S/I) \le t$.
\end{lem}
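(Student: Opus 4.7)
The plan is to reduce the claim over $S$ to a standard global-dimension bound over the subring $A$, by establishing that $S$ is a flat (indeed, free) $A$-module. First, I would observe that a regular sequence of forms $f_1, \ldots, f_t$ is automatically algebraically independent over $k$, so $A = k[f_1, \ldots, f_t]$ is a polynomial ring in $t$ variables and therefore has global dimension $t$. Consequently, every finitely generated $A$-module $M_0$ admits a free resolution of length at most $t$ over $A$.

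The heart of the argument is to show that $S$ is flat over $A$. The clean way is via a Koszul computation: letting $\mathfrak{n} = (f_1, \ldots, f_t)A$, so that $A/\mathfrak{n} \cong k$, the Koszul complex $K_\*(f_1, \ldots, f_t; A)$ is a free resolution of $k$ over $A$, and tensoring it with $S$ over $A$ yields the Koszul complex on the same elements viewed now in $S$. The hypothesis that $f_1, \ldots, f_t$ is a regular sequence in $S$ makes this latter complex acyclic in positive degrees, so $\tor_i^A(k, S) = 0$ for every $i > 0$. For a graded module over the graded local ring $A$, this vanishing forces $S$ to be $A$-free, and in particular flat.

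Once flatness is in hand, the conclusion is immediate. Given a finitely generated $S$-module $M$ presented by a matrix $\phi$ with entries in $A$, let $M_0$ be the cokernel of $\phi$ regarded as an $A$-linear map between finite free $A$-modules; then $M \cong S \otimes_A M_0$. Taking an $A$-free resolution of $M_0$ of length at most $t$ and applying $S \otimes_A -$ produces, by flatness, an $S$-free resolution of $M$ of length at most $t$. The ideal statement is the specialization $M_0 = A/I_0$, where $I = I_0 S$, giving $M = S/I$. I expect the flatness step to be the only nontrivial ingredient; everything afterwards is formal.
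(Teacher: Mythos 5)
Your argument is correct and is essentially the standard proof of this fact (the paper itself states the lemma as a citation to Ananyan--Hochster and gives no proof): one shows $A$ is a polynomial ring on the $f_i$, deduces $\tor_i^A(k,S)=0$ for $i>0$ from acyclicity of the Koszul complex on a regular sequence in $S$, concludes by graded Nakayama that $S$ is $A$-free, and then base-changes a length-$\le t$ resolution over $A$. The only point worth making explicit is that the graded freeness criterion applies to $S$ even though it is not a finitely generated $A$-module, because $S$ is graded and bounded below so graded Nakayama still holds.
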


In general $\pd(S/I)$ may be arbitrarily larger than $\pd(S/I^{un})$.  However, for almost complete intersection ideals whose unmixed part is extended from a smaller polynomial ring, we get the following useful corollary to Proposition~\ref{messy}.

\begin{cor}\label{iungens} Let $I = (g_1,\ldots,g_h, g_{h+1})$ be an almost complete intersection of height $h$.  Suppose $I^{un}$ is extended from $A = K[f_1,\ldots,f_t]$, where $f_1,\ldots,f_t$ are a regular sequence of forms.  Then $\pd(S/I) \le \max\{h+ 2, t\}$.
\end{cor}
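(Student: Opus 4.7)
The plan is to combine Proposition~\ref{messy} with Lemma~\ref{regseq}. Proposition~\ref{messy} already reduces the task to bounding $\pd(\coker(\partial_{h+1}^*))$, where $F_\*$ is the minimal graded free resolution of $I^{un}$ and $\partial_i$ denotes its $i$th differential. Thus it suffices to exhibit $\coker(\partial_{h+1}^*)$ as an $S$-module presented by a matrix with entries in $A = k[f_1,\ldots,f_t]$: then Lemma~\ref{regseq} furnishes $\pd(\coker(\partial_{h+1}^*)) \le t$, and combining with Proposition~\ref{messy} yields $\pd(S/I) \le \max\{h+2, t\}$.

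The heart of the argument is to show that the differentials of $F_\*$ themselves have entries in $A$. Writing $I^{un} = J S$ for some homogeneous ideal $J$ of $A$, the idea is to resolve $A/J$ over $A$ first and then base-change to $S$. For this base change to yield a resolution, $S$ must be flat over $A$, which I would deduce via the Koszul complex: $K_\*(f_1,\ldots,f_t;A)$ is a free resolution of $k$ over $A$, and tensoring with $S$ yields $K_\*(f_1,\ldots,f_t;S)$, which is acyclic in positive degrees since $f_1,\ldots,f_t$ is a regular sequence in $S$. This gives $\tor_i^A(k,S) = 0$ for $i>0$, so the graded local criterion for flatness shows that $S$ is flat over $A$.

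With flatness in hand, let $G_\*$ be the minimal graded free resolution of $A/J$ over $A$, which has length at most $t$ by Hilbert's Syzygy Theorem applied to the polynomial ring $A$. Flatness ensures that $G_\* \otimes_A S$ is a free resolution of $S/I^{un}$ over $S$ (and, up to a shift, of $I^{un}$), whose differentials are matrices with entries in $\mathfrak{m}_A = (f_1,\ldots,f_t) \subseteq \mathfrak{m}_S$. In particular, the tensored complex is already minimal over $S$, so it may be identified with the resolution appearing in Proposition~\ref{messy}. Consequently $\partial_{h+1}^*$ is presented by a matrix with entries in $A$, and the same is true of $\coker(\partial_{h+1}^*)$; Lemma~\ref{regseq} then finishes the proof.

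The main obstacle I foresee is the identification of $G_\* \otimes_A S$ with the minimal free resolution appearing in Proposition~\ref{messy}. The minimality assertion follows from $\mathfrak{m}_A \subseteq \mathfrak{m}_S$, but if more care is desired one can verify it through the base-change identity $\tor_i^S(S/I^{un},k) \cong \tor_i^A(A/J,k)$, valid by flatness of $S$ over $A$ together with the fact that the residue fields of $A$ and $S$ agree as $A$-modules; this identity directly shows that the $S$-Betti numbers of $S/I^{un}$ coincide with the $A$-Betti numbers of $A/J$.
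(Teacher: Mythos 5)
Your proposal is correct and follows essentially the same route as the paper: resolve the unmixed part minimally over $A$, base-change along the (faithfully) flat extension $A \to S$, observe that the dual differential $\partial_{h+1}^*$ still has entries in $A$, and combine Proposition~\ref{messy} with Lemma~\ref{regseq}. The paper states this in three lines and leaves the flatness and minimality verifications implicit; your write-up simply supplies those details (Koszul/local criterion for flatness, $\mathfrak{m}_A \subseteq \mathfrak{m}_S$ for minimality), which is a faithful expansion rather than a different argument.
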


\begin{proof}  Let $(F_i,\partial_i)$ be the minimal free resolution of $I^{un} \cap A$ over $A$.  Then the entries of $\partial_{h+1}$ lie in $A$.  Hence
\begin{align*}
\pd_S(S/I) &\le \max\{h+2, \pd_S(\coker(\partial_{h+1}^* \otimes S))\}\\
&= \max\{h+2, \pd_A(\coker(\partial_{h+1}^*))\}\\
&\le \max\{h+2, t\}.
\end{align*}
The first inequality follows from Proposition~\ref{messy}.  The second line follows from the faithful flatness of $S$ over $A$.  The last inequality follows from Lemma~\ref{regseq}.
\end{proof}

\section{$(x,y,z)$-primary ideals}\label{primary}

In this section we give a characterization of the degree $2$ component of an ideal $J$ primary to a height $3$ linear prime $\p = (x,y,z)$ with $e(S/J) = 2, 3$ or $4$.  As shown in \cite{HMMS2}, there are infinitely many distinct such ideals, but we show here that there are only a finite number of possibilities for the low degree generators of such ideals.  First we need a lemma regarding matrices of linear forms.  

For a matrix $\mathbf{M}$, we set $I_k(\mathbf{M})$ to be the ideal generated by the $k \times k$ minors of $M$.  Recall that $I_k(\mathbf{M})$ is unchanged by $k$-linear row and column operations.  We say that $\mathbf{M}$ has a generalized zero if after $k$-linear row and column operations $\mathbf{M}$ has a zero entry.  A matrix which has no generalized zeros is called \textit{$1$-generic}.  We refer the reader to \cite{Eisenbud2} for background on $1$-generic matricies.

\begin{lem}\label{matrixlemma}
Suppose $\mathbf{M}$ is a $2 \times 3$ matrix of linear forms in $S$ such that $\h(I_2(\mathbf{M})) = 1$.  Then, after $k$-linear row and column operations, $\mathbf{M}$ has one of the following forms:
\begin{enumerate}
\item $\mathbf{M} = \begin{pmatrix} a&0&0\\d&e&f\end{pmatrix}$, where $a \neq 0$ and $\h(e,f) = 2$;
\item $\mathbf{M} = \begin{pmatrix} a&b&0\\d&e&0\end{pmatrix}$, where $ae-bd \neq 0$;
\item $\mathbf{M} = \begin{pmatrix} a&b&0\\d&0&b\end{pmatrix}$, where $\h(a,b,d) = 3$.
\end{enumerate}
\end{lem}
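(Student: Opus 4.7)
The plan is to first show that the hypothesis $\h(I_2(\mathbf{M})) = 1$ forces $\mathbf{M}$ to have a generalized zero. This is the $2 \times 3$ case of the classical theory of $1$-generic matrices: any $1$-generic $2 \times 3$ matrix of linear forms in $S$ has $\h(I_2(\mathbf{M})) = 2$ (see \cite{Eisenbud2}). After row and column operations I may therefore assume the zero lies in position $(1,3)$, so $\mathbf{M} = \begin{pmatrix} a & b & 0 \\ d & e & f \end{pmatrix}$ with minors $af$, $bf$, $ae - bd$. If $f = 0$, the height condition forces $ae - bd \neq 0$, which is case (2). If $f \neq 0$ and $\h(a, b) \le 1$, one column operation eliminates one of $a, b$ and places $\mathbf{M}$ in the shape of case (1); a short case analysis verifies the height condition $\h(e, f) = 2$, after further reshuffling if needed.

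The main case is $f \neq 0$ and $\h(a, b) = 2$. Any height-one prime of $S$ containing $I_2(\mathbf{M})$ is principal, generated by a linear form $\ell$; since $\ell$ divides both $af$ and $bf$ while $a, b$ are independent, $\ell$ must divide $f$. Hence $f \mid ae - bd$ and I may write $ae - bd = fh$ for some linear form $h$. I would then split on whether $h$ lies in $\langle a, b \rangle$. If $h = \alpha a + \beta b$ lies in this span, unique factorization in $S$ applied to $a(e - \alpha f) = b(d + \beta f)$ gives $d = \lambda a - \beta f$ and $e = \lambda b + \alpha f$ for a common scalar $\lambda$; the row operation $\text{row }2 \mapsto \text{row }2 - \lambda \cdot \text{row }1$ together with the column operations $\text{col }1 \mapsto \text{col }1 + \beta \cdot \text{col }3$ and $\text{col }2 \mapsto \text{col }2 - \alpha \cdot \text{col }3$ reduces $\mathbf{M}$ to $\begin{pmatrix} a & b & 0 \\ 0 & 0 & f \end{pmatrix}$, which becomes case (1) after a row and a column swap. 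If instead $h \notin \langle a, b \rangle$, a symmetric argument modulo $b$ forces $f \in \langle a, b \rangle$, say $f = \alpha a + \beta b$ with $\beta \neq 0$ (WLOG, by swapping columns $1$ and $2$ if necessary). Solving for $d, e$ and performing one row operation gives $\begin{pmatrix} a & b & 0 \\ -\beta h & \alpha h & f \end{pmatrix}$, and the column operation $\text{col }2 \mapsto \alpha \cdot \text{col }1 + \beta \cdot \text{col }2$ sends the $(1,2)$ entry to $f$ and kills the $(2,2)$ entry, producing $\begin{pmatrix} a & f & 0 \\ -\beta h & 0 & f \end{pmatrix}$. This is exactly case (3), and the height condition $\h(a, f, h) = 3$ follows from the independence of $a, b, h$ together with $\beta \neq 0$.

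The main obstacle is the bookkeeping in the $\h(a, b) = 2$ regime: simultaneously preserving the zero at $(1, 3)$, killing the $(2, 2)$ entry, and matching the $(1, 2)$ and $(2, 3)$ entries required by case (3). A mere swap is not enough; one must identify the correct column combination involving the coefficients $\alpha, \beta$ appearing in the expression of $f$ (or $h$) in the span of $a, b$. Once the right operation is spotted, the remainder of the proof is a direct verification.
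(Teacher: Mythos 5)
Your proof is correct, and it shares its skeleton with the paper's: both begin by invoking $1$-genericity to place a zero in position $(1,3)$, both split on $\h(a,b)$, and both extract the key relation $ae-bd=fh$ for a linear form $h$ from the observation that a height-one minimal prime of $I_2(\mathbf{M})=(af,bf,ae-bd)$ must be $(f)$ once $a,b$ are independent. The divergence is in how that relation is exploited. The paper uses it only to show that, after a row operation, one may assume $\h(d,e,f)\le 2$, which forces a second zero into the matrix; it then finishes by decomposing the resulting ideal of minors $(af,bf,bd)$ as an intersection and asking which pair of entries must be dependent. You instead solve the relation completely: unique factorization gives $d=\lambda a-\beta\ell$, $e=\lambda b+\alpha\ell$ (with $\ell=f$ or $\ell=h$ according to whether $h$ or $f$ lies in $\langle a,b\rangle$), and the scalars $\lambda,\alpha,\beta$ tell you exactly which row and column operations produce the normal forms. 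This is longer to write but more constructive, and it makes the side conditions $\h(e,f)=2$ in case (1) and $\h(a,b,d)=3$ in case (3) fall out of the linear independence of $a,b,h$ rather than being checked after the fact. One small caveat: in the subcase $f\ne 0$, $\h(a,b)\le 1$, if the surviving pair in the bottom row is also dependent, the further column operation lands you in case (2), not case (1), so "after further reshuffling if needed" should be read as allowing the target case to change; the paper's own proof elides the same point.
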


\begin{proof}
Since $\h(I_2(\mathbf{M})) = 1$, $\mathbf{M}$ is not $1$-generic \cite[Theorem 6.4]{Eisenbud2}.  Hence $\mathbf{M}$ has a generalized zero and we may assume that it has the form
\[\mathbf{M} = \begin{pmatrix} a&b&0\\d&e&f\end{pmatrix}.\]
If $\h(a,b) \le 1$, then after a column operation, $\mathbf{M}$ has the form of case (1) above.  So we may assume that $\h(a,b) = 2$.  Suppose $\h(d,e,f) = 3$.  Then since $I_2(\mathbf{M}) = (af, bf, ae-bd)$ has height $1$, we have $ae - bd = xf$ for some $x \in S_1$.   Since $d,e,f$ form a regular sequence, we must have $a \in (d,f)$.  If $a \in (f)$, then it follows that $b \in (f)$ as well, contradicting that ${\rm ht}(a,b) = 2$.  Hence, after a row operation, we may now assume that $\h(d,e,f) \le 2$.  In this case, we can do a column operation so that $\mathbf{M}$ now has one of the following forms:
\[\mathbf{M} = \begin{pmatrix} a&b&0\\d&0&f\end{pmatrix} \qquad \text{or} \qquad \mathbf{M} = \begin{pmatrix} a&b&0\\d&e&0\end{pmatrix}.\]
We are done in the latter case.  In the former case, note that $I_2(\mathbf{M}) = (af, bf, bd) = (a,b) \cap (b,f) \cap (d,f)$.  Since $\h(a,b) = 2$, we must have $\h(b,f) \le 1$ or $\h(d,f) \le 1$.  If $\h(b,f) \le 1$, we are in case (3).  If $\h(d,f) \le 1$, we can perform row and column operations so that $\mathbf{M}$ has the form of case (1) or case (2).

Finally note that the conditions on the entries of the matrices distinguish the cases by the minimal number of generators of $I_2(\mathbf{M})$.
\end{proof}

\begin{lem}\label{matrixlemma2}
Suppose $\mathbf{M}$ is a $2 \times 3$ matrix of linear forms in $S$ such that $\h(I_2(\mathbf{M})) = 2$ and $\h(I_1(\mathbf{M})) = 2$.  Then, after $k$-linear row and column operations, $\mathbf{M}$ has the following form:
\[\mathbf{M} = \begin{pmatrix} a&b&0\\0&a&b\end{pmatrix}, \text{where $a,b$ are distinct linear forms.}\]
\end{lem}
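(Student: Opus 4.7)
The plan is to reduce $\mathbf{M}$ to the stated canonical form by successive row operations, column operations, and an initial linear change of variables in $S$. Since $\h(I_1(\mathbf{M})) = 2$, a linear change of variables in $S$ lets me assume that all entries of $\mathbf{M}$ lie in the two-dimensional $k$-space $\langle a, b\rangle$, and I may write $\mathbf{M} = aA + bB$ with $A, B \in M_{2\times 3}(k)$. The key reformulation is that $\h(I_2(\mathbf{M})) = 2$ is equivalent to $I_2(\mathbf{M})$ being $(a,b)$-primary, equivalently (passing to $k[a,b]$ by faithful flatness and using that $k$ is algebraically closed) to the three binary-quadric minors $p_{12}, p_{13}, p_{23}$ having no common zero in $\P^1_k$. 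Since $p_{ij}(s,t) = 0$ precisely when columns $i,j$ of $sA + tB$ are linearly dependent, this is equivalent to $sA + tB$ having rank $2$ for every $[s:t] \in \P^1_k$.

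Two consequences follow. Evaluating at $[1:0]$ and $[0:1]$ gives $\rank A = \rank B = 2$, so $\ker A$ and $\ker B$ are each lines in $k^3$. Moreover $\ker A \neq \ker B$: if they coincided at some $\langle v \rangle$, then $v$ would lie in $\ker(sA + tB)$ for every $(s,t)$, and on the quotient $k^3 / \langle v\rangle \cong k^2$ the induced pencil of $2 \times 2$ matrices would have determinant a nonzero binary quadric in $(s,t)$; this quadric has a root over the algebraically closed field $k$, producing a rank-dropping point and contradicting the rank-$2$ conclusion above.

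I then choose a basis of $k^3$ so that $\ker B = \langle e_1\rangle$ and $\ker A = \langle e_3\rangle$. After this column change of basis, column $1$ of $B$ and column $3$ of $A$ vanish, and the $2 \times 2$ blocks $(A e_1 \mid A e_2)$ and $(B e_2 \mid B e_3)$ are each invertible in $M_2(k)$. A suitable row operation $P \in GL_2(k)$ normalizes $PA$ to $\begin{pmatrix} 1 & 0 & 0 \\ 0 & 1 & 0\end{pmatrix}$, bringing $\mathbf{M}$ to the form
\[
\mathbf{M} \;=\; \begin{pmatrix} a & \beta_{12}\,b & \beta_{13}\,b \\ 0 & a + \beta_{22}\,b & \beta_{23}\,b\end{pmatrix},
\]
where $\beta_{12}\beta_{23} - \beta_{13}\beta_{22} \neq 0$ (from the invertibility of $(B e_2 \mid B e_3)$).

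A direct computation of the three $2\times 2$ minors of this normalized matrix shows that the height condition $\h(I_2(\mathbf{M})) = 2$ forces $\beta_{23} \neq 0$: otherwise $p_{13} = 0$ and $\gcd(p_{12}, p_{23})$ contains the linear form $a + \beta_{22}b$, producing a common zero in $\P^1_k$. The remaining reduction to $\begin{pmatrix} a & b & 0 \\ 0 & a & b\end{pmatrix}$ is then a routine sequence of operations: a row operation $R_1 \to R_1 - (\beta_{13}/\beta_{23})R_2$ clears the $(1,3)$ entry, two column operations eliminate the $a$-term that appears in position $(1,2)$ and the $\beta_{22}b$-term in position $(2,2)$, and a pair of scalings on row $2$ and on columns $2,3$ yields the stated form. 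The main obstacle is the pencil analysis identifying $\ker A$ and $\ker B$ as \emph{distinct} lines in $k^3$; once this is in hand, the subsequent normalization is a matrix calculation in the spirit of Lemma~\ref{matrixlemma}.
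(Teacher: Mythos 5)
Your proof is correct, but it takes a genuinely different route from the paper's. The paper works directly on the matrix of linear forms: from $\h(I_2(\mathbf{M}))=2$ it deduces that the first row generates a height-two ideal, normalizes that row to $(a,b,0)$ by column operations, notes the $(2,3)$ entry must then be nonzero and may be taken to be $b$, and clears the remaining coefficients one at a time, with the height hypothesis reappearing only at the very end to force the $(2,2)$ entry to be a nonzero multiple of $a$. You instead decompose $\mathbf{M}=aA+bB$ and translate $\h(I_2(\mathbf{M}))=2$ into the condition that the pencil $sA+tB$ has rank $2$ at every point of $\P^1_k$; the structural payoff is that $A$ and $B$ each have rank $2$ with $\ker A\neq\ker B$, and a basis of $k^3$ adapted to the two kernels then yields the normal form. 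All the steps check out: the passage to $k[a,b]$ and the common-zero reformulation, the distinct-kernels argument via the induced $2\times 2$ pencil on $k^3/\langle v\rangle$, the forced nonvanishing of $\beta_{23}$, and the final elimination. The trade-off is that the paper's argument is shorter and works over any field, while yours invokes $k=\bar{k}$ twice (common factors versus common zeros in $\P^1_k$, and the root of the determinant of the induced pencil) — harmless here, since algebraic closedness is a standing assumption. In exchange, your pencil viewpoint explains conceptually why the hypotheses pin down a unique normal form (it is the canonical form of a $2\times 3$ pencil of constant rank $2$) and would adapt more readily to larger matrices.
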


\begin{proof} Since $\h(I_2(\mathbf{M})) = 2$, the entries in the first row must generate a height $2$ ideal.  After column operations we may assume that the top-right entry is $0$.  The bottom-right entry then cannot be $0$, since $\h(I_2(\mathbf{M})) = 2$.  After further column operations we may assume $\mathbf{M}$ has the form
\[\mathbf{M} =  \begin{pmatrix} a&b&0\\\alpha a + \beta b& \gamma a + \delta b&b \end{pmatrix}.\]
After more column operations we may force $\beta = \delta = 0$.  After a row operation we can force $\alpha = 0$.  Since $\h(I_2(\mathbf{M})) = 2$, $\gamma \neq 0$.  After a column operation, $\mathbf{M}$ has the desired form.
\end{proof}

\begin{prop}\label{1;2} Let $J$ be $\p$-primary with $e(S/J) = 2$.  Then one of the following holds:
\begin{enumerate}
\item $J = (x,y,z^2)$
\item $J = (x,y^2,yz,z^2,ay+bz)$, where $\h(x,y,z,a,b) = 5$
\item $J = (x,y,z)^2 + (ax+by+cz,dx+ey+fz)$, where $a,b,c,d,e,f \in S_1$, $\h(x,y,z,I_2(\mathbf{M})) \ge 5$, $\h(x,y,z,a,b,c,d,e,f) \ge 6$, and $\mathbf{M}$ is the matrix $\begin{pmatrix}a&b&c\\d&e&f\end{pmatrix}$.
\item All quadrics in $J$ can be written in terms of at most $6$ variables.  
\end{enumerate}
In particular, $\pd(S/J) \le 4$ for cases (1), (2) and (3).
\end{prop}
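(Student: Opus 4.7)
The plan is to classify $J$ by the dimension of $V := J \cap S_1 \subseteq \p_1$, then verify the projective dimension bound in each of cases (1)--(3) via a short exact sequence argument. The backbone observation is that $e(S/J) = \lambda(S_\p/J_\p) = 2$ and $S_\p$ being a regular local ring of dimension $3$ together force $\mm^2 \subseteq J_\p$ (any length-$2$ module over a local ring is annihilated by $\mm^2$), so contracting yields $\p^2 \subseteq J$.

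For the classification, $\dim V = 3$ would give $J = \p$, contradicting $e(S/J) = 2$. If $\dim V = 2$, change coordinates so that $V = (x,y) \subseteq J$; then $J/(x,y)$ is $(\bar z)$-primary of length $2$ in the regular ring $S/(x,y)$, hence equals $(\bar z^2)$, giving case~(1). If $\dim V = 1$, say $x \in J$, then $\bar J := J/(x) \subseteq \bar S := S/(x)$ is $(\bar y, \bar z)$-primary of height $2$ and multiplicity $2$, so Proposition~\ref{h2e2} applies; its first alternative would reintroduce an extra linear form in $J$ (contradicting $\dim V = 1$), so we land in its second alternative, which lifts to case~(2). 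For $\dim V = 0$, choose homogeneous quadrics $g_1, g_2 \in J$ whose images span the $2$-dimensional $k(\p)$-space $J_\p/\mm^2$ and write $g_i \equiv a_ix + b_iy + c_iz \pmod{\p^2}$, forming the matrix $\mathbf M$. If $J = \p^2 + (g_1, g_2)$ and both case-(3) height conditions hold, we are in case~(3); otherwise Lemma~\ref{matrixlemma} applied to $\bar{\mathbf M}$ (when $\h(I_2(\bar{\mathbf M}))\le 1$) or a direct linear-algebra argument (when the nine forms are linearly dependent or $J$ requires extra quadric generators) shows every quadric of $J$ has coefficients in a $6$-dimensional subspace of $S_1$, placing us in case~(4).

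The projective dimension bounds in cases (1) and (2) are routine: case~(1) is a complete intersection of height $3$, and in case~(2) the variable $x$ is a nonzerodivisor on $S$ with $\bar J$ controlled by Proposition~\ref{h2e2}, so $\pd_S(S/J) = \pd_{\bar S}(\bar S/\bar J) + 1 \le 4$. The substantive point is case~(3), which I would handle via
\[
0 \to \p^2 \to J \to J/\p^2 \to 0.
\]
Since $\p \cdot (J/\p^2) = 0$, the quotient is naturally a graded $\bar S = S/\p$-module. Identifying $\p/\p^2 \cong \bar S(-1)^3$ via $\bar x, \bar y, \bar z$, the presentation map $\bar S(-2)^2 \to J/\p^2 \hookrightarrow \bar S(-1)^3$ realized by $\bar{\mathbf M}^T$ is surjective onto $J/\p^2$ by the case-(3) hypothesis $J = \p^2+(g_1,g_2)$, and it is injective because $\h_{\bar S}(I_2(\bar{\mathbf M})) \ge 2$ (equivalent to $\h(x,y,z,I_2(\mathbf M))\ge 5$) makes $\bar{\mathbf M}^T$ of maximal generic rank over the domain $\bar S$. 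Hence $J/\p^2 \cong \bar S(-2)^2$ and $\pd_S(J/\p^2) = \pd_S(S/\p) = 3$; since $S/\p^2$ is Cohen-Macaulay of codimension $3$, one has $\pd_S(\p^2) = 2$. The long exact sequence yields $\pd_S(J) \le \max\{2,3\} = 3$, and therefore $\pd_S(S/J) \le 4$.

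The hard part will be the bookkeeping in the $\dim V = 0$ case: confirming that each failure mode of the case-(3) hypotheses genuinely collapses all quadrics of $J$ into $\le 6$ variables. The three normal forms of $\bar{\mathbf M}$ furnished by Lemma~\ref{matrixlemma}, the spanning condition on $\{x,y,z,a,b,c,d,e,f\}$, and the possibility of additional minimal quadric generators of $J$ beyond $g_1, g_2$ must each be examined in turn. In contrast, the projective dimension bound for case~(3) is a clean consequence of the displayed short exact sequence once the injectivity of $\bar{\mathbf M}^T$ is in hand.
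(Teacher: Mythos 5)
Your overall architecture (split by how many independent linear forms $J$ contains, quote Proposition~\ref{h2e2} when there is one, analyze the matrix $\mathbf{M}$ when there are none) matches the paper's, and your short-exact-sequence computation $0 \to \p^2 \to J \to J/\p^2 \to 0$ with $J/\p^2 \cong (S/\p)(-2)^2$ is a correct and genuinely different route to $\pd(S/J)\le 4$ in case (3) --- the paper instead writes down an explicit length-$4$ resolution of $J$ in Lemma~\ref{primary21}. But there is a gap in your classification, and it sits exactly where you hedge with ``\emph{if} $J = \p^2+(g_1,g_2)$.'' Nothing in your argument shows that the height hypotheses of case (3) force $J$ to \emph{equal} $\p^2+(g_1,g_2)$ rather than properly contain it. A priori the localizations agree, $(\p^2+(g_1,g_2))_\p = J_\p$, so $J$ is the $\p$-primary component of $K:=\p^2+(g_1,g_2)$; one has $J=K$ if and only if $K$ is unmixed. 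If $K$ had an embedded prime, then $J \supsetneq K$, and such a $J$ need not land in any of the four cases: its degree-two part could still require all nine linear forms (so not case (4)), while $J \ne K$ rules out case (3) as stated. Your own bound $\pd(S/K)\le 4$ does not close this, since $\pd = 4$ is compatible with an embedded prime of height $4$. This is precisely why the paper proves Lemma~\ref{primary21} (and~\ref{primary22}) in the appendix: the Buchsbaum--Eisenbud criterion is used not only for acyclicity but with the strengthened condition $\h(I_{r_j}(\partial_j)) \ge j+1$ for $j>3$ to certify unmixedness, after which $e(S/K)=e(S/J)=2$ and Lemma~\ref{unmixedequal} give $K=J$.

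The same issue recurs in your ``otherwise'' branch. When $\h(I_2(\bar{\mathbf M}))\le 1$, the three normal forms of Lemma~\ref{matrixlemma} do not all lead to case (4): in the paper's treatment, forms (1) and (2) produce a linear form in $J$ (contradicting $\dim V=0$), and only form (3) survives, yielding the extra quadric $dy-az\in J$; even then, concluding that \emph{all} quadrics of $J$ lie in $k[x,y,z,a,b,d]$ requires identifying $J$ exactly as $(x,y,z)^2+(ax+by,dx+bz,dy-az)$, which again rests on an unmixedness verification (Lemma~\ref{primary22}). A ``direct linear-algebra argument'' cannot substitute for this, because without unmixedness $J$ may contain quadrics beyond the candidate ideal. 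So the missing ingredient throughout is the appendix-style certification that each candidate unmixed ideal of multiplicity $2$ really is unmixed; once you add that (or import Lemmas~\ref{primary21} and~\ref{primary22}), your argument goes through, and your homological computation for case (3) is a clean alternative to the explicit resolution.
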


\begin{proof} Since $e(S/J) = 2$, we have $(x,y,z)^2 \subseteq J \subseteq (x,y,z)$.  If $J$ contains a linear form, which we may take to be $x$, then $J = (x) + J^{\prime}$, where $J^{\prime}$ is $(y,z)$-primary and $e(S/J^{\prime}) = 2$.  By Lemma~\ref{h2e2}, either $J = (x,y,z^2)$ or $J = (x, y^2, yz, z^2, ay + bz)$, where $\h(x,y,z,a,b) = 5$.  

Suppose $J$ does not contain a linear form.  Since $e(S/(x,y,z)^2) = 4$, $J$ must contain at least $2$ more generators of the form $ax + by + cz, dx + ey + fz$ not contained in $(x,y,z)^2$.  We may assume that both such generators are linearly independent and degree $2$ or else we are in case (4).  By Lemma~\ref{primary21}, the ideal $(x,y,z)^2 + (ax + by + cz, dx + ey + fz)$ is unmixed and has multiplicity $2$, and hence equal to $J$, if $\h(x,y,z,I_2) \ge 5$, where $I_2$ denotes the $2\times 2$ minors of the matrix $\mathbf{M} = \begin{pmatrix}a&b&c\\d&e&f\end{pmatrix}$.  So we may assume that $\h(I_2) = 1$ modulo $(x,y,z)$.  Hence, after a linear change of variables and a linear combination of generators, we may assume that $\mathbf{M}$ has one of the three forms in Lemma~\ref{matrixlemma} modulo $(x,y,z)$.  In case (1), $ax \in J$.  Since $a \notin \p$ and since $J$ is $\p$-primary, $x \in J$ contradicting that $J$ does not contain a linear form.  In case (2), we see that $(ae-db)y \in J$  and $ae-db \notin \p$ and hence $y \in J$, also a contradiction.  In the remaining case (3), we see that
\[b(dy - az) =  d(ax+by)-a(dx + bz) \in J.\]
Since $b \notin \p$, we have $dy - az \in J$ and $\h(x,y,z,a,b,d) = 6$.  By Lemma~\ref{primary22} the ideal $(x,y,z)^2 + (ax+by, dx+bz, dy - az)$ is unmixed of multiplicity $2$ and equal to $J$.  All quadrics in this ideal can be written in terms of the $6$ variables $x,y,z,a,b,d$ leaving us in case (4). This completes the proof.\end{proof}

\begin{prop}\label{1;3} Let $J$ be $\p$-primary with $e(S/J) = 3$.  Then one of the following holds:
\begin{enumerate}
\item $J = (x,y^2,yz,z^2)$. 
\item $J = (x,y^2,yz,z^3,ay+z^2)$, where $a \in S_1$ and $\h(x,y,z,a) = 4$. 
\item $J = (x^2,xy,xz,y^2,yz,ax+by+z^2,cx+dy)$, where $a,b,c,d \in S_1$ and $\h(x,y,z,c,d) = 5$.
\item $J = (x^2,xy,xz,yz^2,z^3,ax+by+cz,dx+y^2)$, where $a,b,c,d \in S_1$ and $\h(x,y,z,b,c) = \h(x,y,z,c,d) = 5$.
\item $J = (x^2,xy,xz,y^3,z^3,ax+by+cz,dx+yz)$, where $a,b,c,d \in S_1$ and $\h(x,y,z,b,c) = \h(x,y,z,b,d) = \h(x,y,z,c,d) = 5$. 
\item All quadrics in $J$ can be written in terms of at most $6$ variables.  
\item All quadrics in $J$ generate an ideal of height at most $2$. 
\end{enumerate}
In particular, $\pd(S/J) \le 4$ for cases {\rm(1) - (5)}.
\end{prop}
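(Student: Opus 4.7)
The plan is to mimic the proof of Proposition~\ref{1;2}, splitting into cases based on $\dim_k(J \cap S_1)$, the space of linear forms in $J$ (all of which necessarily lie in $\p_1 = \langle x, y, z \rangle_k$ since $J \subseteq \p$).

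First, if $\dim_k(J \cap S_1) \ge 2$, after a change of coordinates on $\p_1$ we may assume $x, y \in J$. Then $J/(x,y)$ is a $(z)$-primary ideal of length three in $S/(x,y)$, forcing $J = (x, y, z^3)$; the $S$-ideal generated by the quadrics in $J$ is $(x, y)\m$, of height two, placing $J$ in case (7).

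Next, if $\dim_k(J \cap S_1) = 1$ with $x \in J$, write $J = (x) + \tilde{J}$ and reduce modulo $x$ to obtain a $(y,z)$-primary ideal $J'$ in $S/(x)$ of multiplicity three without linear generators. The Artinian local ring $A = (S/(x))_{(y,z)}/J'_{(y,z)}$ has length three, hence is isomorphic to $K[u,v]/(u,v)^2$ or $K[t]/(t^3)$. In the embedding-dimension-two case, $(y,z)^2 \subseteq J'_{(y,z)}$, so equality of multiplicities together with uniqueness of primary decomposition gives $J' = (y^2, yz, z^2)$, recovering case (1). In the embedding-dimension-one case, $\m_A$ is principal; after possibly swapping $y$ and $z$, $\bar y \in \m_A^2$, and homogeneity forces $J'$ to contain a minimal generator of the form $ay + z^2$ with $a \in S_1$ and $\h(x, y, z, a) = 4$. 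Completing the $(y,z)$-primary component forces $y^2, yz, z^3 \in J'$, recovering case (2).

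Finally, if $J \cap S_1 = 0$, the reductions in the local ring $A = S_\p/J_\p$ that were produced by linear forms in the previous case must now arise from quadric generators. Choose a minimal set of degree-two generators of $J$ modulo $\p^2$ of the form $L_i x + M_i y + N_i z$ with $L_i, M_i, N_i \in S_1$, and let $\mathbf{M}$ be the resulting matrix of linear forms. Applying Lemma~\ref{matrixlemma} or Lemma~\ref{matrixlemma2} to normalize $\mathbf{M}$, and splitting based on which of the pure quadrics $x^2, xy, xz, y^2, yz, z^2$ lie in $J$, the ``missing'' pure quadric (if any) dictates the case: $z^2$ missing gives case (3), $y^2$ missing gives case (4), and $yz$ missing gives case (5), while degenerate configurations whose quadrics live in six variables or generate a height-$\le 2$ ideal give case (6) or (7).

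The main obstacle is the exhaustive subdivision in the no-linear-form case; each normal form of $\mathbf M$ paired with each configuration of pure quadrics must be verified to yield a primary ideal of multiplicity exactly three, using the primary-ideal lemmas of Appendix~\ref{append} (analogues of the lemmas used in the proof of Proposition~\ref{1;2}). For the bound $\pd(S/J) \le 4$ in cases (1)--(5), cases (1) and (2) reduce immediately to a polynomial subring in at most four variables, giving $\pd \le 4$ by Auslander--Buchsbaum. Cases (3)--(5) require a more delicate argument: one exhibits a complete intersection $C \subseteq J$ of three quadrics, verifies via Theorem~\ref{linkage} that the link $C : J$ is Cohen--Macaulay (or has $\pd \le 3$), and concludes via Lemma~\ref{ses} that $\pd(S/J) \le 4$.
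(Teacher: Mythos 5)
There is a genuine gap, and it sits exactly where the real work of this proposition lies: the case $J\cap S_1=0$. Your organizing principle there --- normalize the matrix $\mathbf M$ of linear coefficients of the degree-two generators and then "split on which pure quadric is missing" --- does not match the actual structure of cases (3)--(5) and would not produce them. In case (4) the generators are $x^2,xy,xz,yz^2,z^3,ax+by+cz,dx+y^2$ and in case (5) they are $x^2,xy,xz,y^3,z^3,ax+by+cz,dx+yz$: in both, \emph{all three} of $y^2,yz,z^2$ are absent as pure quadrics, so "$y^2$ missing gives (4), $yz$ missing gives (5)" is not a well-defined dichotomy; the two cases are distinguished by whether the second mixed quadric is $dx+y^2$ or $dx+yz$, and the height conditions $\h(x,y,z,b,c)=\h(x,y,z,c,d)=5$, etc., have to come from somewhere. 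The paper's mechanism for all of this is the colon ideal $J:\p$, which is $\p$-primary of multiplicity $1$ or $2$ and hence is classified by Proposition~\ref{1;2}; the containments $\p\cdot(J:\p)\subseteq J\subseteq J:\p$ then pin down which quadrics and cubics must lie in $J$, a multiplicity count tells you how many further generators are needed, and the Appendix lemmas certify that each candidate is unmixed of multiplicity $3$ (so equals $J$ by Lemma~\ref{unmixedequal}). Cases (4) and (5), for instance, arise precisely from $J:\p=(x,y^2,yz,z^2,ay+bz)$, information your setup never extracts. Without $J:\p$ you only know $\p^3\subseteq J$ and the local Hilbert function, which is not enough to run the argument as you describe it; your "main obstacle" paragraph is an acknowledgment that the case analysis remains to be done, not a proof of it.

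Two smaller points. In the one-linear-form case you assert the extra generator $ay+z^2$ has $a\in S_1$; if the element witnessing $\bar y\in\m_A^2$ has coefficient of degree $\ge 2$, you must instead fall into case (7) (the paper flags this explicitly), so the escape hatch is needed, not optional. For the bound $\pd(S/J)\le 4$ in cases (3)--(5), your linkage proposal is plausible but unnecessary: the paper reads these bounds off the explicit finite free resolutions in Lemmas~\ref{primary33}, \ref{primary35}, \ref{primary36}, and in any case you would still have to verify Cohen--Macaulayness of the link, which is no easier than exhibiting the resolution.
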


\begin{proof} As $e(S/J) = 3$, $e(S/(J:\p)) = 1$ or $2$.

If $e(S/(J:\p)) = 1$, then $J:\p = (x,y,z)$ and $(x,y,z)^2 \subseteq J$.  If $J$ contains two linear forms, then an ideal of the form $(x,y,z^2)$ is contained in $J$, contradicting that $e(S/J) = 3$.  If $J$ contains a linear form, then $J$ must be $(x,y^2,yz,z^2)$.  If $J$ does not contain any linear forms, then $J$ must contain at least one more generator of the form $q=ax + by + cz$, which we may assume is degree $2$ (or else we are in case (6)).  If $\h(x,y,z,a,b,c) \ge 5$, then $(x,y,z)^2 + (ax+by+cz)$ is unmixed of multiplicity $3$ by Lemma~\ref{primary31} and then equals $J$.  This ideal fits into case (6) as well.  If $\h(x,y,z,a,b,c) = 4$ then we can modify $b,c$ modulo $(x,y,z)$ to assume $b$ and $c$ are multiple of $a$. Then $q=a\ell \in J$ for some linear form $\ell$ and then $J$ contains a linear form, $\ell$, a contradiction.

If $e(S/(J:\p)) = 2$, then by the previous proposition there are $4$ cases to consider:

\noindent \underline{Case I. $J:\p = (x,y,z^2)$}\\
In this case $(x^2, xy, xz, y^2, yz, z^3) \subseteq J \subseteq (x,y,z^2)$.  If $J$ contains $2$ independent linear forms, then $J = (x,y,z^3)$.  If $J$ contains exactly one linear form up to scalar multiplication, we may assume this form to be $x$, thus we have $(x, y^2, yz, z^3) \subseteq J$.  The ideal on the left has multiplicity $4$, so we need at least one additional generator of the form $ay + z^2$, which we may assume is a quadric, or else we are in case (7).  By Lemma~\ref{primary32}, $(x,y^2,yz,z^3,ay+z^2)$ is unmixed of multiplicity $3$ and so is equal to $J$.  If $\h(x,y,z,a) = 4$, then we are in case (2); if $a \in (x,y,z)$, then we are in case (1).  Finally we assume that $J$ does not contain a linear form.  We may assume that $J$ has two additional quadric generators of the form $ax + by + z^2, cx + dy$. Indeed, if two linearly independent quadrics are of the form $ax +by, cx + dy$, then $(ad-bc)y \in J$. Since $J$ contains no linear forms, we have $ad-bc\in \p$. Since $\p$ is a linear prime, then $S/\p$ is a UFD, then either one of $a, b, c, d \in \p$ or $a\equiv b$ and $c\equiv d$ modulo $\p$, or else $a\equiv c$ and $b\equiv d$ modulo $\p$. In the first case, we get a linear form in $J$.  In the latter case, after modifying $a,b,c,d$ modulo $\p$ we would get that $ax+by$ can be assumed to be a scalar multiple of $cx+dy$, which contradicts their linear independence. In the second case, after modifying $a,b,c,d$ modulo $\p$ we would obtain $a\ell \in J$ and $c\ell'\in J$ for linear forms $\ell, \ell'\in \p$. Since $a\notin \p$ (otherwise one of the generators in not minimal), then $\ell \in J$, giving a contradiction.

Finally, if $\h(x,y,z,c,d) = 5$, the ideal $(x^2, xy, xz, y^2, yz, ax+by+z^2, cx+dy)$ is equal to $J$ by Lemma~\ref{primary33}; otherwise, $c$ and $d$ have a common factor modulo $\p$ and so $J$ contains a linear form - a contradiction.

\noindent \underline{Case II. $J:\p = (x,y^2,yz,z^2,ay+bz)$}.\\
If $x \in J$, then $(x, y^3, y^2z, yz^2, z^3, y(ay+bz), z(ay+bz)) \subseteq J$ and there is at least one additional generator of the form $\alpha y^2 + \beta yz + \gamma z^2 + \delta(ay+bz)$, which we may assume is a quadric or else we are in case (7).  If $\delta = 0$ (which includes the case $\deg(a) = \deg(b) \ge 2$), then one checks that $(x,y,z)^2_\p \subseteq J_\p$ and hence $(x,y^2,yz,z^2) \subseteq J$ since $J$ is $\p$-primary.   S these ideals have the same multiplicity, $J= (x,y^2,yz,z^2)$.  This contradicts that $J:\p = (x,y^2,yz,z^2,ay+bz)$.  If $\delta \neq 0$ we may assume $\delta = 1$; then by a linear change of variables ($a' = a + \alpha y + \beta z$, $b' = b + \gamma z$), we may assume our additional quadric has the form $ay + bz$.  Then $J = (x, ay+bz, y^3, y^2z, yz^2,z^3)$ by Lemma~\ref{primary34}.  This ideal falls into case (7) as well.

If $x \notin J$, then $(x^2,xy,xz,y^3,y^2z,yz^2,z^3,y(ay+bz),z(ay+bz)) \subseteq J$.  There are at least $2$ additional quadric generators of the form $cx + \alpha y^2 + \beta yz + \gamma z^2 + \delta(ay + bz), dx + \epsilon y^2 + \zeta yz + \eta z^2$, where $c,d \in S_1$ and the remaining coefficients are in $K$, since otherwise we are in case (6).  If $\delta = 0$ for all such quadrics, then either there are at most $3$ such quadrics putting us in case (6), or else there is a quadric of the form $ex \in J$ with $e \notin (x,y,z)$.  It follows that $x \in J$, a contradiction.  Hence we may assume that $\delta = 1$  and that $\deg(a) = \deg(b) = 1$.  After a linear change of variables ($a' = a + \alpha y + \beta z$ and $b' = b + \gamma z$), we can assume that $\alpha = \beta = \gamma = 0$.  After a further linear change of variables we can assume that the second quadric has the form $dx + y^2$ or $dx + yz$.  (If $\epsilon = \zeta = \eta = 0$, then $x \in J$, contradicting that $J$ did not contain a linear form.)  Recall that we have $\h(x,y,z,a,b) = 5$. 

 Suppose the second quadric is $dx + y^2$.  If $\h(x,y,z,b,d) = 5$, then $J = (x^2,xy,xz,yz^2,z^3,cx+ay+bz,dx+y^2)$ by Lemma~\ref{primary35} putting us in case (4).  If $\h(x,y,z,b,d) = 4$, then after rewriting we may assume $d = 0$ or $d = b$.  If $d = 0$, then one checks that $J = (x,y,z)^2 + (cx+ay+bz)$, which is unmixed by Lemma~\ref{primary31}, putting us in case (6).  If $d = b$, then 
\[b(ax-yz) = a(bx+y^2) - y(cx+ay+bz)+cxy \in J,\]
and hence $ax - yz \in J$.  By Lemma~\ref{primary37} that $J = (x^2,xy,xz,z^3,bx+y^2,ax-yz,cx+ay+bz)$.  Again this falls into case (6). 

The case where the second quadric is $dx + yz$ is similar.  If $\h(x,y,z,a,d) = \h(x,y,z,b,d) = 5$, then $J = (x^2, xy, xz, y^3, z^3, cx+ay+bz, dx+yz)$ by Lemma~\ref{primary36}.  Otherwise we may assume one of the following: $d = a$, $d = b$ or $d = 0$.  If $d = a$ or $d = b$, then $J$ has the form of Lemma~\ref{primary37}.  If $d = 0$, then $x^2, y^2 \in J$ and $J$ has the form from Lemma~\ref{primary31}.  Both cases put us in case (6).

\noindent \underline{Case III. $J:\p = (x,y,z)^2+(ax+by+cz,dx+ey+fz)$}.\\
We have that $(x,y,z)^3 + (x,y,z)(ax+by+cz,dx+ey+fz) \subseteq J$, where $\h(x,y,z,I_2(\mathbf{M})) = 5$.  The ideal on the left has multiplicity $5$, so there are at least two additional quadric generators which, after a linear change of variables, we can take to be of the form $ax+by+cz + q, dx+ey+fz + q'$, where $q, q' \in (x,y,z)^2$.  (If there are not at least $2$ such quadrics, then all quadrics in $J$ can be written in terms of at most $6$ variables.)  After relabeling $a,\ldots,f$ we can assume $q, q' = 0$ without changing $\h(I_2(\mathbf{M}))$ or $\h(I_1(\mathbf{M}))$.  Hence $(x,y,z)^3 + (ax+by+cz, dx+ey+fz) \subseteq J$.  If $\h(x,y,z,I_2(\mathbf{M})) \ge 5$, where 
\[\mathbf{M} = \begin{pmatrix} a&b&c\\d&e&f \end{pmatrix},\]
then $\h(x,y,z,a,b,c,d,e,f) \ge 5$.  If in addition $\h(x,y,z,a,b,c,d,e,f) \ge 6$, then this ideal equals $J$ by Lemma~\ref{primary38}.  This ideal falls into case (7).  If $\h(x,y,z,a,b,c,d,e,f) = 5$, then by Lemma~\ref{matrixlemma2} we may take $\mathbf{M}$ to be of the form
\[\mathbf{M} = \begin{pmatrix} a&b&0\\0&a&b \end{pmatrix},\]
modulo $(x,y,z)$.  In other words, $ax + by + q, ay + bz + q' \in J$, where $q, q' \in (x,y,z)^2$.
In this case
\[a(xz - y^2) = z(ax+by + q) - y(ay+bz + q') - zq + yq'.\]
Since $(x,y,z)^3 \subseteq J$, we have $a(xz-y^2) \in J$.  Since $a \notin (x,y,z)$ and $J$ is $(x,y,z)$-primary, $xz - y^2 \in J$.  Now $J = (x,y,z)^3 + (ax+by, ay+bz, xz + y^2)$ by Lemma~\ref{primary310}.  This ideal falls into case (6).

\noindent \underline{Case IV. Quadrics in $J:\p$ can be written in terms of $6$ variables}.\\
Since $J \subseteq J:\p$, all quadrics in $J$ can be written in terms of $6$ variables, putting us in case (6).
\end{proof}

\begin{prop}\label{1;4} Let $J$ be $\p$-primary with $e(S/J) = 4$.  Then at least one of the following holds:
\begin{enumerate}
\item All quadrics in $J$ can be written in terms of at most $6$ variables.
\item All quadrics in $J$ generate an ideal of height at most $2$.
\item $J$ contains a linear form.
\item $\pd(S/J) \le 5$.
\end{enumerate}
\end{prop}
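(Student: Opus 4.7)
The plan is to follow the template set by Propositions~\ref{1;2} and \ref{1;3}. Assuming that none of (1), (2), (3) holds---so $J$ contains no linear form, the quadrics in $J$ generate an ideal of height three, and at least seven variables are needed to express them---I aim to show $\pd(S/J) \le 5$.

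The central step is to pass to $K := J:\p$, which is $\p$-primary, strictly contains $J$, and satisfies $e(S/K) \in \{1,2,3\}$. When $e(S/K) = 1$, one has $K = \p$, forcing $\p^2 \subseteq J$; then Lemma~\ref{unmixedequal} yields $J = \p^2$, whose quadrics use only three variables---contradicting the reduction. When $e(S/K) \in \{2,3\}$, Propositions~\ref{1;2} and \ref{1;3} classify $K$. Whenever $K$ itself lies in the "six variables" or "height-two quadrics" output of its classification, that conclusion is inherited by $J \subseteq K$ and again contradicts the reduction. What remains is a finite list of explicit structures for $K$, each with $\pd(S/K) \le 4$.

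For each such explicit $K$, I analyze $J$ through the short exact sequence
\[0 \to K/J \to S/J \to S/K \to 0.\]
Since $\p K \subseteq J$, the module $K/J$ is an $S/\p$-module of length at most three at $\p$, so its $S$-free resolution is obtained by lifting a presentation over $S/\p$ and tensoring against the Koszul complex on $x, y, z$. For each surviving form of $K$, an appendix lemma should then pin down the possible $J$'s---as $\p K$ together with at most three additional quadric generators lifting a $k(\p)$-basis of $(K/J)_\p$---and either give a direct free-resolution computation of $\pd(S/J)$ or allow a reduction via Lemma~\ref{ses} applied to a complete intersection of three quadrics inside $J$.

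The main obstacle is the proliferation of subcases: roughly eight explicit forms of $K$ survive the reductions (three from Proposition~\ref{1;2} and five from Proposition~\ref{1;3}), each demanding its own appendix lemma. The most delicate are those inherited from Proposition~\ref{1;3}(3)--(5), where the generators of $K$ beyond $\p^2$ sit on a $2\times 3$ matrix of linear forms; controlling which additional quadrics can be adjoined without pushing $J$ back into cases (1), (2), or (3), while still producing a tractable syzygy module, is the core technical difficulty.
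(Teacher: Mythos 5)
Your proposal matches the paper's proof in all essentials: assume (1)--(3) fail, pass to $J:\p$, run through the classifications of Propositions~\ref{1;2} and \ref{1;3} according to $e(S/(J:\p))\in\{1,2,3\}$, and finish the handful of surviving explicit forms one at a time. The paper closes each case not via the sequence $0 \to K/J \to S/J \to S/K \to 0$ but by showing that $J$ must contain an explicit candidate ideal that is unmixed of multiplicity $4$ (hence equals $J$ by Lemma~\ref{unmixedequal}) with projective dimension $4$ by the appendix lemmas~\ref{primary42}--\ref{primary44}, or else by deriving a contradiction (a linear form in $J$, or $e(S/J)\le 3$); this is the same case-by-case computation you anticipate, just without the $K/J$ resolution device.
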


\begin{proof} We can assume $J$ does not contain a linear form, since otherwise we are in case (3).
The ideal $J:\p$ is $\p$-primary and $e(S/(J:\p))$ must be $1$, $2$ or $3$. \\
\noindent \underline{Case I. $e(S/(J:\p)) = 1$}.\\
In this case $J:\p = (x,y,z)$ and hence $J = (x,y,z)^2$.  This fall into case (1).  \\
\noindent \underline{Case II. $e(S/(J:\p)) = 2$}.\\
There are $3$ subcases to consider corresponding to Lemma~\ref{1;2}. \\
\noindent \underline{Case II.A. $J:\p = (x,y,z^2)$}.\\
We have $(x^2,xy,xz,y^2,yz,z^3) \subseteq J\subseteq J:\p$.  There is another generator which we may assume is a quadric of the form $ax + by + \alpha z^2$, where $a, b \in S_1$ and $\alpha \in K$.  We may further assume that $\alpha = 1$, otherwise all quadrics in $J$ are contained in $(x,y)$, putting us in case (2).  This ideal is then unmixed of multiplicity $4$ by Lemma~\ref{primary42} and so equal to $J$.  Note that all quadrics in $J$ can be written in terms of the linear forms $x,y,z,a,b$.\\
\noindent \underline{Case II.B. $J:\p = (x,y^2,yz,z^2,ay+bz)$, where $\h(x,y,z,a,b) = 5$}.\\
Here we have $(x^2,xy,xz,y^3,y^2z,yz^2,z^3,y(ay+bz),z(ay+bz)) \subseteq J\subseteq J:\p$.  We may assume $J$ contains a quadric of the form $cx + \alpha y^2 + \beta yz + \gamma z^2 + (ay+bz)$.   After a linear change of variables we may assume (without changing $\h(x,y,z,a,b)$) that the quadric has the form $cx + ay + bz$.  By assumption $\h(x,y,z,a,b) = 5$, so this ideal is unmixed and equal to $J$ by Lemma~\ref{primary43}.  All quadrics in $J$ can be written in terms of  the linear forms $x,y,z,a,b,c$ putting us in case (1).\\
\noindent \underline{Case II.C. $J:\p = (x,y,z)^2 + (ax+by+cz,dx+ey+fz)$, where}\\\underline{ $\h(x,y,z,I_2(\mathbf{M})) = 5$}.\\
Since $(x,y,z)^3 + (x,y,z)(ax+by+cz,dx+ey+fz) \subseteq J\subseteq J:\p$, it is easy to see that either all quadrics in $J$ can be written in terms of at most $6$ variables, or else, after relabeling the coefficients (and without affecting $\h(x,y,z,I_2(\mathbf{M}))$), we may assume $ax+by+cz,dx+ey+fz \subseteq J$. These two forms are linearly independent elements of $\p_{\p}\setminus \p_{\p}^2$.    Since $(x,y,z)^3 \subseteq J$, we have  $HF_{S_{\p}/J_{\p}} = (1,1,1,0,\ldots)$; in particular $e(S/J) \leq 3$, contradicting that $e(S/J) = 4$.\\
\noindent \underline{Case III. $e(S/(J:\p)) = 3$}.\\
In this case $J:\p$ is one of cases (1) - (7) from the previous proposition.  We are done if $J:\p$ is as in cases (6) or (7) or if $J$ contains a linear form by Lemma~\ref{LinForm}.  \\
\noindent \underline{Case III.A. $J:\p = (x,y^2,yz,z^2)$}.\\
In this case $(x^2,xy,xz,y^3,y^2z,yz^2,z^3) \subseteq J\subseteq J:\p$ and $J$ contains additional generators of the form $ax + \alpha y^2 + \beta yz + \gamma z^2$ where $a \in S_1 - \p$.  Either all quadrics in $J$ can be written in terms of at most $6$ variables, or there are at least $4$ such generators with linearly independent $x$-coefficients.  After taking a linear combination of these $4$ generators, we have $a'x \in J$ for some $a'\notin \p$ and hence $x \in J$, contradicting that $J$ does not contain a linear form.\\
\noindent \underline{Case III.B. $J:\p = (x,y^2,yz,z^3,ay+z^2)$, where $\h(x,y,z,a) = 4$}.\\
We have $(x^2,xy,xz,y^3,y^2z,yz^2,ayz+z^3) \subseteq J\subseteq J:\p$. We may assume that $J$ has additional quadric generators of the form $b_ix + \alpha_i yz + \beta_i y^2 +\gamma_i (ay+z^2)$.  We have three possibilities to consider:
\begin{enumerate}
\item $\h(x,y,z,a,b_1,b_2,\ldots) \le 6$, in which case all quadrics in $J$ can be expressed in terms of at most $6$ variables.
\item One of extra quadric generators has the form $bx$, with $b \notin (x,y,z)$.  Since $J$ is $(x,y,z)$-primary, it follows that $J$ contains the linear form $x$, a contradiction.
\item If neither of the above hold, we must have exactly $3$ additional quadrics.  If there are fewer than $3$ such quadrics, we are in case (1).  If there are more than $3$ then after taking linear combinations of the generators we can produce a generator of the form $bx$ and are in case (2).  Moreover, after taking linear combinations of these $3$ quadrics, we may assume they have the following form: $b_1x + ay+z^2, b_2x+yz, b_3x + y^2$ with $\h(x,y,z,a,b_1,b_2,b_3) = 7$.  In this case, one notes that
\[ab_3x = a(b_3x+y^2) - y(b_1x+ay+z^2) + b_1xy + yz^2 \in J.\]
As $ab_3 \notin (x,y,z)$, we again have the linear form $x \in J$.
\end{enumerate}
\noindent \underline{Case III.C. $J:\p = (x^2,xy,xz,y^2,yz,ax+by+z^2,cx+dy)$, where}\\\underline{$\h(x,y,z,c,d) = 5$}.\\
We have $(x,y,z)(x^2,xy,xz,y^2,yz,ax+by+z^2,cx+dy) \subseteq J\subseteq J:\p$.  If it is not the case that all quadrics in $J$ are expressible in terms of at most $6$ variables, then after a relabeling of linear forms, we may assume that $ax+by+z^2, cx+dy \in J$.  One checks then that $(ad-bc)(x^2,xy,y^2) \subseteq J$.  If $ad-bc \in \p$, then ${\rm ht}(x,y,z,a,b,c,d) \le 6$ putting us in case (6).  If $ad-bc\notin \p$, then since $J$ is $\p$-primary, we have $J_1=(x^2,xy,y^2,ax+by+z^2,cx+dy) \subseteq J$.  This ideal is unmixed of multiplicity $4$ by Lemma~\ref{primary44}, $J=J_1$ and we have $\pd(S/J) = 4$.\\
\noindent \underline{Case III.D. $J:\p = (x^2,xy,xz,yz^2,z^3,cx+ay+bz,dx+y^2)$, where}\\\underline{$\h(x,y,z,b,c) =  \h(x,y,z,c,d) = 5$}.\\
Once again either we are in case (6) or we may assume $cx+ay+bz,dx+y^2 \subseteq J$ and ${\rm ht}(x,y,z,a,b,c,d) = 7$.  Since $dx^2 = x(dx+y^2) - y(xy) \in J$ and $d \not\in \p$, we also have $x^2 \in J$. Then 
$J_1=(x^2, xz^2, xy^2, y^4, z^4, dx+yz, cx+ay+bz)\subseteq J\subseteq J:\p$. One checks that $J_1$ is unmixed of multiplicity $4$ and hence equal to $J$ and that $\pd(S/J) = 4$. \\
\noindent \underline{Case III.E. $J:\p = (x^2,xy,xz,y^3,z^3,cx+ay+bz,dx+yz)$},where\\\underline{$\h(x,y,z,b,c) = \h(x,y,z,b,d) =  \h(x,y,z,c,d) = 5$}.\\
This case is nearly identical to the previous case, so we leave the proof to the reader.

This completes the proof.
\end{proof}

\section{$\langle2;2\rangle$ structures}\label{22struct}

 The section contains our characterization of certain $\q$-primary ideals, where $\q=(x,y,q)$ is a height three prime generated by $2$ linear forms $x,y$ and a quadric form $q$.  For our purposes, we need to characterize the possible $\langle 2;2\rangle$ structures; that is, $\q$-primary ideals $J$ with $e(S/J) = 4$.  
  
\begin{prop}\label{st2;2}
Let $\q=(x,y,q)$ be a height $3$ multiplicity $2$ prime and let $J$ be a $\q$-primary ideal with $e(S/J) = 4$. Further assume that $J$ contains a height $3$ ideal generated by quadrics.  Then $J$ has one of the following forms:
\begin{enumerate}
\item $J=(x,y^2,q),$ 
\item $J=(x^2,xy,y^2,q) + L',$ where $L' $ is generated in degrees $\ge 3$,
\item $J=(x^2,xy,y^2,ax+by,q) + L'$, where $a,b \in S_1$, ${\rm ht}(x,y,a,b,q) = 5$ and $L' $ is generated in degrees $\ge 3$,
\item $J=(x^2, xy, y^2, ax+by, cx+dy, ad-bc+ex+fy = q),$ where $a,b,c,d,e,f \in S_1$, ${\rm ht}(x,y,a,b) = {\rm ht}(x,y,c,d) = 4$ and ${\rm ht}(x,y,ad - bc) = 3$.
\end{enumerate}
\end{prop}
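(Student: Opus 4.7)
The plan is to exploit that $\q = (x, y, q)$ is a height-$3$ complete intersection to control $J$ tightly via a length argument, then split on whether $J$ contains a linear form.

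First, Proposition~\ref{Ass} together with $e(S/\q) = 2$ and $e(S/J) = 4$ gives $\lambda(S_\q/J_\q) = 2$; since $S_\q$ is a three-dimensional regular local ring, the length-one module $\q_\q/J_\q$ is killed by $\q_\q$, so $\q^2 \subseteq J$. The hypothesis that $J$ contains a height-$3$ ideal generated by quadrics implies $J_2 \not\subseteq (x,y) \cap S_2$, so $J$ contains a quadric of the form $q + ax + by$ for some $a, b \in S_1$; replacing $q$ by this quadric (which preserves $\q$), I may assume $q \in J$.

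Suppose $J$ contains a linear form. After a coordinate change in $\mathrm{span}_k(x,y)$, assume $x \in J$. Combined with $q \in J$ and $y^2 \in \q^2 \subseteq J$, this gives $(x, y^2, q) \subseteq J$; since $x, q, y$ is a regular sequence, so is $x, q, y^2$, with $e(S/(x, y^2, q)) = 1 \cdot 2 \cdot 2 = 4 = e(S/J)$. Lemma~\ref{unmixedequal} then forces $J = (x, y^2, q)$, which is case (1).

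Suppose $J$ contains no linear form. Then $J_2$ contains $\mathrm{span}\{x^2, xy, y^2, q\}$, and any further minimal quadric generator has the form $a_i x + b_i y$ with $a_i, b_i \in S_1$, for $i = 1, \ldots, m$. For each pair $i \neq j$, the adjugate identity applied to the $2 \times 2$ matrix $\left(\begin{smallmatrix} a_i & b_i \\ a_j & b_j \end{smallmatrix}\right)$ gives $(a_i b_j - a_j b_i)(x, y) \subseteq J$; since $J$ is $\q$-primary and $x, y \notin J$, we obtain $a_i b_j - a_j b_i \in J : (x, y) \subseteq \q$, which modulo $(x,y)$ yields $\bar{a}_i \bar{b}_j - \bar{a}_j \bar{b}_i = \lambda_{ij} \bar{q}$ in $\bar S = S/(x,y)$ for some $\lambda_{ij} \in k$. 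Primariness of $J$ further rules out extras with $\bar{a}_i = 0$ or $\bar{b}_i = 0$ in $R = S/\q$ (else $a_i$ or $b_i$ would be a zerodivisor outside $\q$). For three extras, rescaling $\lambda_{12} = 1$ and using the relation $\bar{a}_1(\bar{b}_3 - \lambda_{13}\bar{b}_2) = \bar{b}_1(\bar{a}_3 - \lambda_{13}\bar{a}_2)$ in the polynomial UFD $\bar S$: irreducibility of $\bar q$ (since $\q$ is prime) excludes $\bar{a}_1 \mid \bar{b}_1$, forcing $\bar{a}_1 \mid (\bar{a}_3 - \lambda_{13}\bar{a}_2)$, from which $(\bar{a}_3, \bar{b}_3) = \mu(\bar{a}_1, \bar{b}_1) + \lambda_{13}(\bar{a}_2, \bar{b}_2)$ for some $\mu \in k$; lifting shows the third extra is redundant modulo $\mathrm{span}\{x^2, xy, y^2\}$, so $m \leq 2$. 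The cases $m = 0, 1, 2$ give cases (2), (3), (4), with the stated height conditions forced by primariness. In case (4), lifting $\bar{a}\bar{d} - \bar{b}\bar{c} = \bar{q}$ to $S$ produces $q = ad - bc + ex + fy$ for some $e, f \in S_1$. A concluding application of Lemma~\ref{unmixedequal} confirms that in case (4) the six listed quadrics already achieve $e(S/J) = 4$ (no $L'$ needed), while in cases (2) and (3) the remaining generators of $J$ are absorbed into $L'$.

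The main obstacle is the bound $m \leq 2$: the rank-one condition on the $2 \times m$ matrix of $(\bar{a}_i, \bar{b}_i)$ over $R$ only gives proportionality of columns over the fraction field of $R$, so a degree-and-irreducibility argument over $\bar S$ is needed to convert this into a genuine $k$-linear bound, and the ``$\bar{b}_i = 0$'' degenerate configurations must be ruled out using $\q$-primariness.
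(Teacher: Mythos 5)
Your overall route is the paper's: establish $\q^2 + (q) \subseteq J$, dispose of the case where $J$ contains a linear form via Lemma~\ref{unmixedequal}, and then classify the remaining quadric generators $a_ix+b_iy$ using determinantal identities and $\q$-primariness; your a priori bound $m\le 2$ via unique factorization in $S/(x,y)$ is a reasonable repackaging of what the paper does incrementally. However, there is a genuine gap in the passage from ``$m=1$'' to case (3): you never examine the interaction between a single extra generator $ax+by$ and the quadric $q$ itself, and the condition ${\rm ht}(x,y,a,b,q)=5$ is \emph{not} forced by primariness alone. If ${\rm ht}(x,y,a,b,q)=4$, one can write $q=fx+ey+da-cb$ for linear forms $c,d,e,f$, and then the identity
\[
b(cx+dy)=-x\,q+f\,x^2+e\,xy+d(ax+by)\in J,
\]
together with $b\notin\q$, forces $cx+dy\in J$; this either yields a linear form in $J$ (when ${\rm ht}(x,y,c,d)\le 3$, a contradiction) or a sixth independent quadric pushing you into case (4). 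Without this step your trichotomy indexed by $m$ does not match the trichotomy (2)/(3)/(4) of the statement, and the height condition in case (3) is left unproved.

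Two smaller asserted-but-unproved points. First, in case (4) you use $\bar a\bar d-\bar b\bar c=\bar q$, but your pairwise relation only gives $\bar a\bar d-\bar b\bar c=\lambda\bar q$; the case $\lambda=0$ must be excluded (it forces the two extras to be proportional modulo $(x,y)^2$, or produces a linear form in $J$, by the same UFD/primariness reasoning you deploy for $m=3$), and this is exactly the ``$\alpha=0$'' subcase the paper treats separately. Second, Lemma~\ref{unmixedequal} cannot by itself ``confirm'' that the six listed quadrics generate an unmixed height-$3$ ideal of multiplicity $4$: to apply that lemma you must already know the candidate ideal is unmixed with $e=4$, which is precisely the content of the paper's Lemma~\ref{22gen}, proved by exhibiting an explicit free resolution and checking the Buchsbaum--Eisenbud criterion. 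Some such verification is required before you can conclude that $J$ equals the six-quadric ideal in case (4).
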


\begin{proof}
First, notice that after possibly modifying $q$ modulo $(x,y)$ we may assume $q\in J$. Indeed, if not, every quadric in $J$ would be contained in $(x,y)$, a contradiction. Notice that the Hilbert function of $J$ locally at $\mathfrak{q}$ is $HF_{S_\mathfrak{q}/J_\mathfrak{q}}: (1, 1,0,\ldots)$. In particular, $\mathfrak{q}_\mathfrak{q}^2\subseteq J_\mathfrak{q}$. Since $J$ is $\mathfrak{q}$-primary, this yields $\mathfrak{q}^2\subseteq J \subseteq \mathfrak{q}$.  We conclude that $(x^2,xy,y^2,q)\subseteq J$.

Assume $J$ also contains a linear form. Without loss of generality we may assume it is $x$. Then  $(x,y^2,q)\subseteq J$. However, $x,y^2,q$ is a complete intersection ideal contained in the unmixed ideal $J$ and $e(S/(x,y^2,q)) = e(S/J) = 4$; hence $J=(x,y^2,q)$ by Lemma~\ref{unmixedequal}, and we are in case $(1)$.

We now assume $J$ does not contain a linear form. Since $e(S/(x^2,xy,y^2,q))=6$, there must be at least an additional generator for $J$. If all such additional generators have degree at least three, then we are in case $(2)$ of the statement. We may then assume $J$ contains a quadric generator $q'\notin (x^2,xy,y^2,q)$. Since $q\in J$, without loss of generality we may assume $q'$ is of the form $q' = ax+by$, where $a,b$ are linear forms. Since $(x,y)^2\subseteq J$ we may assume ${\rm ht}(x,y,a,b) \ge 3$. Also, if ${\rm ht}(x,y,a,b) = 3={\rm ht}(x,y,a)$, then it is easily seen that $a\ell \in J$ for some $\ell \in (x,y)$, and hence $\ell \in J$, contradicting that $J$ contains no linear forms. Thus we may assume  ${\rm ht}(x,y,a,b)=4$.

If ${\rm ht}(x,y,a,b,q) = 5$ and there are no additional quadric generators of $J$, then we are in case $(3)$.  If ${\rm ht}(x,y,a,b,q) = 4$, then we may write $q = fx + ey + da - cb$ for  linear forms $c,d,e,f$.  Then $b(cx + dy) = -x(fx + ey + da - cb) + f(x^2) + e(xy) + d(ax + by) \in J$.  Since $b \notin \q$ and $J$ is $\q$-primary, $cx + dy \in J$.  If ${\rm ht}(x,y,c,d) = 2$, then $q \in (x,y)$, a contradiction.  If ${\rm ht}(x,y,c,d) = 3$, then $c,d$ share a common factor modulo $(x,y)$ and again we have a linear form in $J$.

Hence, we may assume there is an additional quadric generator $q''=cx+dy \in J - (x^2,xy,y^2,q,q')$, where $c,d$ are linear forms with ${\rm ht}(x,y,c,d) = 4$.  Note that $(ad - bc)x = d(ax+by) - b(cx + dy) \in J$.  If $ad - bc \not\in \q$, then $x \in J$, contradicting that $J$ does not contain any linear forms.  Hence we can write $ad - bc = ex + fy + \alpha q$ for linear forms $e,f$ and $\alpha \in k$.

If $\alpha = 0$, then $ad - bc \in (x,y)$.  So $ad - bc = 0 (\textrm{modulo } (x,y))$.  It follows that  $ax+by = \alpha(cx + dy)$ modulo $(x,y)^2$, which contradicts that $q'' = cx + dy \notin (x^2, xy, y^2, q, q' = ax+by)$.  Hence $\alpha \not= 0$, and we may assume that $q = ad - bc + ex + fy$, for linear forms $e,f$.

Set $J' = (x^2, xy, y^2, ax+by, cx+dy, ad-bc+ex+fy)$.  By Lemma~\ref{22gen}, $J'$ has multiplicity $4$ and is Cohen-Macaulay; in particular, $J'$ is unmixed, ${\rm ht}(J') = {\rm ht}(J) = 3$ and $J' \subseteq J$;  hence $J' = J$ by Lemma~\ref{unmixedequal}.
\end{proof}

\section{Multiplicity $2$}\label{e2}

In this section, we again set $I = (q_1,q_2,q_3,q_4)$ to be an ideal generated by $4$ quadrics with $\h(I) = 3$.  The next proposition gives a bound on the projective dimension when $e(S/I) = 2$.

\begin{prop}\label{mult2} Suppose $e(S/I) = 2$.  Then $\pd(S/I) \le 6$.
\end{prop}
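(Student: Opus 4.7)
The plan is to classify $I^{un}$ into a small number of structural types via the associativity formula (Proposition~\ref{Ass}) together with the characterizations of height-three primes of small multiplicity (Theorem~\ref{SN}, Corollary~\ref{e=2}), and then dispatch each type using one of three main tools: Lemma~\ref{iuncm} when $I^{un}$ is Cohen-Macaulay, Lemma~\ref{LinForm} when $I^{un}$ contains a linear form, and Corollary~\ref{iungens} when $I^{un}$ is extended from a polynomial subring in at most six linear forms.

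Since $e(S/I^{un})=2$ and $\h(I^{un})=3$, there are three mutually exclusive possibilities for $I^{un}$: (a)~$I^{un}=\q=(x,y,q)$, a complete intersection equal to a height-three multiplicity-two prime; (b)~$I^{un}$ is $\p$-primary for a linear prime $\p=(x,y,z)$ with $\lambda(S_{\p}/I^{un}_{\p})=2$; or (c)~$I^{un}=\p_1\cap\p_2$ for two distinct height-three linear primes. Case~(a) is immediate, as $I^{un}$ is Cohen-Macaulay and Lemma~\ref{iuncm} gives $\pd(S/I)\le 4$. In case~(c), if $\p_1$ and $\p_2$ share a linear form then $I^{un}$ contains it and Lemma~\ref{LinForm} gives $\pd(S/I)\le 5$; otherwise $\p_1+\p_2$ is a regular sequence of six linear forms, so $I^{un}=\p_1\p_2$ is extended from the six-variable polynomial subring they generate, and Corollary~\ref{iungens} gives $\pd(S/I)\le 6$.

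For case~(b), apply Proposition~\ref{1;2} to split $I^{un}$ into four sub-types. Sub-types~(1) and~(2) contain an explicit linear form, so Lemma~\ref{LinForm} gives $\pd(S/I)\le 5$. In sub-type~(4), all quadrics of $I^{un}$, and in particular the generators $q_1,q_2,q_3,q_4$ of $I$, lie in a six-variable polynomial subring $A$, and Lemma~\ref{regseq} gives $\pd(S/I)\le 6$.

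The main obstacle is sub-type~(3), where $I^{un}=(x,y,z)^2+(ax+by+cz,\,dx+ey+fz)$ may involve as many as nine independent linear forms, so Corollary~\ref{iungens} alone is insufficient. Here we combine the bound $\pd(S/I^{un})\le 4$ guaranteed by Proposition~\ref{1;2} with Proposition~\ref{messy}, which yields $\pd(S/I)\le \max\{5,\,\pd(\coker(\partial_4^*))\}$, where $F_\*$ is the minimal free resolution of $I^{un}$. Since $\pd(S/I^{un})\le 4$, the dual complex has length four and $\coker(\partial_4^*)=\ext_S^4(S/I^{un},S)$. Treating the truncated dual complex as a partial resolution of this Ext module, and using the linkage identification of $\ext_S^3(S/I^{un},S)$ with $S/L$ up to shift to control the intermediate cohomology via the already-understood link $L=(q_1,q_2,q_3):I^{un}$, bounds $\pd(\coker(\partial_4^*))\le 6$. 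This yields $\pd(S/I)\le 6$ in the final case and, combined with the bounds in all other cases, completes the proof.
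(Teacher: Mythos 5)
Your case division and the treatment of all but one sub-case track the paper's proof exactly: type $\langle 2;1\rangle$ via Corollary~\ref{e=2} and Lemma~\ref{iuncm}, type $\langle 1,1;1,1\rangle$ via Corollary~\ref{iungens} (with Lemma~\ref{LinForm} absorbing the degenerate position), and sub-types (1), (2), (4) of Proposition~\ref{1;2} via Lemmas~\ref{LinForm} and \ref{regseq}. The problem is sub-type (3), $I^{un}=(x,y,z)^2+(ax+by+cz,dx+ey+fz)$, which you correctly identify as the main obstacle but do not actually resolve.

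Your sketch there has a genuine gap. After invoking Proposition~\ref{messy} you must bound $\pd(\coker(\partial_4^*))$, and the dualized resolution $F_0^*\to\cdots\to F_4^*$ is \emph{not} a (partial) resolution of $\coker(\partial_4^*)$: it has homology $\ext^3_S(S/I^{un},S)$ at the third spot, and by linkage this module is $(C:I^{un})/C$ (a shift of $L/C$, not of $S/L$) for a complete intersection $C\subseteq I^{un}$. Chasing the two short exact sequences that splice the dual complex shows that any bound on $\pd(\coker(\partial_4^*))$ requires a bound on $\pd(S/L)$ for some link $L$ of $I^{un}$ --- and that is precisely the input your argument never supplies. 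The link $(q_1,q_2,q_3):I^{un}$ is not ``already understood'': it is an unmixed multiplicity-$6$ ideal about which nothing proved so far says anything. The paper closes this case by choosing the concrete complete intersection $(x^2,y^2,z^2)\subseteq I^{un}$ and explicitly resolving the link $L=(x^2,y^2,z^2):I^{un}=(x^2,y^2,z^2,xyz,(ae-bd)xy+(af-cd)xz+(bf-ce)yz)$ in Lemma~\ref{primary21}, obtaining $\pd(S/L)=4$; then Lemma~\ref{ses} immediately gives $\pd(S/I)\le 5$, with no need for Proposition~\ref{messy} at all. If you supply such a bound on a link (which in this sub-case seems to require the explicit appendix computation), your Ext-chasing route does collapse to the same conclusion, but as written the key step is asserted rather than proved.
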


\begin{proof} There are three cases for the type of $I^{un}$: $\langle 2;1 \rangle$, $\langle 1,1;1,1 \rangle$, and $\langle 1;2 \rangle$.  

If $I^{un}$ is of type $\langle 2;1 \rangle$, then $I^{un}$ is prime and hence is Cohen-Macaulay by Corollary~\ref{e=2}.  By Lemma~\ref{iuncm}, $\pd(S/I) \le 4$.

If $I^{un}$ is of type $\langle 1,1;1,1 \rangle$, then $I^{un} = (u,v,w) \cap (x,y,z)$, hence  $\pd(S/I) \le 6$ by Corollary~\ref{iungens}. 

If $I^{un}$ is of type $\langle 1;2 \rangle$, then $I^{un}$ satisfies one of the four cases in Proposition~\ref{1;2}.  In the first two cases, $I^{un}$ contains a linear form and $\pd(S/I) \le 5$ by Lemma~\ref{LinForm}.  In the last case, we have $\pd(S/I) \le 6$ by Lemma~\ref{regseq} again.  Finally, if $I^{un} = (x,y,z)^2 + (ax+by+cz,dx+ey+fz)$, then  by Lemma~\ref{primary21}, $\pd(S/(x^2,y^2,z^2):I^{un}) = 4$.  Hence $\pd(S/I) \le 5$ by Lemma~\ref{ses}.
\end{proof}

\section{Multiplicity $3$ and $5$}\label{e35}

Recall that we are assuming $I = (q_1, q_2, q_3, q_4)$ is a height $3$ ideal generated by four quadrics and $(q_1, q_2, q_3)$ form a complete intersection.  As the cases $e(S/I) = 3$ and $e(S/I) = 5$ are very similar, we will handle these cases simultaneously.  When $e(S/I) = 3$, we show that $\pd(S/I) \le 6$.  When $e(S/I) = 5$, we consider the ideal $L = (q_1,q_2,q_3):I$, which is directly linked to $I^{un}$ and satisfies $e(S/L) = 3$.  In this case, we show that either $\pd(S/I) \le 6$ or the quadrics in $L$ are extended from a polynomial ring with at most $6$ variables.  Thus either $I^{un}$ or $L$ is of one of the following types: $\langle 3;1 \rangle$, $\langle 1;3 \rangle$, $\langle 1,2;1,1 \rangle$, $\langle 1,1;1,2 \rangle$, $\langle 1,1,1;1,1,1 \rangle$.  For several of these, we devote a separate lemma.  The final bound arguments are collected in Proposition~\ref{mult35}.

\begin{lem}\label{pd13}
If $I^{un}$ is type $\langle 1;3 \rangle$, then $\pd(S/I) \le 6$.  If $L$ is of type $\langle 1;3 \rangle$, then $\pd(S/I) \le 6$ or the quadrics in $L$ can be expressed in terms of at most $6$ variables.
\end{lem}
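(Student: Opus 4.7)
The argument proceeds in both parts by applying Proposition~\ref{1;3} -- which classifies $\p$-primary ideals of multiplicity $3$ (for a linear prime $\p = (x,y,z)$) into seven cases -- to the relevant ideal. A preliminary observation simplifies both parts: since $q_i \cdot I^{un} \subseteq (q_i) \subseteq (q_1,q_2,q_3)$ for each $i$, we have $q_1,q_2,q_3 \in L$, and of course $q_1,q_2,q_3 \in I^{un}$; hence both $I^{un}$ and $L$ contain three quadrics that generate a height-$3$ ideal. Consequently case~$(7)$ of Proposition~\ref{1;3} -- which requires all quadrics of the ideal to generate an ideal of height $\le 2$ -- is impossible in either setting.

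I would dispose of Part~2 first. Applying Proposition~\ref{1;3} to $L$: in cases~$(1)$--$(5)$ one has $\pd(S/L) \le 4$, so Lemma~\ref{ses} boosts this to $\pd(S/I) \le \pd(S/L) + 1 \le 5$; case~$(6)$ is exactly the escape conclusion that the quadrics in $L$ lie in at most $6$ variables; case~$(7)$ is impossible by the preliminary observation. Thus Part~2 follows immediately from the classification.

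For Part~1, apply Proposition~\ref{1;3} to $I^{un}$. Case~$(7)$ is impossible as above. In case~$(6)$, every quadric generator of $I$, being an element of $I^{un}$, lies in a $6$-variable polynomial subring generated by a regular sequence of linear forms, so $I$ is extended from this subring and $\pd(S/I) \le 6$ by Lemma~\ref{regseq}. In cases~$(1)$ and~$(2)$, $I^{un}$ is extended from a polynomial ring in at most $4$ linear forms (a regular sequence), so Corollary~\ref{iungens} gives $\pd(S/I) \le \max\{5,4\} = 5$.

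The hard part will be cases~$(3)$--$(5)$ of Part~1. Here $I^{un}$ need not be Cohen--Macaulay and may involve up to $7$ independent linear forms, so Corollary~\ref{iungens} alone gives only $\pd(S/I) \le 7$. The plan is to use the explicit normal form of $I^{un}$ provided by Proposition~\ref{1;3} to compute the link $L = (q_1,q_2,q_3):I^{un}$ case by case. In each subcase the quadric part of $I^{un}$ is spanned, modulo $(x,y)\cdot S_1$, by a single extra degree-$2$ monomial ($z^2$, $y^2$, or $yz$); since $\h(I)=3$, at least one generator of $I$ must carry this extra monomial, and after rescaling one of the $q_i$ we may arrange that the other three lie in $(x,y)$. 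Using this normalization together with dedicated appendix lemmas, one verifies $\pd(S/L) \le 5$ and then invokes Lemma~\ref{ses} to conclude $\pd(S/I) \le 6$. The combinatorial bookkeeping needed to exhibit a suitable regular sequence inside $I^{un}$ and to simplify the resulting colon ideals in each of these three subcases is what makes this final step the most delicate part of the proof.
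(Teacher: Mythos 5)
Your handling of Part 2 and of cases (1), (2), (6), (7) of Part 1 is correct and consistent with the paper (the paper dispatches cases (1) and (2) via Lemma~\ref{LinForm} rather than Corollary~\ref{iungens}, but both give $\pd(S/I)\le 5$). The genuine gap is in cases (3)--(5) of Part 1, which you rightly flag as the hard part but leave as a plan rather than a proof. Moreover, the plan as described would be very difficult to execute: you propose to compute $L=(q_1,q_2,q_3):I^{un}$ directly, but the $q_i$ are essentially arbitrary quadrics of $I^{un}$ subject only to $\h(q_1,q_2,q_3)=3$, and your proposed normalization (one generator carries the extra monomial, ``the other three lie in $(x,y)$'') cannot be arranged for the three quadrics defining $L$, since three quadrics in $(x,y)$ have height at most $2$ and so do not form the required regular sequence.

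The ingredient you are missing is Lemma~\ref{doublelink}: any two unmixed ideals linked to the same ideal have equal projective dimension. This lets you forget the actual $q_i$ entirely. For each normal form in cases (3)--(5), the appendix lemmas \ref{primary33}, \ref{primary35} and \ref{primary36} supply an ideal $L'$ directly linked to $I^{un}$ via a convenient explicit complete intersection with $\pd(S/L')\le 4$ (for instance $L'=(x^2,y^2,ax+by+z^2):I^{un}$ in case (3)). Since $L=(q_1,q_2,q_3):I^{un}$ and $L'$ are both linked to $I^{un}$, Lemma~\ref{doublelink} gives $\pd(S/L)=\pd(S/L')\le 4$, and Lemma~\ref{ses} then yields $\pd(S/I)\le 5$. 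With this substitution your argument closes; without it, cases (3)--(5) of Part 1 remain unproved.
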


\begin{proof} Note that we are done if either $I^{un}$ or $L$ contains a linear form (by Lemma~\ref{LinForm}), or if the quadrics in either are expressible in terms of at most $6$ variables (by Lemma~\ref{regseq}), or generate an ideal of height at most $2$ (which contradicts our assumptions about $I$).  By Proposition~\ref{1;3}, this leaves only cases (3) - (5) to consider.  If $L$ is one of these ideals, then $\pd(S/L) \le 4$ by Lemmas~\ref{primary33}, \ref{primary35} and \ref{primary36}, and so $\pd(S/I) \le 5$ by Lemma~\ref{ses}.  If $I^{un}$ is one of these ideals, then we note that again by Lemmas~\ref{primary33}, \ref{primary35} and \ref{primary36}, an ideal $L'$ directly linked to $I^{un}$ satisfies $\pd(S/L') \le 4$.  Hence $\pd(S/I) \le 5$ by Lemmas~\ref{doublelink} and \ref{ses}.
\end{proof}

\begin{lem}\label{pd1211}
If $I^{un}$ is of type $\langle 1,2;1,1 \rangle$, then $\pd(S/I) \le 6$.  If $L$ is of type $\langle 1,2;1,1 \rangle$, then $\pd(S/I) \le 6$ or  the quadrics in $L$ can be expressed in terms of at most $6$ variables.
\end{lem}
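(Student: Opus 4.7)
The type $\langle 1,2;1,1\rangle$ structure forces $I^{un}$ (resp.\ $L$) to decompose as $\p_1 \cap \p_2$, where by Theorem~\ref{SN} and Corollary~\ref{e=2} we may write $\p_1 = (x,y,z)$ and $\p_2 = (u,v,q)$ with $x,y,z,u,v \in S_1$ and $q \in S_2$.  The plan is to show that in every case either $\pd(S/I) \le 5$ or else every quadric in $L$ lies in a polynomial subring in $5$ variables.

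First, if $\h(x,y,z,u,v) \le 4$, some nontrivial combination of $u,v$ lies in $(x,y,z)$, yielding a nonzero linear form in $\p_1 \cap \p_2$, and Lemma~\ref{LinForm} gives $\pd(S/I) \le 5$.  So assume $\h(x,y,z,u,v) = 5$ and set $R = k[x,y,z,u,v] \subseteq S$.  Any quadric $Q \in \p_1 \cap \p_2$ admits two decompositions: $Q = Ax+By+Cz$ (from $Q \in \p_1$) and $Q = Du+Ev+\alpha q$ (from $Q \in \p_2$), for $A,B,C,D,E \in S_1$ and $\alpha \in k$.  The argument splits on whether some such $Q$ has $\alpha \ne 0$.

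\emph{Case (i)}: every $Q \in \p_1 \cap \p_2$ has $\alpha = 0$.  Then each such $Q$ lies in $(x,y,z) \cap (u,v) = (x,y,z)(u,v)$, whose degree-$2$ piece is the $6$-dimensional $k$-span of $\{xu, xv, yu, yv, zu, zv\} \subseteq R_2$.  For the $I^{un}$ statement this forces $q_1, q_2, q_3, q_4 \in R_2$, so $I$ is extended from $R$ and Lemma~\ref{regseq} gives $\pd(S/I) \le 5$.  For the $L$ statement, every quadric in $L$ lies in $R$, giving the alternative conclusion.

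\emph{Case (ii)}: some $Q$ has $\alpha \ne 0$.  Then $\alpha q = Q - Du - Ev$ shows $q \in (u,v,Q)$, so $\p_2 = (u,v,Q)$; since $Q \in (x,y,z)$, it follows that $\p_1 + \p_2 = (x,y,z,u,v)$ is a complete intersection with $\pd(S/(\p_1 + \p_2)) = 5$.  Lemma~\ref{sum} applied to $\p_1 \cap \p_2$ yields $\pd(S/(\p_1 \cap \p_2)) \le \max\{3, 3, 4\} = 4$, so for the $L$ statement Lemma~\ref{ses} gives $\pd(S/I) \le 5$.  For the $I^{un}$ statement, I would dualize the sequence $0 \to S/(\p_1 \cap \p_2) \to S/\p_1 \oplus S/\p_2 \to S/(\p_1+\p_2) \to 0$ by applying $\ext^\bullet_S(-, S)$: since $\grade(\p_1+\p_2) = 5$, both $\ext^3_S(S/(\p_1+\p_2), S)$ and $\ext^4_S(S/(\p_1+\p_2), S)$ vanish, yielding $\omega_{S/I^{un}} \iso \omega_{S/\p_1} \oplus \omega_{S/\p_2}$ of projective dimension $3$ (both summands being Cohen-Macaulay of codimension $3$).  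Proposition~\ref{messy} then gives $\pd(S/I) \le \max\{5, \pd(\omega_{S/I^{un}}) + 2\} = 5$.  The hard part will be this canonical-module computation for the $I^{un}$ statement in Case~(ii): bounding $\pd(S/L)$ directly from $\pd(S/I^{un}) \le 4$ is not automatic, and the crucial input is that $\h(\p_1+\p_2)$ drops from the generic value $6$ to $5$, forcing $\omega_{S/I^{un}}$ to split as a sum of Cohen-Macaulay codimension-$3$ canonical modules, keeping its projective dimension at $3$ so that Proposition~\ref{messy} applies.
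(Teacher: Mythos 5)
Your proof is correct, and for the $L$ half it follows essentially the paper's route: write the ideal as $(x,y,z)\cap(u,v,q)$, arrange that $q\in(x,y,z)$ so that the sum of the two primes is $(x,y,z,u,v)$, then apply Lemma~\ref{sum} and Lemma~\ref{ses}. (Your Case~(i) is what the paper dispatches with Lemma~\ref{inters}: since the intersection contains the height-$3$ ideal $(q_1,q_2,q_3)$, not every quadric in it can lie in $(u,v)$, so that case is in fact vacuous; the weaker conclusions you draw there are still valid.) Where you genuinely diverge is the $I^{un}$ half. The paper writes $I^{un}=(ax+by+cz)+(x,y,z)(u,v)$, exhibits its explicit minimal free resolution in the appendix (Lemma~\ref{1211case}), observes that the last differential has entries in $k[x,y,z,u,v]$, and then invokes Lemma~\ref{regseq} together with Proposition~\ref{messy}. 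You instead dualize the Mayer--Vietoris sequence and use $\grade(\p_1+\p_2)=5>4$ to obtain $\ext^3_S(S/I^{un},S)\iso\ext^3_S(S/\p_1,S)\oplus\ext^3_S(S/\p_2,S)$, a direct sum of codimension-$3$ maximal Cohen--Macaulay modules of projective dimension $3$; feeding this into Proposition~\ref{messy} gives $\pd(S/I)\le 5$. One small caveat: the statement of Proposition~\ref{messy} is phrased via $\coker(\partial_{h+1}^*)$, whereas what you actually use is the intermediate inequality $\pd(S/I)\le\max\{h+1,\pd(\ext^h_S(S/I^{un},S))+2\}$ from its proof --- that inequality is established there, so your appeal is legitimate, but you should cite the proof rather than the bare statement. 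Your route avoids the appendix computation entirely and isolates the real reason the bound holds, namely that the drop $\h(\p_1+\p_2)=5$ kills $\ext^3$ and $\ext^4$ of $S/(\p_1+\p_2)$ and forces the canonical module of $S/I^{un}$ to split; the paper's route is less conceptual but uniform with its treatment of the other unmixed structures via explicit resolutions.
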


\begin{proof} First consider the case $L = (x,y,z) \cap (u,v,q)$, where $x,y,z,u,v \in S_1$ and $q \in S_2$.  By Lemma~\ref{inters}, we may assume that $q \in (x,y,z)$.  Thus $(x,y,z)+(u,v,q) = (x,y,z,u,v)$.  From Lemma~\ref{sum}
it follows that $\pd(S/L) \le 4$.  Hence $\pd(S/I) \le 5$ by Lemma~\ref{ses}.

Now if $I^{un} = (x,y,z) \cap (u,v,q)$, we may again assume that $q \in (x,y,z)$, say $q = ax+by+cz$.  If $\h(x,y,z,u,v) \le 4$, then $I^{un}$ contains a linear form and we are done.  We may then assume that $\h(x,y,z,u,v) = 5$ and so $I^{un} = (q,xu,xv,yu,yv,zu,zv)$.  Let $\partial_i$ denote the $i^{\text th}$ differential map in the minimal free resolution of $I^{un}$.  By Lemma~\ref{1211case},  we have $\pd(\coker(\partial_4*)) \le 5$.  Hence by Lemma~\ref{messy}, $\pd(S/I) \le 5$.
\end{proof}

\begin{lem}\label{pd1112}
If $I^{un}$ is of type $\langle 1,1;1,2 \rangle$, then $\pd(S/I) \le 6$.  If $L$ is of type $\langle 1,1;1,2 \rangle$, then $\pd(S/I) \le 6$ or  the quadrics in $L$ can be expressed in terms of at most $6$ variables.
\end{lem}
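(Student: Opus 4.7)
Write $K = \p_1 \cap J_2$, where $\p_1 = (x,y,z)$ is a height-three linear prime, $J_2$ is a $\p_2 = (u,v,w)$-primary ideal with $e(S/J_2) = 2$, and $K$ denotes $I^{un}$ in the first statement and $L$ in the second. By Proposition~\ref{1;2}, $J_2$ falls into one of four structural types, and my plan is to handle each in turn.

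The endpoint cases are easy. In Case (1), $J_2 = (u,v,w^2)$, so $K$ is extended from the $6$-variable subring $A = k[x,y,z,u,v,w]$. In the $I^{un}$-case, Corollary~\ref{iungens} with $t=6$ gives $\pd(S/I) \le \max\{5,6\} = 6$; in the $L$-case, every quadric of $L \subseteq A$ lies in these six linear forms. In Case (4), every quadric of $J_2$ already lies in some $6$-variable subring $A$, so every element of $K \cap S_2 \subseteq J_2 \cap S_2 \subseteq A$ does too: this gives the escape clause in the $L$-case, and in the $I^{un}$-case implies $q_1,\ldots,q_4 \in A$, so $\pd(S/I) \le 6$ by Lemma~\ref{regseq}.

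In Cases (2) and (3), the plan is to show either $K$ contains a linear form (whence Lemma~\ref{LinForm} gives $\pd(S/I) \le 5$), or the quadrics of $K$ can be expressed in at most six linear forms (in which case we conclude as above: directly via the escape clause for the $L$-case, and via Lemma~\ref{regseq} applied to $q_1,\ldots,q_4 \in K \cap S_2$ for the $I^{un}$-case). To exhibit the second alternative I would compute $K \cap S_2$ by passing to $\bar S = S/\p_1$ and analyzing the vanishing of the image of $J_2 \cap S_2$ there. Specifically, in Case (2) with $J_2 = (u, v^2, vw, w^2, av+bw)$: if $u \in \p_1$, then $u \in K$ and we are done; otherwise, under the hypothesis that $\bar u, \bar v, \bar w, \bar a, \bar b$ are linearly independent in $\bar S_1$, a direct monomial-by-monomial calculation in $\bar S_2$ forces $K \cap S_2 \subseteq u \cdot (x,y,z)$, an ideal of height at most $2$, contradicting the presence of the regular sequence $q_1,q_2,q_3$ in $K$. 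Hence some nontrivial linear relation must hold among $\bar u, \bar v, \bar w, \bar a, \bar b$, and I would use this relation to collapse the number of linear forms needed to express the quadrics of $K$ down to at most six. Case (3) proceeds by the same strategy but with $J_2 = (u,v,w)^2 + (au+bv+cw, du+ev+fw)$ involving the nine coefficient linear forms $u,v,w,a,b,c,d,e,f$, and correspondingly more elaborate bookkeeping in $\bar S_2$.

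The main obstacle is precisely this bookkeeping, most severely in Case (3): one must verify that every configuration of dependencies compatible with the height constraint $\h(q_1,q_2,q_3) = 3$ either produces a linear form in $K$ or forces the quadrics of $K$ to fit inside six linear forms (rather than seven or more). Enumerating the finitely many cases of which sub-collections of $\bar u, \bar v, \bar w, \bar a, \bar b, \bar c, \bar d, \bar e, \bar f$ can drop rank, and verifying the quadric-generator collapse in each, is expected to be the most delicate part of the proof.
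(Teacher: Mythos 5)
Your treatment of the endpoint cases (1) and (4) matches the paper, and in case (2) the paper's analysis does indeed terminate in your dichotomy (every subcase either produces a linear form or collapses the quadrics into at most six linear forms). But the dichotomy you propose for case (3) --- ``either $K$ contains a linear form or its quadrics fit into six linear forms'' --- is not exhaustive, and this is a genuine gap. Take $J_2 = (x,y,z)^2 + (ax+by+cz,\,dx+ey+fz)$ and suppose the two associated primes overlap in two linear forms, so that after normalization the linear prime is $(x,y,w)$ and one may take $c=w$, $f=0$. Then $I^{un} = (x^2,xy,xz,y^2,yz,z^2w,ax+by+wz,dx+ey)$ with $\h(x,y,z,d,e)=5$: this ideal contains no linear form, and nothing forces $a$, $b$, $w$ into the span of the remaining forms, so its quadrics genuinely involve up to eight independent linear forms. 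Your ``collapse to six'' step fails here, and the paper has to do something qualitatively different: it writes down the minimal free resolution of this unmixed part explicitly (Lemma~\ref{1112case}), checks that $\pd(\coker(\partial_4^*)) \le 5$, and invokes Proposition~\ref{messy} to get $\pd(S/I)\le 5$. The adjacent subcase where the primes share exactly one linear form is also not settled by your dichotomy; the paper rules it out by a multiplicity contradiction ($e(S/(I+(x)))=4$ would exceed $e(S/I)=3$).

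A secondary point: for the $L$-half of case (3) the paper does not rely on the six-variable escape at all. It observes that $(u,v,w)+L_2 = (u,v,w)+(x,y,z)^2$ has projective dimension at most $6$, so Lemma~\ref{sum} gives $\pd(S/L)\le 5$ and Lemma~\ref{ses} finishes. You will need either that Mayer--Vietoris-type bound or the explicit-resolution argument to close case (3); the linear-form/six-variable alternative alone cannot.
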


\begin{proof} Suppose that $L$ or $I^{un}$ is $(u,v,w) \cap L_2$, where $L_2$ is as in Proposition~\ref{1;2}.  First assume $L_2 = (x,y,z^2)$. If $L=(u,v,w) \cap (x,y,z^2)$, then since $\pd(S/(u,v,w,x,y,z^2)) \le 6$ then we have $\pd(S/L) \le 5$ by Lemma~\ref{sum}.  Thus $\pd(S/I) \le 6$ by Lemma~\ref{ses}.  If $I^{un} = (u,v,w) \cap (x,y,z^2)$, then $\pd(S/I) \le 6$ by Corollary~\ref{iungens}.

Assume then that $L_2 = (x,y^2,yz,z^2,ay+bz)$. By Lemma \ref{LinForm} we may assume that $\h(u,v,w,x) = 4$ or else $(u,v,w)\cap L_2$ contains a linear form. 
We may further assume that $\deg(a) = \deg(b) = 1$ and that $ay+bz \in (u,v,w)\cap L_2$, or else all quadrics in $(u,v,w)\cap L_2$ are expressible in terms of $u,v,w,x,y,z$.  If $\h(u,v,w,x,y,z) = 6$, then $(u,v,w) \cap L_2 = (ay+bz) + (u,v,w)(x,y^2,yz,z^2)$ all of whose quadrics are contained in the height $2$ ideal $(ay+bz,x)$, contradicting that $\h(q_1, q_2, q_3) = 3$.  So we must have $\h(u,v,w,x,y,z) \le 5$ and may assume that $y = u$.  Since $au+bz \in (u,v,w)\cap L_2 \subseteq (u,v,w)$, we must have $b \in (u,v,w)$ or $z \in (u,v,w)$.  In either case $\h(u,v,w,x,y,z,a,b) \le 6$.  Hence all quadrics in $(u,v,w) \cap L_2$ are expressible in terms of at most $6$ variables.

Suppose $L_2 = (x,y,z)^2 + (ax+by+cz,dx+ey+fz)$.  After rewriting, we may assume that 
$ax+by+cz, dx+ey+fz \in (u,v,w)$, since otherwise all quadrics in $(u,v,w) \cap L_2$ could be expressed in terms of at most $6$ variables.  Thus $(u,v,w) + L_2 = (u,v,w) + (x,y,z)^2$, which has projective dimension at most $6$.  If $L = (u,v,w) \cap L_2$, then $\pd(S/L) \le 5$ by Lemma~\ref{sum}, and hence $\pd(S/I) \le 6$ by Lemma~\ref{ses}.

The last case to consider is when $I^{un} = (u,v,w) \cap L_2$.  We may still assume $ax+by+cz, dx+ey+fz \in (u,v,w)$.  
If $\h(u,v,w,x,y,z) = 6$, it follows that $a,b,c,d,e,f \in (x,y,z,u,v,w)$.  Thus all quadrics in $I^{un}$ are expressible in terms of $6$ variables.  If $\h(u,v,w,x,y,z) = 5$, we may assume that $u = x$.  

In the above situation, we can take $I = (x \ell_1, x \ell_2, ax+by+cz, dx+ey+fz)$ and $e(S/I) = 3$ for linear forms $\ell_1, \ell_2$.  But clearly $I + (x) = (x, by + cz, ey + fz)$.  Since ${\rm ht}(I_2) = 2$, where $I_2$ is as in Proposition~\ref{1;2}, we must have ${\rm ht}(I + (x)) = 3$.  Therefore $e(S/(I + (x))) = 4$.  This cannot happen since $I + (x) \supseteq I$ and $e(S/I) = 3$.  \\

If $\h(u,v,w,x,y,z) = 4$, we may assume that $u = x$ and $v = y$.  As above, we may assume $ax+by+cz,\,dx+ey+fz \in (x,y,w)$, hence we may assume $c = w$ and $f=0$, since $w,x,y,z$ is a regular sequence (the case where both $c,f$ are non-zero multiples of $w$ can be reduced to the above, by taking a linear combination of the two generators, and the case where $c=f=0$ cannot happen otherwise the height of the ideal of minors $I_2$ is $1$ modulo $(x,y,z)$, where $I_2$ is as in Proposition \ref{1;2}). It follows that $I^{un} = (x^2,xy,xz,y^2,yz,z^2w,ax+by+wz,dx+ey)$.  By the height restriction on $I_2$, we have ${\rm ht}(x,y,z,d,e) = 5$. By Lemma~\ref{1112case}, $\pd(\coker(\partial_4*)) \le 5$, where $\partial_4$ is the $4$th map in the free resolution of $I^{un}$.  By Proposition~\ref{messy}, $\pd(S/I) \le 5$. 
\end{proof}

\begin{lem}\label{pd111111}
If $I^{un}$ is type $\langle 1,1,1;1,1,1 \rangle$, then $\pd(S/I) \le 6$.  If $L$ is of type $\langle 1,1,1;1,1,1 \rangle$, then $\pd(S/I) \le 6$ or  the quadrics in $L$ can be expressed in terms of at most $6$ variables.
\end{lem}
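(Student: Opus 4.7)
The plan is to show that the three linear primes $P_1,P_2,P_3$ appearing in $I^{un}$ (respectively $L$) must be generated by linear forms lying in a common $6$-dimensional subspace of $S_1$; the conclusion then follows from Corollary~\ref{iungens} in the $I^{un}$ case, or by reading off the ``quadrics in at most $6$ variables'' alternative in the $L$ case. By Lemma~\ref{LinForm} we may assume no linear form lies in $I^{un}$ (resp.~$L$). A linear form lies in $P_i \cap P_j$ precisely when the six linear generators of $P_i$ and $P_j$ fail to span a $6$-dimensional space; hence $\h(P_i+P_j) = 6$ for all $i\neq j$, and after a change of variables $P_i,P_j$ are generated by disjoint sets of linear forms, so $P_i \cap P_j = P_iP_j$.

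Set $h = \h(P_1+P_2+P_3) \in \{6,7,8,9\}$. The main step is to eliminate $h\geq 7$ by bounding $\dim(P_1\cap P_2\cap P_3)_2$; this is sharp enough to contradict $\dim(I^{un})_2\geq 4$ (forced by $I^{un} \supseteq I$) or $\dim L_2 \geq 3$ (forced by $L \supseteq (q_1,q_2,q_3)$). For $h=9$ the three primes use disjoint variables, so $P_1\cap P_2\cap P_3 = P_1P_2P_3$ is generated in degree $3$. For $h=8$, write $P_3 = (a, t_1, t_2)$ with $a\in P_1+P_2$ and $t_1,t_2$ new variables; any quadric $q\in P_1P_2 \cap P_3$ avoids $t_1,t_2$, so $q = a\ell$ for some linear form $\ell$, and $a\ell\in P_1\cap P_2$ forces $\ell$ to be a linear form in $P_1\cap P_2 = 0$, giving $q=0$.

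The technical heart, and the main obstacle, is $h=7$. Here $P_3 = (a_1,a_2,t)$ with $a_1,a_2\in P_1+P_2$ and $t$ a new variable. Decompose $a_i = a_i^{(1)} + a_i^{(2)}$ with $a_i^{(j)} \in P_j$; the conditions $\h(P_j+P_3) = 6$ force $a_1^{(j)}, a_2^{(j)}$ to be linearly independent for each $j=1,2$. A quadric $q\in P_1\cap P_2\cap P_3 = P_1P_2 \cap P_3$ must be $t$-free, so $q = L_1a_1 + L_2a_2$ with $L_1,L_2$ $t$-free linear forms. Reducing modulo $P_1$ gives $L_1^{(2)}a_1^{(2)} + L_2^{(2)}a_2^{(2)} = 0$ in $k[P_2\text{-variables}]$, a degree-$1$ syzygy on the regular sequence $a_1^{(2)}, a_2^{(2)}$; hence $(L_1^{(2)}, L_2^{(2)}) = c_2(a_2^{(2)}, -a_1^{(2)})$, and symmetrically $(L_1^{(1)}, L_2^{(1)}) = c_1(a_2^{(1)}, -a_1^{(1)})$ for some scalars $c_1,c_2$. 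Expanding gives $q = (c_2-c_1)(a_1^{(2)}a_2^{(1)} - a_1^{(1)}a_2^{(2)})$, so $(P_1\cap P_2\cap P_3)_2$ is at most one-dimensional, contradicting the required lower bound.

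Therefore $h=6$. All nine linear generators of the $P_i$'s lie in a common $6$-dimensional subspace of $S_1$, so $I^{un}$ (respectively $L$) is extended from a polynomial subring $A = k[y_1,\ldots,y_6]$ with $y_1,\ldots,y_6$ a regular sequence in $S$. In the $I^{un}$ case, Corollary~\ref{iungens} immediately gives $\pd(S/I) \leq \max\{3+2, 6\} = 6$. In the $L$ case, every element of $L$, in particular every quadric, is a polynomial in $y_1,\ldots,y_6$, placing us in the alternative that the quadrics in $L$ can be expressed in terms of at most $6$ variables.
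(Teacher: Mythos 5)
There is a genuine gap at the very first reduction. You assume (legitimately, via Lemma~\ref{LinForm}) that no linear form lies in $J = P_1 \cap P_2 \cap P_3$, and from this conclude that $\h(P_i+P_j)=6$ for all $i \neq j$. But a linear form common to $P_i$ and $P_j$ need not lie in the third prime, hence need not lie in $J$; so ``no linear form in $J$'' does not rule out $\h(P_i+P_j)\le 5$ for some pair. Everything downstream depends on this: the identification $P_i\cap P_j=P_iP_j$ and, in the $h=7$ case, the linear independence of $a_1^{(j)},a_2^{(j)}$ are both extracted from the pairwise height being $6$. A concrete counterexample to the chain of deductions: $P_1=(x_1,x_2,x_3)$, $P_2=(x_4,x_5,x_6)$, $P_3=(x_1,x_4,x_7)$. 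Here $J$ contains no linear form and $h=\h(P_1+P_2+P_3)=7$, yet $J_2$ is the $5$-dimensional span of $x_1x_4,x_1x_5,x_1x_6,x_2x_4,x_3x_4$, so the assertion ``$h\ge 7$ implies $\dim J_2\le 1$'' is false and the contradiction you aim for ($\dim J_2<3$, resp.\ $<4$) is unavailable. (This configuration is ultimately excluded because its quadrics all lie in the height-two ideal $(x_1,x_4)$, but your argument never invokes the height of the quadrics, only their number.)

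The paper's proof runs the opposite dichotomy and avoids the problem. If \emph{some} pair satisfies $\h(P_i+P_j)=6$, then $J\subseteq P_i\cap P_j=P_iP_j$, so every quadric of $J$ is a combination of products of the six generating linear forms of $P_i$ and $P_j$ --- done, with no control on $P_k$ needed. If instead \emph{every} pair satisfies $\h(P_i+P_j)\le 5$, then each pair of the three $3$-dimensional spaces of linear forms meets nontrivially; either these pairwise intersections share a common line (a linear form in $J$, handled by Lemma~\ref{LinForm}) or they supply two independent directions of $P_3$ inside $P_1+P_2$, forcing $\h(P_1+P_2+P_3)\le 6$ and extension from six variables. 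Your $h\ge 7$ eliminations are correct \emph{under} the hypothesis that all pairs span six dimensions, but that hypothesis has to be earned, and the case where it fails is exactly where the product argument is required.
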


\begin{proof} Write $J = \bigcap_{i = 1}^3 L_i$ where $L_i = (x_i,y_i,z_i)$.  If $\h(L_i + L_j) = 6$ for any $i \neq j$, then all quadrics in $J$ are expressible in terms of $x_i,y_i,z_i,x_j,y_j,z_j$.  Otherwise, $\h(L_i+L_j) \le 5$ for all pairs $i \neq j$.  Unless there is a linear form in $J$, this forces $\h(L_1 + L_2 + L_3) \le 6$.  Thus either $I^{un}$ or $L$ contain a linear form, yielding $\pd(S/I) \le 5$ by Lemma~\ref{LinForm}, or all generators are expressible in terms of at most $6$ variables.  This means $\pd(S/L) \le 5$, $L$ being an unmixed ideal extended from a  $6$ variable polynomial ring, and hence $\pd(S/I) \le 6$ by Lemma~\ref{ses}.  The case for $I^{un}$ follows from Collolary~\ref{iungens}.
\end{proof}

\begin{prop}\label{mult35}
If $e(S/I) = 3$, then $\pd(S/I) \le 6$.  If $e(S/I) = 5$, then $\pd(S/I) \le 6$ or all quadrics in $L = (q_1,q_2,q_3):I$ can be expressed in terms of at most $6$ variables.
\end{prop}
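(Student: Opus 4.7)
The plan is to reduce the statement to the case analysis already completed in Lemmas~\ref{pd13}, \ref{pd1211}, \ref{pd1112}, and \ref{pd111111}, together with one short direct argument for the remaining type. First I would use Theorem~\ref{linkage}(3) to pass to an unmixed ideal of multiplicity exactly~$3$: since $(q_1,q_2,q_3)$ is a complete intersection of three quadrics, $e(S/(q_1,q_2,q_3)) = 8$, and the linkage formula gives $e(S/I^{un}) + e(S/L) = 8$. Hence if $e(S/I) = 3$ I set $J := I^{un}$, while if $e(S/I) = 5$ I set $J := L$; in either case $J$ is unmixed of height~$3$ with $e(S/J) = 3$.

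Next I would enumerate the possible types of $J$. By the associativity formula (Proposition~\ref{Ass}), writing $3 = \sum_i e_i\lambda_i$ with each $e_i \ge 1$ and every associated prime of height~$3$, the only possibilities are $\langle 3;1 \rangle$, $\langle 1;3 \rangle$, $\langle 1,2;1,1 \rangle$, $\langle 1,1;1,2 \rangle$, and $\langle 1,1,1;1,1,1 \rangle$. The last four types are settled by Lemmas~\ref{pd13}, \ref{pd1211}, \ref{pd1112}, and \ref{pd111111} respectively, each of which already delivers exactly the dichotomy claimed in the proposition, both in the $J = I^{un}$ subcase (where we always obtain $\pd(S/I) \le 6$) and in the $J = L$ subcase (where we obtain $\pd(S/I) \le 6$ or the quadrics of $L$ lie in a six-variable subring).

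Only the type $\langle 3;1 \rangle$ remains, where $J$ is prime. By Theorem~\ref{e=3}, $J$ is either a complete intersection $(x,y,c)$ with $c$ a cubic, or of the form $(x) + I_2(\boldsymbol{M})$ for a $2 \times 3$ matrix $\boldsymbol{M}$ of linear forms on which $x$ is a nonzerodivisor; both are Cohen-Macaulay. If $J = I^{un}$, then Lemma~\ref{iuncm} gives $\pd(S/I) \le 4$ directly. If $J = L$, then Theorem~\ref{linkage}(2) transfers Cohen-Macaulayness back to $I^{un}$, and the same lemma applies.

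I do not anticipate any significant obstacle: the proposition is essentially a bookkeeping step that packages the section's case-specific lemmas into one statement, and the only type not covered by a standalone lemma, namely $\langle 3;1 \rangle$, is handled immediately by Theorem~\ref{e=3} together with linkage.
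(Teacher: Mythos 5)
Your proposal is correct and follows essentially the same route as the paper: reduce via the linkage multiplicity formula to an unmixed multiplicity-$3$ ideal, enumerate the five types by associativity, and invoke Lemmas~\ref{pd13}, \ref{pd1211}, \ref{pd1112}, \ref{pd111111}. The only (harmless) difference is in the $\langle 3;1\rangle$ case, where the paper notes that such a prime contains a linear form and applies Lemma~\ref{LinForm}, while you use Cohen--Macaulayness of the prime together with Theorem~\ref{linkage}(2) and Lemma~\ref{iuncm}; both are valid.
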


\begin{proof} If $e(S/I) = 3$, then $I^{un}$ or $L = (q_1,q_2,q_3):I$ is of one of the five types listed at the beginning of the section.  If either $I^{un}$ or $L$ is of type $\langle 3;1 \rangle$, then it contains a linear form by Theorem~\ref{e=3}.  By Lemma~\ref{LinForm}, $\pd(S/I) \le 5$.  The other four cases are covered by Lemmas~\ref{pd13}, \ref{pd1211}, \ref{pd1112} and \ref{pd111111}.
\end{proof}

\section{Multiplicity $4$}\label{e4}

In this section we cover the case when $e(S/I) = 4$.  Recall that $I = (q_1,q_2,q_3,q_4)$, $\h(I) = \h(q_1,q_2,q_3) = 3$.  We again set $L = (q_1,q_2,q_3):I$ and note that $L$ is unmixed and $e(S/L) = 4$.  Hence our strategy for bounding the projective dimension of $I$ is one of the following:
\begin{enumerate}
\item Show $\pd(S/L) \le 5$, yielding $\pd(S/I) \le 6$ by Lemma~\ref{ses}.
\item Show that $L$ contains a linear form and hence $\pd(S/I) \le 5$ by Lemma~\ref{LinForm}.
\item Show that all quadrics in $L$ can be written in terms of at most $6$ variables.
\end{enumerate}
The last case is completed in the following section where we show that if $e(S/I) = 4$ or $5$ and any complete intersection of quadrics in $I$ can be written in terms of at most $6$ variables, then $\pd(S/I) \le 6$.

We handle several of the cases for the type of $L$ separately in the following lemmas.

\begin{lem}\label{pd1311}
If $e(S/L) = 4$ and $L$ is of type $\langle 1,3;1,1 \rangle$, then $\pd(S/I) \le 5$.
\end{lem}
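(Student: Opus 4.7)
The approach is to classify the multiplicity-$3$ prime component $\q$ of $L = (x,y,z) \cap \q$ using Theorem~\ref{e=3}, then bound $\pd(S/L)$ via Mayer--Vietoris (Lemma~\ref{sum}), and conclude with Lemma~\ref{ses}. Theorem~\ref{e=3} gives two possibilities: (A) $\q = (a,b,c)$ with $a,b$ linear forms and $c$ a cubic, or (B) $\q = (a) + I_2(\mathbf{M})$ with $a$ a linear form and $\mathbf{M}$ a $2 \times 3$ matrix of linear forms. Case (A) is ruled out immediately: since $c$ is a cubic, every degree-$2$ element of $\q$ lies in $aS_1 + bS_1 \subseteq (a,b)$, so each $q_i \in L \subseteq \q$ is contained in $(a,b)$, forcing $\h(q_1, q_2, q_3) \le 2$, contrary to hypothesis. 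So $\q$ is of type (B); if $a \in (x,y,z)$ then $a \in L$ and Lemma~\ref{LinForm} finishes the proof, so we may assume $a \notin (x,y,z)$ and that $x,y,z,a$ is a regular sequence.

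Set $\bar S := S/(x,y,z,a)$ and let $\bar f_1, \bar f_2, \bar f_3$ be the images in $\bar S$ of the $2 \times 2$ minors $f_1, f_2, f_3$ of $\mathbf{M}$. The key claim is that $I_2(\bar{\mathbf{M}})$ is either zero or principal in $\bar S$. Granting it, $\pd_{\bar S}(\bar S / I_2(\bar{\mathbf{M}})) \le 1$, so Auslander--Buchsbaum applied to the regular sequence $x,y,z,a$ gives $\pd_S(S/((x,y,z) + \q)) = 4 + \pd_{\bar S}(\bar S / I_2(\bar{\mathbf{M}})) \le 5$. Combined with $\pd(S/(x,y,z)) = 3$ and $\pd(S/\q) = 3$ (the latter because $S/\q$ is Cohen--Macaulay of codimension $3$), Lemma~\ref{sum} yields $\pd(S/L) \le 4$, and Lemma~\ref{ses} then gives $\pd(S/I) \le 5$.

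To prove the key claim I would analyze $L_2$ explicitly. Every element of $\q_2$ has the form $a\ell + \sum_{i=1}^{3} c_i f_i$ with $\ell \in S_1$ and $c_i \in k$; reducing modulo $(x,y,z)$ and using that $a$ is a non-zerodivisor on $S/(x,y,z)$, one sees that $a \ell + \sum_i c_i f_i$ lies in $(x,y,z)_2$ if and only if $\sum_i c_i \bar f_i = 0$ in $\bar S$. Let $K := \{(c_1, c_2, c_3) \in k^3 : \sum_i c_i \bar f_i = 0\}$; this description yields that the $k$-vector space $L_2 / a(x,y,z)$ has dimension at most $\dim_k K$.

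The main obstacle is ruling out $\dim_k K \le 1$. If $\dim_k K \le 1$, then the ideal generated by $L_2$ has the form $(ax, ay, az)$ or $(ax, ay, az, n_0)$ for a single extra quadric $n_0$; and one verifies that any such nontrivial $n_0$ satisfies $n_0 \in (x,y,z) \setminus (a)$ (since $n_0 \in L \subseteq (x,y,z)$, and if the corresponding $\sum_i c_i f_i$ were in $(a)$ then the element would already lie in $a(x,y,z)$). A direct variety computation gives $V(ax, ay, az, n_0) = V(x,y,z) \cup V(a, n_0)$, whose second component has codimension $2$; hence the ideal generated by $L_2$ has height at most $2$, contradicting that the height-$3$ regular sequence $q_1, q_2, q_3$ lies in $L_2$. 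Therefore $\dim_k K \ge 2$, which forces $\bar f_1, \bar f_2, \bar f_3$ to span a $k$-subspace of $\bar S_2$ of dimension at most $1$; since any nonzero element of $I_2(\bar{\mathbf{M}})$ in the positively graded domain $\bar S$ has degree at least $2$, this makes $I_2(\bar{\mathbf{M}})$ zero or principal, completing the argument.
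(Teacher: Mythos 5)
Your proof is correct and follows essentially the same route as the paper: decompose $L$ as the intersection of a linear prime with the determinantal prime from Theorem~\ref{e=3}, show that (after the linear form $a$ is reduced out) at most one of the $2\times 2$ minors survives modulo the linear prime --- since otherwise the quadrics in $L$ would generate an ideal of height at most $2$ --- so that the sum of the two components is generated by four linear forms and one quadric, and then conclude via Lemmas~\ref{sum} and~\ref{ses}. Your $\dim_k K \ge 2$ argument is just a more detailed justification of the step the paper states in one line, and your explicit exclusion of the cubic case of Theorem~\ref{e=3} is a small addition the paper leaves implicit.
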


\begin{proof} We have $L = (u,v,w) \cap L_2$, where $L_2$ is generated by a linear form $x$ and the $2\times 2$ minors of a $2\times3$ matrix $\mathbf{M}$ of linear forms.  We may assume that $\h(x,u,v,w) = 4$, since otherwise $L$ contains a linear form.  We may further assume that there are two quadrics $\Delta_1, \Delta_2 \in I_2(\mathbf{M}) \cap (u,v,w)$ that are linearly independent in modulo $(x)$, since otherwise the quadrics in $L$ generate an ideal of height at most $2$.  It follows that $J = (u,v,w) + L_2$ is generated by $4$ linear forms and a quadric form.  Hence $\pd(S/J) \le 5$.  By Lemma~\ref{sum}, $\pd(S/L) \le 4$ and by Lemma~\ref{ses}, $\pd(S/I) \le 5$.
\end{proof}

\begin{lem}\label{pd1113}
If $e(S/L) = 4$ and $L$ is of type $\langle 1,1;1,3 \rangle$, then $\pd(S/I) \le 6$ or the quadrics in $L$ can be written in terms of at most $6$ variables.
\end{lem}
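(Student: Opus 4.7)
The plan is to dissect $L$ by first writing $L=(u,v,w)\cap L_2$, where $(u,v,w)$ is a height-three linear complete intersection and $L_2$ is primary to another height-three linear prime $(x,y,z)$ with $e(S/L_2)=3$.  Proposition~\ref{1;3} then offers seven candidate structures for $L_2$.  Case~(7) is immediately ruled out: since $L$ contains $(q_1,q_2,q_3)$, a height-three ideal of quadrics, the quadrics of $L_2\supseteq L$ cannot generate a height-at-most-two ideal.  Case~(6) directly gives the alternative conclusion: all quadrics of $L_2$, and hence of $L$, lie in a subring generated by at most six linear forms.

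By Lemma~\ref{LinForm} I may assume $L$ has no linear form, so in cases~(1)--(2) (where $L_2\ni x$) we must have $\h(u,v,w,x)=4$, else $x\in L$.  The crucial observation is that $L_2\subseteq\sqrt{L_2}=(x,y,z)$, hence $L\subseteq(x,y,z)\cap(u,v,w)$.  If $\h(x,y,z,u,v,w)=6$, then $\{x,y,z,u,v,w\}$ is a regular sequence, so $(x,y,z)\cap(u,v,w)=(x,y,z)(u,v,w)$; its degree-two component is the $k$-span of the nine products $x_iu_j$, involving only the six variables $x,y,z,u,v,w$.  Thus every quadric of $L$ is expressible in at most six variables, which is the alternative.

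If instead $\h(x,y,z,u,v,w)\le 5$, some nonzero linear form $\ell$ lies in $\mathrm{span}(x,y,z)\cap\mathrm{span}(u,v,w)$.  If $\ell\in L_2$ then $\ell\in L$, a contradiction, so assume $\ell\notin L_2$ and use the short exact sequence
\[0\to S/L\to S/(u,v,w)\oplus S/L_2\to S/((u,v,w)+L_2)\to 0\]
together with Lemma~\ref{sum} to deduce
\[\pd(S/L)\le\max\{3,\pd(S/L_2),\pd(S/((u,v,w)+L_2))-1\}.\]
Since $\pd(S/L_2)\le 4$ by Proposition~\ref{1;3}, it suffices to show $\pd(S/((u,v,w)+L_2))\le 6$; then $\pd(S/L)\le 5$ and Lemma~\ref{ses} yields $\pd(S/I)\le 6$.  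The change-of-rings formula $\pd_S(S/((u,v,w)+L_2))=3+\pd_{\bar S}(\bar S/\bar L_2)$, where $\bar S=S/(u,v,w)$ and $\bar L_2$ is the image of $L_2$, reduces matters to showing $\pd_{\bar S}(\bar S/\bar L_2)\le 3$.  In cases~(1)--(2), $L_2$ is Cohen-Macaulay of codimension three over its minimal ambient polynomial ring, so the bound is immediate.  In cases~(3)--(5), the same bound follows once one checks (using the auxiliary linear forms $a,b,c,d$ as a system of parameters on $S/L_2$) that $L_2$ remains Cohen-Macaulay; the appendix lemmas referenced in the proof of Proposition~\ref{1;3} supply this.

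The main obstacle lies in cases~(3)--(5) under the low-height hypothesis $\h(x,y,z,u,v,w)\le 5$: here $L_2$ contains no linear form and involves auxiliary linear forms beyond $(x,y,z)$, and the Cohen-Macaulayness argument for $\bar L_2$ must be tracked carefully when those auxiliary forms become dependent on $\{u,v,w,x,y,z\}$.  I anticipate subdividing by which of $a,b,c,d$ collapse modulo $(u,v,w)$, mirroring the case analysis in Lemma~\ref{pd1112}, and invoking structural identities from the appendix to keep the projective dimension of $\bar L_2$ at most three.
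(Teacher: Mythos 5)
Your overall skeleton --- writing $L=(u,v,w)\cap L_2$ with $L_2$ as in Proposition~\ref{1;3}, disposing of cases (6)--(7), and aiming at $\pd(S/((u,v,w)+L_2))\le 6$ so that Lemmas~\ref{sum} and~\ref{ses} finish --- matches the paper, and your observation that $\h(x,y,z,u,v,w)=6$ forces every quadric of $L\subseteq(x,y,z)\cap(u,v,w)$ into the span of the nine products is a clean way to discharge that branch. The gap is in how you bound $\pd(S/((u,v,w)+L_2))$. You reduce to $\pd_{\bar S}(\bar S/\bar L_2)\le 3$ and justify it by asserting that $L_2$ is (or ``remains'') Cohen--Macaulay. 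In cases (3)--(5) this is false: by Lemmas~\ref{primary33}, \ref{primary35} and \ref{primary36}, $\pd(S/L_2)=4$ for these height-three ideals, so $L_2$ is not Cohen--Macaulay to begin with. Even in the most favorable situation, where $u,v,w$ form a regular sequence on $S/L_2$, your change-of-rings identity gives $\pd(S/((u,v,w)+L_2))=3+4=7$, and then $\pd(S/L)\le\max\{3,4,7-1\}=6$ followed by Lemma~\ref{ses} only yields $\pd(S/I)\le 7$. When $u,v,w$ are not regular on $S/L_2$ the situation is even less controlled, since passing to images modulo linear forms can increase projective dimension; nothing about $\bar L_2$ is ``immediate'' from properties of $L_2$. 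You acknowledge this by deferring cases (3)--(5) with $\h(x,y,z,u,v,w)\le 5$ to an anticipated further subdivision, so the proof is incomplete exactly where the work lies.

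The paper avoids analyzing $\bar L_2$ altogether. In cases (3)--(5) the degree-two part of $L_2$ is spanned by quadrics in $(x,y,z)^2$ together with two extra generators (e.g.\ $ax+by+z^2$ and $cx+dy$ in case (3)); either the quadrics of $L$ involve at most one of these extra generators modulo the $(x,y,z)^2$ part --- in which case all quadrics of $L$ are expressible in at most six linear forms and the alternative conclusion holds --- or, after absorbing terms into $a,b,c,d$, both extra generators lie in $(u,v,w)$, so $(u,v,w)+L_2=(u,v,w)+(x^2,xy,xz,y^2,yz)$ is extended from $k[u,v,w,x,y,z]$ and Lemma~\ref{regseq} gives $\pd(S/((u,v,w)+L_2))\le 6$ outright. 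A similar dichotomy on the auxiliary form $a$ handles case (2), where your argument also silently carries a seventh linear form. To salvage your route you would need essentially this same dichotomy; the bound on $\pd_{\bar S}(\bar S/\bar L_2)$ cannot be obtained by an appeal to Cohen--Macaulayness.
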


\begin{proof}
Here $L = (u,v,w) \cap L_2$, where $u,v,w \in S_1$ and $L_2$ is as in Proposition~\ref{1;3}.  If $L_2 = (x,y^2,yz,z^2)$, then $(u,v,w) + L_2 = (u,v,w,x,y^2,yz,z^2)$.  Hence $\pd(S/(L_2 + (u,v,w))) \le 6$ and by Lemmas~\ref{sum} and \ref{ses} we have $\pd(S/I) \le 6$. 

 If $L_2 = (x,y^2,yz,z^3,ay+z^2)$, then again we consider $L_2 + (u,v,w)$.  If $\h(u,v,w,x,y,z,a) = 7$, then the quadrics in $L$ are contained in $(x)$, which is not possible.  If $\h(u,v,w,x,y,z,a) \le 6$, then $\pd(S/((u,v,w)+L_2)) \le 6$ and we are done similarly to the above.

If $L_2 = (x^2,xy,xz,y^2,yz,ax+by+z^2,cx+dy)$, then after a linear change of coordinates we may assume that $ax+by+z^2,cx+dy \in (u,v,w)$, since otherwise all quadrics in $L$ are expressible in terms of at most $6$ variables.  It follows that $\pd(S/((u,v,w)+L_2)) \le 6$ and hence $\pd(S/I) \le 6$.  A similar argument works if $L_2$ is as in cases (4) and (5) of Proposition~\ref{1;3}.  In cases (6) or (7) there is nothing to show.
\end{proof}

\begin{lem}\label{pd4cases}
Suppose $e(S/L) = 4$ and $L$ is of type $\langle 2,2;1,1 \rangle$, $\langle 1,2;2,1 \rangle$, $\langle 1,1;2,2 \rangle$, $\langle 1,1,2;1,1,1 \rangle$, $\langle 1,1,1;1,1,2 \rangle$, or $\langle 1,1,1,1;1,1,1,1 \rangle$. Then either $\pd(S/I) \le 6$ or any three quadrics in $L$ can be written in terms of at most $6$ variables.
\end{lem}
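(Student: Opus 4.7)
The plan is to exhaust each of the six listed types by writing $L$ as an intersection $\bigcap_i L_i$ of its primary components and then applying Lemma~\ref{sum} iteratively, using the structural descriptions of Proposition~\ref{1;2} and Proposition~\ref{st2;2} together with Corollary~\ref{e=2}. By Lemma~\ref{ses} it suffices to establish one of the following alternatives in each case: $(i)$ a linear form lies in $L$, so that Lemma~\ref{LinForm} gives $\pd(S/I) \le 5$; $(ii)$ $\pd(S/L) \le 5$ directly, so that Lemma~\ref{ses} gives $\pd(S/I) \le 6$; or $(iii)$ any three quadrics of $L$ live in a polynomial subring on at most six variables.

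For type $\langle 2,2;1,1 \rangle$, write $L = \q_1 \cap \q_2$ with $\q_i = (x_i, y_i, Q_i)$ a Cohen-Macaulay height three prime (Corollary~\ref{e=2}), so $\pd(S/\q_i) = 3$. If $\h(x_1, y_1, x_2, y_2) = 4$, then $\q_1 + \q_2$ is generated by four linear forms and two quadrics, hence has $\pd \le 6$, and Lemma~\ref{sum} yields $\pd(S/L) \le 5$. If $\h(x_1, y_1, x_2, y_2) \le 3$, some nontrivial $k$-linear combination of $x_2, y_2$ lies in $(x_1, y_1)$, so after row operations within $\q_2$ we may assume a linear form of $\q_2$ lies in $(x_1, y_1) \subseteq \q_1$, placing a linear form in $L$. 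Types $\langle 1,2;2,1 \rangle$ and $\langle 1,1;2,2 \rangle$ are handled analogously by combining the four cases of Proposition~\ref{1;2} with the form of the mult-two component; in subcases where the $(x,y,z)$-primary piece already contains a linear form $\ell$, we check whether $\ell$ lies in the radical of the other component (placing $\ell$ in $L$) or not (yielding a short generating set for the sum and a pd bound via Lemma~\ref{sum}), while in cases (3) and (4) of Proposition~\ref{1;2} the quadrics of the individual components are already extended from six variables and one verifies the analogous statement after intersection.

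For the three-component types we peel off one component at a time. For $\langle 1,1,2;1,1,1 \rangle$ set $M = \p_1 \cap \p_2$ with $\p_1, \p_2$ the two linear primes and write $L = M \cap \q_3$; the analysis in Lemma~\ref{pd1112} gives $\pd(S/M) \le 4$, and applying the trichotomy of the previous paragraph to $M + \q_3$ completes the step. The symmetric case $\langle 1,1,1;1,1,2 \rangle$ is identical with the roles of the mult-two component swapped. Finally, for $\langle 1,1,1,1;1,1,1,1 \rangle$ write $L = \bigcap_{i=1}^4 \p_i$ with $\p_i = (x_i, y_i, z_i)$; if some pair satisfies $\h(\p_i + \p_j) = 6$, then any three quadrics of $L$ already lie in the six-variable polynomial ring generated by $\p_i + \p_j$, and otherwise the argument of Lemma~\ref{pd111111} applied to three of the four primes forces either a linear form into $L$ or the four linear primes into a common six-variable subring.

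The main obstacle is the exhaustive subcase bookkeeping required for $\langle 1,1;2,2 \rangle$, where each component falls into one of four structural types from Proposition~\ref{1;2} and the arrangement of the two sets of linear forms interacts delicately with the quadric generators; and for $\langle 1,1,1,1;1,1,1,1 \rangle$, where the six-variable reduction can fail at the pairwise level while still holding only after combining three or four components.
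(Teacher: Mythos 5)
Your overall architecture is the same as the paper's: decompose $L$ into two unmixed multiplicity-two pieces, bound $\pd(S/(L_1+L_2))$, and conclude via Lemma~\ref{sum} and Lemma~\ref{ses}, with the escape hatches of a linear form in $L$ (Lemma~\ref{LinForm}) or a six-variable reduction. However, there is a genuine gap at the point you dismiss most quickly. You assert that in cases (3) and (4) of Proposition~\ref{1;2} ``the quadrics of the individual components are already extended from six variables.'' That is true for case (4) by definition, but false for case (3): the ideal $(x,y,z)^2+(ax+by+cz,dx+ey+fz)$ only satisfies $\h(x,y,z,a,b,c,d,e,f)\ge 6$, so its quadric generators can involve up to \emph{nine} independent linear forms. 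These components are exactly where the real work lies: the paper devotes four separate pairings (its Cases IV, VII, IX, X) to them, and in each one must argue that either the two extra quadrics can be moved into the other component (else a six-variable reduction), then explicitly compute or bound $\pd(S/(L_1+L_2))$ --- in Case X even splitting off a colon ideal $(L_1+L_2):x$ to get the bound. None of this is addressed by your sketch, and the claim you substitute for it is incorrect, so the proof does not go through as written for the types involving such a component ($\langle 1,1;2,2\rangle$, $\langle 1,2;2,1\rangle$, $\langle 1,1,2;1,1,1\rangle$, $\langle 1,1,1;1,1,2\rangle$).

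Two smaller issues: for $\langle 1,1,2;1,1,1\rangle$ you cite Lemma~\ref{pd1112} to get $\pd(S/(\p_1\cap\p_2))\le 4$, but that lemma concerns type $\langle 1,1;1,2\rangle$, not an intersection of two linear primes (and the latter has projective dimension $5$, not $4$, when the primes span six independent forms); the paper instead folds $\p_1\cap\p_2$ into its five-way classification of multiplicity-two unmixed ideals, which keeps the case count at ten pairings. And in the four-linear-primes case your dichotomy ``linear form in $L$ or common six-variable subring'' omits the intermediate configurations (e.g.\ three primes sharing a linear form not in the fourth), where the paper must rule out the possibility that all quadrics of $L$ land in a height-two ideal, contradicting $\h(q_1,q_2,q_3)=3$; that contradiction is a third outcome your trichotomy does not account for.
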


\begin{proof} 
By the associativity formula \ref{Ass}, an unmixed height three ideal of multiplicity 2 is either prime, primary to a linear prime, or the intersection of two linear primes. Then, by the results of the previous section, it has one of the following $5$ forms:
\begin{itemize}
\item $(x,y,q)$, where $x,y \in S_1$ and $q \in S_2$ (this includes the case $(x,y,z) \cap (x,y,w)$)
\item $(x,a,b) \cap (x,c,d)$, where $x,a,b,c,d \in S_1$ and $\h(x,a,b,c,d) = 5$
\item $(u,v,w) \cap (x,y,z)$, where $u,v,w,x,y,z \in S_1$ and $\h(u,v,w,x,y,z) = 6$
\item $(x,y^2,yz,z^2,ay+bz)$, where $x,y,z,a,b \in S_1$, $\h(x,y,z,a,b) = 5$ 
\item $(x,y,z)^2 + (ax+by+cz,dx+ey+fz)$, where $x,y,z,a,b,c,d,e,f \in S_1$ and $\h(x,y,z,I_2) = 5$, where $I_2$ denotes the ideal of $2\times 2$ minors as in Proposition~\ref{1;2}.
\end{itemize}
Therefore, an ideal $L$ as in the statement can be written as $L=L_1 \cap L_2$, where $L_1$ and $L_2$ are ideals of one of the above $5$ forms.

Note that if either $L_1$ or $L_2$ fall into the third case, then all quadrics in $L$ can be expressed in terms of $6$ variables.  So we may ignore this case.  In the fourth case, we add the assumption that $a,b \in S_1$ since if $\deg(a), \deg(b) \ge 2$, then $q_1,q_2,q_3 \in (x,y^2,yz,z^2)$ and require at most $6$ variables to express.  We handle each of the other $10$ possible pairings separately.\\
\noindent \underline{Case I. $L_1 = (x,y,q)$, $L_2 = (w,z,q')$}.\\
Here $L_1 + L_2$ is generated by at most $4$ linear forms and $2$ quadric forms.  Hence $\pd(S/(L_1+L_2)) \le 6$.  By Lemma~\ref{sum}, $\pd(S/L) \le 5$.  By Lemma~\ref{ses}, $\pd(S/I) \le 6$.\\
\noindent \underline{Case II. $L_1 = (x,y,q)$, $L_2 = (z,ac,ad,bc,bd)$}.\\
By Lemma~\ref{inters}, we may assume that $q \in L_2$.  If $\h(x,y,z,a,b,c,d) = 7$, then all quadrics in $L$ are contained in $(z,q)$, contradicting that $\h(q_1,q_2,q_3) = 3$.  If $\h(x,y,z,a,b,c,d) \le 6$, then $\pd(S/(L_1+L_2)) \le 6$ and hence $\pd(S/I) \le 6$ by Lemmas~\ref{sum} and \ref{ses}.  \\
\noindent \underline{Case III. $L_1 = (x,y,q)$, $L_2 = (z,v^2,vw,w^2,av+bw)$}.\\
This is very similar to the previous case.  We may again assume $q \in L_2$.  If $\h(x,y,z,v,w,a,b) = 7$, then all quadrics in $L$ are contained in $(z,q)$, a contradiction.  Otherwise $\h(x,y,z,v,w,a,b) \le 6$, in which case  $\pd(S/(L_1+L_2)) \le 6$, whence $\pd(S/I) \le 6$ as before.\\
\noindent \underline{Case IV. $L_1 = (u,v,q)$, $L_2 = (x,y,z)^2 + (ax+by+cz,dx+ey+fz)$}.\\
After a possible linear change of variables, we may assume $ax+by+cz, dx+ey+fz \in L_1$, or else all quadrics in $L$ can be written in terms of at most $6$ variables.  Then $L_1 + L_2 = (x,y,z)^2 + (u,v,q)$ with $q \in (x,y,z)$.  Write $q = a'x+b'y+c'z$.  If ${\rm ht}(x,y,z,u,v,a',b',c') \le 6$, then $\pd(S/(L_1 + L_2)) \le 6$ and we are done.  If ${\rm ht}(x,y,z,u,v,a',b',c') \ge 7$, then one checks that $\pd(S/(L_1 + L_2)) = 6$.  

\noindent \underline{Case V. $L_1 = (x,ac,ad,bc,bd)$, $L_2 = (y,eg,eh,fg,fh)$}.\\
Here $L$ is the intersection of four linear primes.  If pairwise, the height of $6$ linear generators of $2$ primes is $6$, then all quadrics in $L$ could be written in terms of those $6$ linear forms only.  So we may assume that every pair of linear primes has a linear form in its intersection.  If no three linear primes have a common linear form, then we must have $4$ more linear dependencies among the above $10$ linear forms.  It follows that $\pd(S/L) \le 5$, as $L$ is an unmixed, height three ideal whose generators live in a polynomial ring with $6$ variables.  

If three of these primes have a common linear form, we may assume that either $x = y$ or $x = e$.  In the first case, $L$ contains a linear form and we are done by Lemma~\ref{LinForm}.  In the second case, we still must have $\h(y,g,h,x,a,b)\leq 5$, $\h(y,g,h,x,c,d) \le 5$.  This forces at least two more linear dependencies which, after a linear change of variables, we may take to be one of the following cases:
\begin{itemize}
\item $a=y,\, c=g$
\item $a=g,\, c=h$
\item $a=g,\, c=y + g$
\end{itemize}
One checks that in each of these cases, if there are no remaining linear dependencies among the linear generators, then all quadrics in $L$ are contained in $(x,y)$, contradicting that $L$ contains a height $3$ ideal of quadrics.  Hence there is a fourth linear dependence among the $10$ variables.  It follows again that all quadrics in $L$ can be written in terms of at most $6$ variables, hence $\pd(S/I) \le 6$.\\
\noindent \underline{Case VI. $L_1 = (w,ac,ad,bc,bd)$, $L_2 = (x,y^2,yz,z^2,ey+fz)$}.\\
If $w \in (x,y,z)$, after a linear change of variables we may assume that $w = x$ or $w = y$.  If $w = x$, then $L$ contains a linear form.  If $w = y$, then $x \in (y,a,b,c,d)$ (otherwise all quadrics in $L$ are contained in $(x,z)$, a contradiction), and similarly $z \in (y,a,b,c,d)$. So $L_1 + L_2 = (y,ac,ad,bc,bd,x,z^2,fz)$, whose generators require at most $6$ linear forms to express.  If follows that $\pd(S/(L_1+L_2)) \le 6$, whence $\pd(S/I) \le 6$.

If $w \notin (x,y,z)$ and if $\h(x,y,z,w,a,b) = 6$ or $\h(x,y,z,w,c,d) = 6$, then all quadrics can be expressed in terms of $6$ variables.  Hence we may assume that $\h(x,y,z,w,a,b) \le 5$ and $\h(x,y,z,w,c,d) \le 5$ and without loss of generality that $x = a$ and $y = c$.  Then $L \subseteq (w,xy,xd,by,bd) \cap (x,y,z) \subseteq (wx, wy, wz, xy, xd, by, bd)$ and again all quadrics in $L$ are expressible in terms of at most $6$ variables.\\
\noindent \underline{Case VII. }\\\underline{$L_1 = (s,tv,tw,uv,uw)$, $L_2 = (x,y,z)^2 + (ax+by+cz,dx+ey+fz)$}.\\
If $\h(s,x,y,z) = 4$, then all quadrics in $L$ are in $(ax+by+cz,dx+ey+fz)$, a contradiction on the height of $L$.  Hence we may assume that $s = x$ after a linear change of variables.  As usual we may assume that $ax+by+cz, dx+ey+fz \in L_1$.  Then $L_1 + L_2 = (x,tv,tw,uv,uw,y^2,yz,z^2)$.  If $\h(t,u,v,w,x,y,z) = 7$, then one checks that $\pd(S/(L_1+L_2)) = 6$.  Otherwise, one has $\pd(S/(L_1+L_2)) \le 6$ and hence $\pd(S/I) \le 6$ by Lemmas~\ref{sum} and \ref{ses}.\\
\noindent \underline{Case VIII. $L_1 = (x,y^2,yz,z^2,ay+bz)$, $L_2 = (u,v^2,vw,w^2,cv+dw)$}.\\
After a change of variables, we may suppose $cv + dw \in L_1$, or else any complete intersection of quadrics in $L_1 \cap L_2$ would be expressible in terms of at most $6$ variables.  Hence $L_1 + L_2 \subseteq (x, y^2, yz, z^2, ay+bz, u, v^2, vw, w^2)$.  By symmetry, we can also assume $ay + bz \in (u,v,w)$.  If ${\rm ht}(a,b,y,z,u,v,w) \ge 6$, then we have a contradiction.  Therefore, ${\rm ht}(x,y,z,a,b,u,v,w) \le 6$ and $\pd(S/(L_1 + L_2)) \le 6$.  It follows from Lemmas~\ref{sum} and \ref{ses} that $\pd(S/I) \le 6$.

\noindent \underline{Case IX.}\\\underline{$L_1 = (u,v^2,vw,w^2,gv+hw)$, $L_2 = (x,y,z)^2 + (ax+by+cz,dx+ey+fz)$}.\\
This case is similar to the previous one.  After a change of variables, we may assume that $ax+by+cz,dx+ey+fz \in L_1$, or else all quadrics in $L$ are expressible in terms of at most $6$ linear forms.  Similarly, we may assume that $gv + hw \in (x,y,z)$.  This forces ${\rm ht}(g,h,v,w,x,y,z) \le 5$ and so ${\rm ht}(u,v,w,g,h,x,y,z) \le 6$.  Once again we have $\pd(S/I) \le 6$.  

\noindent \underline{Case X. $L_1 =  (x,y,z)^2 + (ax+by+cz,dx+ey+fz)$,}\\\underline{$L_2 = (u,v,w)^2 + (a'x+b'y+c'z,d'x+e'y+f'z)$}.\\
As before, we may assume $\h(u,v,w,x,y,z) \le 5$ and thus that $x = u$.  We may also assume that $ax+by+cz, dx+ey+fz \in L_2$ and symmetrically that $a'x+b'y+c'z, d'x+e'y+f'z \in L_1$, after a change of variables.  
Suppose $\h(u,v,w,x,y,z) = 5$, then
\[L_1 + L_2 = (x^2, xy, xz, y^2, yz, z^2, xv, xw, v^2, vw, w^2, ax+by+cz, dx+ey+fz).\]
Moreover $(L_1 + L_2):x = L_1:x + L_2:x = (x,y,z,v,w)$ and $(L_1+L_2)+(x) = (x,y^2,yz,z^2,v^2,vw,w^2,by+cz,ey+fz)$.  Clearly $\pd(S/(x,y,z,v,w)) \le 5$.  As $ax+by+cz, dx+ey+fz \in (u,v,w)$, it follows that $b,c,e,f \in (u,v,w,x,y,z)$.  Hence $\pd(L_1+L_2+(x)) \le 5$.  Therefore $\pd(S/(L_1+L_2)) \le 5$ and $\pd(S/I) \le 5$ by Lemmas~\ref{sum} and \ref{ses}.

If $\h(u,v,w,x,y,z) = 4$, we may further suppose that $v = y$.  It follows that we may assume $c = w$ and $f = 0$ after a linear change of coordinates and generators.  Hence $L_1 + L_2 = (x^2,xy,xz,y^2,yz,z^2,xw,yw,w^2,ax+by+wz,dx+ey)$.  If ${\rm ht}(x,y,z,w,d,e) \ge 5$, one checks that $\pd(S/(L_1+L_2)) = 5$; otherwise, $L_1 + L_2$ can be expressed in terms of at most $6$ variables.  In either case, $\pd(S/I) \le 6$. 
\end{proof}

\begin{prop}\label{pd4}
Suppose $e(S/L) = 4$.  Then $\pd(S/I) \le 6$ or any three quadrics in $L$ can be written in terms of at most $6$ variables.
\end{prop}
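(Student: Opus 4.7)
The plan is to reduce to the classification of the height-three, multiplicity-four unmixed ideal $L$ according to its type via the associativity formula (Proposition~\ref{Ass}). Writing $4=\sum e_i\lambda_i$ with each $(e_i,\lambda_i)$ the multiplicity/length pair of an associated prime of $L$, the possibilities are $\langle 4;1\rangle$, $\langle 1;4\rangle$, $\langle 2;2\rangle$, $\langle 2,2;1,1\rangle$, $\langle 1,3;1,1\rangle$, $\langle 1,1;1,3\rangle$, $\langle 1,2;2,1\rangle$, $\langle 1,1;2,2\rangle$, $\langle 1,1,2;1,1,1\rangle$, $\langle 1,1,1;1,1,2\rangle$, and $\langle 1,1,1,1;1,1,1,1\rangle$. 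Six of these are settled by Lemma~\ref{pd4cases}; the two types involving a multiplicity-three component are settled by Lemmas~\ref{pd1311} and~\ref{pd1113}. Only the three ``irreducible'' types $\langle 4;1\rangle$, $\langle 1;4\rangle$, $\langle 2;2\rangle$ remain, and these will be handled by appealing to the structure theorems for each.

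For the type $\langle 4;1\rangle$ I would invoke Theorem~\ref{e=4}. In its cases~(1), (2), and~(4) the prime $L$ contains a linear form, so $\pd(S/I)\le 5$ by Lemma~\ref{LinForm}. In case~(3), $L$ is the ideal of $2\times 2$ minors of a $2\times 4$ matrix of linear forms or of a $3\times 3$ skew-symmetric matrix of linear forms; either way $L$ is Cohen--Macaulay (via the Eagon--Northcott or Buchsbaum--Eisenbud resolution), so $\pd(S/L)=3$ and Lemma~\ref{ses} yields $\pd(S/I)\le 4$.

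For $\langle 1;4\rangle$ I would apply Proposition~\ref{1;4}. Case~(1) there is precisely the ``six variables'' alternative in the statement; case~(2) asserts the quadrics of $L$ generate a height-$\le 2$ ideal, which contradicts $(q_1,q_2,q_3)\subseteq L$ having height $3$; case~(3) places a linear form in $L$, so Lemma~\ref{LinForm} gives $\pd(S/I)\le 5$; case~(4) gives $\pd(S/L)\le 5$ directly, so Lemma~\ref{ses} yields $\pd(S/I)\le 6$.

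For $\langle 2;2\rangle$ I would use Proposition~\ref{st2;2}, which gives four possible shapes for $L$. In cases~(1) and~(4), $L$ is Cohen--Macaulay (the former is the complete intersection $(x,y^2,q)$; the latter by Lemma~\ref{22gen}), whence $\pd(S/L)=3$ and $\pd(S/I)\le 4$. The hard part will be cases~(2) and~(3), where $L=(x^2,xy,y^2,q)+L'$ or $L=(x^2,xy,y^2,ax+by,q)+L'$ with $L'$ generated purely in degrees $\ge 3$. In these cases $L_2$ is the $k$-span of the listed quadric generators, so any three quadrics of $L$ lie in the polynomial subring generated by $\{x,y\}$ (resp.\ $\{x,y,a,b\}$) together with the linear forms supporting $q$. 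The alternative ``six variables'' conclusion is then immediate as soon as the support of $q$ together with these linear forms has size at most $6$. The main obstacle lies in the opposite regime, where the support of $q$ is too large: here I would exploit the length condition $\lambda(S_\q/L_\q)=2$, contrasted with $\lambda(S_\q/(x^2,xy,y^2,q)_\q)=3$, to show that $L'$ must provide a generator whose image modulo $\q_\q^2$ is a nonzero $k$-combination of $x$ and $y$. A careful analysis of the resulting minimal free resolution through Proposition~\ref{messy} should then force $\pd(S/L)\le 5$, whence $\pd(S/I)\le 6$ via Lemma~\ref{ses}.
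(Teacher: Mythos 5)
Your case division and citations for ten of the eleven types coincide with the paper's proof of Proposition~\ref{pd4}: the six composite types go to Lemma~\ref{pd4cases}, the two types with a multiplicity-three component go to Lemmas~\ref{pd1311} and~\ref{pd1113}, and your treatments of $\langle 4;1\rangle$ and $\langle 1;4\rangle$ are correct (the paper handles $\langle 4;1\rangle$ by observing that a prime containing a height-three ideal of quadrics must fall into one of the first three, all Cohen--Macaulay, cases of Theorem~\ref{e=4}; your route through Lemma~\ref{LinForm} for the degenerate cases works just as well). The genuine gap is in the $\langle 2;2\rangle$ type, in subcases (2) and (3) of Proposition~\ref{st2;2}, which you leave as an admitted sketch (``a careful analysis \dots should then force $\pd(S/L)\le 5$''). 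That plan is unlikely to succeed: there $L=(x^2,xy,y^2,q)+L'$ (resp.\ with $ax+by$ adjoined) where $L'$ is an arbitrary package of generators of degree at least three over which you have no control, and the paper's own reference \cite{HMMS2} shows that primary ideals of fixed height and multiplicity can have arbitrarily large projective dimension. Moreover, Proposition~\ref{messy} bounds the projective dimension of an \emph{almost complete intersection} in terms of the resolution of its unmixed part; it does not give a bound on $\pd(S/L)$ for the unmixed ideal $L$ itself.

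You correctly sensed the real difficulty --- when the quadric $q$ defining the multiplicity-two prime $\q$ has large support, the triple $x^2,y^2,q$ is not expressible in six variables --- but the fix is not to bound $\pd(S/L)$. What Proposition~\ref{st2;2} actually delivers in subcases (2) and (3) is complete control of the degree-two piece of $L$: every quadric of $L$ lies in the span of $x^2,xy,y^2,q$ (resp.\ $x^2,xy,y^2,ax+by,q$), hence in $(x,y)^2+(q)$ (resp.\ $(x,y)+(q)$). The paper's proof records exactly this trichotomy (``linear form in $L$, or $\pd(S/L)\le 5$, or the quadrics of $L$ are structurally constrained/expressible in few variables'') and defers the residual situations to Section~\ref{leftover}, where Lemma~\ref{xQ} and Lemma~\ref{monom} exploit the containment of $I$ in an ideal of the form $(x,Q_1,Q_2,Q_3)$ or $(x,y)^2+(Q_1,Q_2,Q_3)$ to extract a product of two linear forms among the generators and conclude $\pd(S/I)\le 6$ there (these are precisely the asterisked rows of Table~\ref{table2}, finished by Proposition~\ref{e4or5}). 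You should replace your resolution-theoretic analysis of $L$ with this observation about $L_2$ and the hand-off to Section~\ref{leftover}.
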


\begin{proof} There are the following $11$ possibilities for the type of $L$:
\begin{enumerate}
\item $\langle 4;1 \rangle$
\item $\langle 2;2 \rangle$
\item $\langle 1;4 \rangle$
\item $\langle 1,3;1,1 \rangle$
\item $\langle 1,1;1,3 \rangle$
\item $\langle 2,2;1,1 \rangle$
\item $\langle 1,2;2,1 \rangle$
\item $\langle 1,1;2,2 \rangle$
\item $\langle 1,1,2;1,1,1 \rangle$
\item $\langle 1,1,1;1,1,2 \rangle$
\item $\langle 1,1,1,1;1,1,1,1 \rangle$
\end{enumerate}
The last $6$ of these cases are covered in Lemma~\ref{pd4cases}.  The cases $\langle 1,3;1,1\rangle$ and $\langle 1,1;1,3 \rangle$ are covered in Lemmas~\ref{pd1311} and \ref{pd1113}, respectively.  If $L$ is of type $\langle 4;1 \rangle$, it is prime and one of the first $3$ cases of Theorem~\ref{e=4}.  All three are Cohen-Macaulay.  Hence $\pd(S/I) \le 4$ by Lemma~\ref{ses}.  If $L$ is of type $\langle 2;2 \rangle$ or $\langle 1;4 \rangle$, then by Propositions~\ref{1;4} and \ref{st2;2}, either $\pd(S/L) \le 5$, all quadrics in $L$ can be expressed in terms at most $6$ variables, or $L$ contains a linear form. 
\end{proof}

 \section{The Remaining Cases of Multiplicity $4$ and $5$}\label{leftover}

Throughout this section $I = (q_1,q_2,q_3,q_4)$ is an ideal generated by $4$ quadrics.  By the previous sections, we have proved the main result in all cases except when $e(S/I) = 4$ or $5$.  In those cases where we have not already produced the desired bound on $\pd(S/I)$, 
we have that any complete intersection of quadrics in $I$ can be written in terms of at most $6$ variables.  (Different choices of the complete intersection may lead to different sets of 6 variables.)  Indeed after taking a linear combinations of the generators, we may assume that $q_1,q_2,q_3$ is the complete intersection in question and apply Prop.~\ref{mult35} or Prop.~\ref{pd4}.  While it is tempting to conclude that this assumption should imply that all four quadrics are expressible in terms of at most $6$ linear forms, the following example shows this is not true.

\begin{eg} Consider the ideal $I = (ax+y^2, bx + yz, cx + z^2, dx)$ in $S = k[a,b,c,d,x,y,z]$.  Note that $\h(I) = 3$ and $I$ is generated by four quadric forms.  It is easy to check that any complete intersection of quadrics $q_1, q_2, q_3$ in $I$ can be expressed in terms of at most $6$ linear forms ($x$, $y$ and $z$ and the three linear coefficients of $x$), but it requires all $7$ variables to write all four minimal generators of $I$.  
\end{eg}

For the remainder of this section, we assume that any complete intersection of three quadrics is extended from a polynomial ring in at most $6$ variables and $e(S/I) = 4$ or $5$.

\begin{lem}\label{monom}
If $I$ contains a product of two linear forms, then $\pd(S/I)\leq 6$.
\end{lem}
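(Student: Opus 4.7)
The plan is to combine linkage with the standing $6$-variable assumption of this section. By hypothesis, the complete intersection $(q_1,q_2,q_3)$ is extended from a polynomial subring $A = k[z_1,\ldots,z_6] \subseteq S$; write $S = A[T_1,\ldots,T_r]$. After possibly relabeling generators (replacing $q_4$ by $xy$, or, if $xy$ already lies in $(q_1,q_2,q_3)$, swapping $xy$ for one of $q_1,q_2,q_3$), we may assume $q_4 = xy$. The central object to analyze is the link $L = (q_1,q_2,q_3):I = (q_1,q_2,q_3):xy$, for which Lemma~\ref{ses} gives $\pd(S/I) \leq \pd(S/L) + 1$.

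The key structural observation I would establish first is that a linear form $\ell \in S_1$ is a zero-divisor on $S/(q_1,q_2,q_3)$ if and only if $\ell \in A$ and $\ell$ is a zero-divisor on $A/(q_1,q_2,q_3)_A$. Indeed, under the polynomial extension $A \hookrightarrow S = A[T_1,\ldots,T_r]$, the associated primes of $(q_1,q_2,q_3)$ in $S$ are exactly extensions of those in $A$, and a linear form $\ell = a + \sum \beta_i T_i$ (with $a \in A_1$, $\beta_i \in k$) lies in an extended prime $\p\cdot S$ only when each $\beta_i \in \p \cap k = 0$, forcing $\ell \in A$. Since $L \neq (q_1,q_2,q_3)$ (otherwise linkage would give the contradiction $I^{un} = S$), the product $xy$ must be a zero-divisor on $S/(q_1,q_2,q_3)$, so at least one of $x,y$ lies in $A$.

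The argument now splits into two cases. If both $x,y \in A$, then $xy \in A$ and all four generators of $I$ lie in $A$; hence $I$ (and, by flatness of $S$ over $A$, also $I^{un}$) is extended from $A$, and Corollary~\ref{iungens} yields $\pd(S/I) \leq \max\{3+2,6\} = 6$. Otherwise exactly one of $x,y$ lies in $A$, and the observation above forces that one to be the zero-divisor; say $y \in A$ is the zero-divisor and $x$ is a nonzerodivisor on $S/(q_1,q_2,q_3)$. Then $(q_1,q_2,q_3):x = (q_1,q_2,q_3)$, so $L = ((q_1,q_2,q_3):x):y = (K:_A y)\cdot S$ with $K = (q_1,q_2,q_3)_A$. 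The ideal $L_A := K:_A y$ is unmixed of height $3$ in the $6$-dimensional polynomial ring $A$, so every associated prime of $A/L_A$ has height $3$ and is strictly contained in the maximal ideal of $A$. Hence $\depth_A(A/L_A) \geq 1$, giving $\pd_A(A/L_A) \leq 5$ by Auslander-Buchsbaum; flat base change provides $\pd_S(S/L) \leq 5$, and Lemma~\ref{ses} finishes.

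The main obstacle is the subcase where $xy$ originally lay in $(q_1,q_2,q_3)$: after relabeling, $xy$ becomes one of the complete-intersection generators, and the extra generator $q_4$ may no longer lie in any $6$-variable subring containing the complete intersection. In this subcase the link $(q_1,q_2,xy):q_4$ is not obviously extended from $A$, so one must argue separately---for instance by combining the short exact sequences arising from $I + (x)$ and $I:x$ (using that $x \in A$) with Theorem~\ref{ht2} applied in $S/(x)$ to bound the resulting quotients.
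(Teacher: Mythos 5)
Your main argument is correct, and it takes a genuinely different route from the paper's in the key case. Both proofs pivot on the same dichotomy---whether $x$ and $y$ are zerodivisors on $S/(q_1,q_2,q_3)$, using the fact that a linear form which is a zerodivisor modulo an ideal extended from $A=k[z_1,\ldots,z_6]$ must itself lie in $A$ because the associated primes are extended---and both dispose of the ``both in $A$'' case by observing that $I$ is then itself extended from $A$. Where you diverge is the mixed case: the paper never passes to the link, but instead uses the short exact sequence $0\to S/(I:x)\to S/I\to S/(I+(x))\to 0$, notes that when $x$ is a nonzerodivisor both outer terms have the form $(q_1,q_2,q_3)+(\ell)$ for a linear form $\ell$, bounds these by Theorem~\ref{ht2} applied to three quadrics modulo $\ell$, and concludes $\pd(S/I)\le 5$ by the depth lemma. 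You instead compute the link $L=(q_1,q_2,q_3):xy=((q_1,q_2,q_3):x):y=(q_1,q_2,q_3):y$ directly, observe that it is unmixed of height $3$ and extended from the six-variable ring $A$, deduce $\pd_S(S/L)=\pd_A(A/L_A)\le 5$ from Auslander--Buchsbaum, and finish with Lemma~\ref{ses}. That computation is sound: colon ideals commute with the flat extension $A\to S$, and $\ass(A/(C_A:_A y))\subseteq\ass(A/C_A)$ with $A/C_A$ Cohen--Macaulay of codimension $3$ gives the unmixedness and positive depth you need. The paper's route yields the marginally sharper bound of $5$ in that case, while yours reuses the linkage machinery already set up elsewhere in the paper; both are legitimate.

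The one genuine weak point is the opening reduction, which is precisely the ``main obstacle'' you flag at the end. Swapping $xy$ for one of $q_1,q_2,q_3$ does not guarantee that the three generators left over form a complete intersection, and you leave the resulting subcase with only a vague sketch, so as written the proof is incomplete there. But that subcase never needs to arise. Since $k$ is infinite and $\h(I)=3$, the $3$-dimensional subspaces $W$ of the $4$-dimensional span $V$ of the quadric generators for which $\h(WS)=3$ form a nonempty open subset of the Grassmannian of $3$-planes in $V$ (nonempty by graded prime avoidance), as do the $W$ with $xy\notin W$; these two open sets meet, so one can always rewrite $I=(q_1',q_2',q_3',xy)$ with $q_1',q_2',q_3'$ a complete intersection, which is then extended from some six-variable subring by the standing hypothesis of this section. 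This is exactly the one-line normalization the paper performs (``after possibly replacing the $q_i$ with a linear combination of the $q_j$ and $xy$''), and once it is in place your linkage argument goes through with no separate subcase.
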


\begin{proof}
Assume $xy\in I$ for linear forms $x,y$ so that $I=(xy, q_1, q_2, q_3)$. Then after possibly replacing the $q_i$ with a linear combination of the $q_j$ and $xy$, we may further assume ${\rm ht}(q_1,q_2,q_3) = 3$ and then, by the above, we may assume $q_1, q_2, q_3 \in S' = k[z_1,\ldots,z_6]$.

Set $C=(q_1,q_2,q_3)S$ and observe that $I:x = (C:x) + (y)$. Then we have the short exact sequence 
$$\mbox{(1)} \quad 0\longrightarrow S/(I:x) \longrightarrow S/I \longrightarrow S/(I + (x))\longrightarrow 0$$
 If $x$ is regular on $S/C$, then the first and last term of (1) are of the form $C + (\ell)$ where $\ell$ is a linear form. Write $C + (\ell)$ as $C_1 +(\ell)$, with $\ell$ regular on $S/C_1$. Then $\pd(S/(C + (\ell))) =1+\pd(S/C_1)$. The ideal $C_1$ need not be a complete intersection but is generated by 3 quadrics, hence by Theorem~\ref{ht2}, we have $\pd(S/C_1)\leq 4$. Then the first and last term of (1) have projective dimension at most $5$, and by Depth Lemma we obtain $\pd(S/I)\leq 5$.
 
We may then assume $x$ and $y$ are not regular on $S/C$. Since $C$ is extended from $S'$ and $x$ and $y$ are not regular on $S/C$, it follows that $x,y$ are both contained in some associated prime of $(q_1, q_2, q_3)S$, all of which are extended from $S'$ by \cite[Exercise 4.7]{AM}.  Since $x,y$ are linear forms we have ${\rm ht}(x,y,z_1,\ldots,z_6) = 6$.  Then $I$ itself is extended from $S'=k[z_1,\ldots,z_6]$ and then $\pd(S/I)\leq 6$.
\end{proof}

\begin{lem}\label{radical}
If $I^{un}$ is radical, then $\pd(S/I)\leq 6$.
\end{lem}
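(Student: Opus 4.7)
The plan is to leverage the standing section hypothesis to show that $I$ itself is extended from a polynomial subring of $S$ in at most $6$ variables, at which point Lemma~\ref{regseq} yields $\pd(S/I) \le 6$. First, I dispose of the easy case: if $I^{un}$ contains a linear form, then Lemma~\ref{LinForm} already gives $\pd(S/I) \le 5$, so I may assume $I^{un}$ contains no linear form.

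By the standing assumption of this section, the complete intersection $C = (q_1,q_2,q_3)$ is extended from a subring $A = k[y_1,\ldots,y_t]$ with $t \le 6$ and $y_1,\ldots,y_t$ linearly independent linear forms of $S$. Completing these to a basis of $S_1$, we realize $S = A[u_1,\ldots,u_{n-t}]$ as a polynomial ring over $A$. Setting $C_A = (q_1,q_2,q_3)A \subseteq A$, standard facts about polynomial ring extensions give that (i) each prime of $A$ extends to a prime of $S$, (ii) a primary decomposition of $C_A$ in $A$ extends to a primary decomposition of $C = C_A S$ in $S$, and (iii) ideal extension commutes with intersection. In particular, the minimal primes of $C$ in $S$ are exactly the extensions $\mathfrak{p}' S$ of the minimal primes $\mathfrak{p}'$ of $C_A$ in $A$.

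Since $I \supseteq C$ and $I$ has height $3$, every minimal prime of $I$, equivalently every associated prime of the unmixed ideal $I^{un}$, is a minimal prime of $C$, hence of the form $\mathfrak{p}_i' S$ for some $i$. Because $I^{un}$ is radical,
\[ I^{un} = \bigcap_{i \in T} \mathfrak{p}_i'S = \Bigl(\bigcap_{i \in T}\mathfrak{p}_i'\Bigr)S = JS, \]
where $J := \bigcap_{i \in T}\mathfrak{p}_i' \subseteq A$. Thus $I^{un}$ is extended from $A$. Since $I^{un}$ contains no linear form, neither does $J$ (as $J \subseteq JS = I^{un}$), so $J_0 = J_1 = 0$ and $(I^{un})_2 = (JS)_2 = J_2 \subseteq A_2$. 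Hence $q_4 \in (I^{un})_2 \subseteq A$, so $I = (q_1,q_2,q_3,q_4) \subseteq A$, i.e., $I$ is extended from $A$. Lemma~\ref{regseq} then gives $\pd_S(S/I) \le t \le 6$.

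The main technical content is the behavior of primary decomposition and intersection under the polynomial ring extension $A \hookrightarrow S$, which is standard; I do not anticipate any significant obstacle. The conceptual point is simply that a radical unmixed ideal containing an extended complete intersection must itself be extended, and having no linear form then forces all of its quadratic elements (including the remaining generator $q_4$) into the subring $A$.
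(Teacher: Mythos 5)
Your proof is correct and follows essentially the same route as the paper's: both deduce that $I^{un}$, being radical with all of its associated primes among the (extended) minimal primes of the extended complete intersection $C=(q_1,q_2,q_3)$, is itself extended from a $6$-variable polynomial subring. The only divergence is at the very end: the paper simply invokes Corollary~\ref{iungens}, which bounds $\pd(S/I)\le\max\{h+2,t\}$ as soon as $I^{un}$ (not necessarily $I$) is extended, whereas you do the extra work of splitting off the case where $I^{un}$ contains a linear form and then showing $q_4\in (I^{un})_2=J_2\subseteq A$ so that $I$ itself is extended and Lemma~\ref{regseq} applies --- both conclusions are valid, but the corollary would have let you skip those two steps.
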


\begin{proof}
Let $q_1,q_2,q_3$ be a regular sequence of quadrics in $I$, by assumption there is a polynomial ring $S'=k[z_1,\ldots,z_6]$ such that $q_1,q_2,q_3\in S'$. Let $C'=(q_1,q_2,q_3)S'$ and $C=C'S$ and observe that all the associated primes of $S/C$ are extended from $S'$. Since $C$ is a complete intersection in $I^{un}$ it follows that ${\rm Ass}(S/I^{un})\subseteq {\rm Ass}(S/C)$ yielding that all associated primes of $S/I^{un}$ are extended from $S'$. Since $I^{un}$ is radical then $I^{un}$ itself is extended from $S'$ and then, by Proposition \ref{iungens}, it follows that $\pd(S/I)\leq 6$.
\end{proof}

\begin{lem}\label{xQ}
If there exist quadrics $Q_1,Q_2,Q_3$ and linear forms $x, y$ such that $I\subseteq (x, Q_1, Q_2, Q_3)$ or $I \subseteq (x,y)^2 + (Q_1, Q_2, Q_3)$, then $\pd(S/I)\leq 6$.
\end{lem}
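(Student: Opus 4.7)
\medskip

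\textbf{Plan.} The strategy in both cases is to exhibit a product of two linear forms inside $I$ and then invoke Lemma~\ref{monom}. The key tool is a pigeonhole/linear-algebra argument on the degree-$2$ parts, combined with the hypothesis that $q_1,q_2,q_3,q_4$ are $k$-linearly independent (Assumption~\ref{Not}).

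For the first case, $I\subseteq (x,Q_1,Q_2,Q_3)$, I would write each generator as $q_i=a_i x+f_i$ with $a_i\in S_1$ and $f_i$ in the $k$-subspace $V=k\{Q_1,Q_2,Q_3\}\subseteq S_2$, which has dimension at most $3$. The four vectors $f_1,\dots,f_4$ must then be $k$-linearly dependent, so there exist scalars $c_i$, not all zero, with $\sum c_i f_i=0$; hence $\sum c_i q_i=(\sum c_i a_i)\,x\in I$. This element is nonzero by the assumed independence of the $q_i$, and both factors are nonzero linear forms, so $I$ contains a product of two linear forms and Lemma~\ref{monom} gives the conclusion.

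The second case, $I\subseteq (x,y)^2+(Q_1,Q_2,Q_3)$, needs one extra twist because the degree-$2$ component of the containing ideal is the sum of two three-dimensional spaces $U=k\{x^2,xy,y^2\}$ and $V=k\{Q_1,Q_2,Q_3\}$, which may not be in direct sum. I would write $q_i=e_i+f_i$ with $e_i\in U$ and $f_i\in V$, and then work in the quotient $V/(U\cap V)\cong (U+V)/U$, which has $k$-dimension at most $3$. The four images of the $f_i$ are again forced to satisfy a nontrivial relation $\sum c_i \bar f_i = 0$, which translates to $\sum c_i q_i\in U$: a nonzero binary form in $x,y$. Since $k$ is algebraically closed, this form factors as a product $\ell_1\ell_2$ of linear forms in $k\{x,y\}$, so $\ell_1\ell_2\in I$ and Lemma~\ref{monom} again finishes the proof.

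The only point requiring care is ensuring the combinations produced are nonzero, which is immediate from the hypothesis that $q_1,q_2,q_3,q_4$ are independent. There is no real obstacle beyond setting up the linear-algebra bookkeeping correctly, particularly the passage to the quotient $V/(U\cap V)$ in the second case so that the decomposition $q_i=e_i+f_i$ yields a well-defined relation.
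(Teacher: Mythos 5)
Your proposal is correct and follows essentially the same route as the paper: in each case a dimension count on the degree-two part produces a nonzero linear combination of the $q_i$ that is a product of two linear forms (using $k=\bar k$ in the second case), after which Lemma~\ref{monom} applies. The paper states this pigeonhole step in one line; you have merely spelled out the linear-algebra bookkeeping, including the nonvanishing of the combination via the independence of the $q_i$.
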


\begin{proof}
In the first case, some linear combination of the $4$ quadric generators of $I$ is $xy$. The statement now follows by Lemma~\ref{monom}.  In the second case, some linear combination of the $4$ quadric generators of $I$ is contained in $(x,y)^2$.  Since $k = \bar{k}$, every such quadric can be written as the product of two linear forms.  We are again done by Lemma~\ref{monom}.
\end{proof}

\begin{lem}\label{primaryreduction}
Let $\p$ be a linear prime, $H$ a $\p$-primary ideal. If $x$ is a linear form in $\p$ which is not contained in $H$ then $H:x$ and $(H + (x))^{un}$ are $\p$-primary and $e(S/H)=e(S/H:x)+e(S/(H + (x)))$.
\end{lem}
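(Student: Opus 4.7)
The plan is to establish the three claims in order: that $H:x$ is $\p$-primary, that $(H+(x))^{un}$ is $\p$-primary, and then to derive the multiplicity formula from a standard short exact sequence.

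First I would handle $H:x$ using the general fact about primary decomposition: if $Q$ is a $\p$-primary ideal and $x \notin Q$, then $Q:x$ is again $\p$-primary. Indeed, $H \subseteq H:x$ and, conversely, if $yx \in H$ with $y \notin H$, then since $H$ is $\p$-primary we have $y \in \sqrt{H} = \p$; thus $H:x \subseteq \p$, so $H:x$ has radical exactly $\p$, and the $\p$-primary property follows immediately. In particular $\mathrm{ht}(H:x) = \mathrm{ht}(\p)$.

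Next I would analyze $H+(x)$. Since $\p$ is a prime containing $H+(x)$, we have $\mathrm{ht}(H+(x)) \le \mathrm{ht}(\p)$. Conversely, any minimal prime $\q$ of $H+(x)$ contains $H$, hence contains $\sqrt H = \p$, so $\mathrm{ht}(\q) \ge \mathrm{ht}(\p)$. Therefore $\mathrm{ht}(H+(x)) = \mathrm{ht}(\p)$ and $\p$ is the unique minimal prime of $H+(x)$ of minimum height. It follows that $(H+(x))^{un}$ is $\p$-primary.

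Finally, for the multiplicity formula, I would use the graded short exact sequence
\[
0 \longrightarrow (S/(H:x))(-1) \xrightarrow{\;\cdot x\;} S/H \longrightarrow S/(H+(x)) \longrightarrow 0,
\]
noting that $x$ has degree $1$ and the kernel of multiplication by $x$ on $S/H$ is precisely $(H:x)/H$. By steps 1 and 2 all three modules have the same Krull dimension $d = \dim S - \mathrm{ht}(\p)$, so taking the leading coefficients of the Hilbert polynomials (the twist by $-1$ does not affect them) and invoking additivity of multiplicity for short exact sequences of modules of equal dimension yields
\[
e(S/H) \;=\; e(S/(H:x)) + e(S/(H+(x))),
\]
as desired. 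The only delicate point, and where I would be careful, is checking that $\dim S/(H+(x)) = d$; this is exactly where the height computation in step 2 is used, and is the reason we cannot get away with assuming $x$ is merely an element of $\p$ without tracking what happens to the minimal primes of $H+(x)$.
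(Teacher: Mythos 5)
Your proof is correct and follows essentially the same route as the paper: both establish that $H:x$ is $\p$-primary and that $\p$ is the unique minimal prime of $H+(x)$ of minimal height, then apply additivity of multiplicity to the short exact sequence $0 \to (S/(H:x))(-1) \xrightarrow{\cdot x} S/H \to S/(H+(x)) \to 0$ of modules of equal dimension. (One trivial slip: in showing $H:x \subseteq \p$, the primary condition applied to $xy \in H$ uses the hypothesis $x \notin H$, not ``$y \notin H$'' as written; the conclusion is unaffected.)
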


\begin{proof}
Since $\ass(S/(H:x)) \subseteq \ass(S/H)$, it is clear that $H:x$ is $\p$-primary; $(H + (x))^{un}$ is $\p$-primary because $H$ is unmixed, of equal height and contained in it.  The second statement follows by additivity of $e(-)$ applied to the following short exact sequence where all modules have the same dimension:
$$0 \longrightarrow S/(H:x) \longrightarrow S/H \longrightarrow S/(H + (x)) \longrightarrow 0$$
together with the well-known fact that $e(S/(H,x)) = e(S/(H + (x))^{un})$. 
\end{proof}

\begin{prop}\label{e4or5}
If $e(R/I)=4$ or $5$ and every complete intersection of 3 quadrics in $I$ can be written in terms of at most 6 linear forms, then $\pd(R/I)\leq 6$.
\end{prop}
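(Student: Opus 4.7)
I follow the notation of Assumption~\ref{Not} and fix a complete intersection $C = (q_1, q_2, q_3)$ which, by hypothesis, lies in a polynomial subring $S' = k[y_1,\ldots,y_6] \subseteq S$. The plan is to reduce to one of the structural lemmas of this section (Lemmas~\ref{monom}, \ref{radical}, \ref{xQ}) or to Corollary~\ref{iungens}. I first dispatch two quick reductions: if $I^{un}$ is radical, Lemma~\ref{radical} gives $\pd(S/I) \le 6$; if $I$ contains a product of two linear forms, Lemma~\ref{monom} does. So assume henceforth that neither holds. Since $C$ is extended from $S'$, every associated prime of $S/C$ is extended from a prime of $S'$ (as in the proof of Lemma~\ref{monom}). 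Because $I^{un}$ is unmixed with $\h(I^{un}) = \h(C) = 3$ and $C \subseteq I^{un}$, every minimal prime of $I^{un}$ is also a minimal prime of $C$, and hence extended from $S'$.

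Next I would enumerate the unmixed types $\langle e_1,\ldots,e_m;\lambda_1,\ldots,\lambda_m\rangle$ of $I^{un}$ with $\sum_i e_i\lambda_i \in \{4,5\}$, using Theorems~\ref{e=3} and \ref{e=4} for the possible primes and the classifications of Sections~\ref{primary} and \ref{22struct} (Propositions~\ref{1;2}, \ref{1;3}, \ref{1;4}, \ref{st2;2}) for the possible primary components. For each type, Lemma~\ref{primaryreduction} lets me strip off linear forms appearing in the primary decomposition. Combining this with the residual cases already isolated by the previous two sections (Propositions~\ref{mult35} and \ref{pd4}), I would reduce to a finite list of configurations in which $I^{un}$ is ``almost'' extended from $S'$, with the only deviation coming from the ``new'' linear forms appearing in $q_4$ but not in $S'_1$.

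The main obstacle, and the crux of the argument, is to exploit the stronger hypothesis that \emph{every} complete intersection of three quadrics in $I$ is extended from six linear forms. Applying this to complete intersections of the form $q_1 + \lambda q_4,\; q_2 + \mu q_4,\; q_3$ and similar perturbations for varying scalars produces strong constraints forcing the ``new'' linear forms of $q_4$ to enter in a very restricted manner. I would show, case by case along the list obtained in the previous paragraph, that these constraints imply one of: (a) some linear combination of $q_1,\ldots,q_4$ factors as a product of two linear forms, so Lemma~\ref{monom} applies; (b) $I \subseteq (x, Q_1, Q_2, Q_3)$ or $I \subseteq (x,y)^2 + (Q_1, Q_2, Q_3)$ for some linear forms $x,y$ and quadrics $Q_i$, so Lemma~\ref{xQ} applies; or (c) $I^{un}$ itself is extended from a polynomial subring on at most six linear forms, so Corollary~\ref{iungens} gives $\pd(S/I) \le 6$. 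The case analysis mirrors those of Sections~\ref{e35} and \ref{e4}, now with the extra leverage provided by the ``six variables'' hypothesis on every complete intersection in $I$, rather than only on $L = (q_1,q_2,q_3):I$.
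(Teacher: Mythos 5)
Your opening reductions are sound and match the paper's: invoking Lemma~\ref{radical} for the radical case, Lemma~\ref{monom} when $I$ contains a product of linear forms, and observing that every minimal prime of $I^{un}$ is a minimal prime of the extended complete intersection $C$ and hence extended from $S'$. The difficulty is that everything after that is a plan rather than a proof. The entire substance of the argument lies in the case-by-case verification that each possible primary component forces one of your outcomes (a), (b), or (c), and you explicitly defer it (``I would enumerate\dots'', ``I would show, case by case\dots''). That verification is not routine: for example, when $I^{un}$ has a component $J$ of type $\langle 1;3\rangle$ with $J=(x^2,xy,xz,y^2,yz,ax+by+z^2,cx+dy)$, none of (a) or (b) is immediate; the paper's proof has to arrange $q_4\in(x,y,z)^2$, note that $(x,y,z)$ is an associated prime of $(q_1,q_2,q_3)$ and hence extended from the six-variable subring, and only then conclude $q_4$ is extended. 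Similarly, the $\langle 1;4\rangle$ and $\langle 1;5\rangle$ cases require the reduction via $J:\p$ and Lemma~\ref{primaryreduction} together with a separate height analysis, and the $\langle 1;2\rangle$ case with $J=(x,y^2,yz,z^2,ay+bz)$ needs the specific trick of exhibiting two further complete intersections $(q_1,q_2,q_4)$ and $(q_1,q_3,q_4)$ to force all four quadrics into a common six-variable subring. Your suggestion of perturbing to $q_1+\lambda q_4,\,q_2+\mu q_4,\,q_3$ is in the right spirit (the paper uses exactly this kind of leverage), but without carrying out the analysis there is no way to check that every configuration actually lands in (a), (b), or (c) --- and a priori some might not.

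A second, smaller gap: knowing that every minimal prime of $I^{un}$ is extended from $S'$ does not by itself imply that the primary components are extended, which is what Corollary~\ref{iungens} needs in your outcome (c). The paper only draws that conclusion in the special situation where every component is prime or of the form $(x,y,z^2)$; for the genuinely non-reduced components the argument has to go through the explicit structure results of Sections~\ref{primary} and~\ref{22struct} instead. As written, your proposal identifies the correct toolkit but does not yet constitute a proof of the proposition.
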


\begin{proof}
Consider the primary components of $I^{un}$.   The hypotheses allow us to consider three cases: when $I^{un}$ is radical or has components of the form $(x,y,z^2)$, when $I^{un}$ has a component primary to a multiplicity $2$ prime that is not itself prime, and when $I^{un}$ has a component primary to a linear prime which is neither prime nor of the form $(x,y,z^2)$.\\
\noindent \underline{Case 1: There is a primary component $J$ of type $\langle 2; 2 \rangle$.}\\
Then $J$ has one of the $4$ forms listed in Proposition~\ref{st2;2}.  All four cases are covered by Lemma~\ref{xQ}.  \\
\noindent \underline{Case 2: There is a primary component $J$ of type $\langle 1; e \rangle$, with $e \ge 2$.}\\
First consider the case when $e = 3$.  Then $J$ has one of the $7$ forms in Proposition~\ref{1;3}.  Cases (1), (2), (4) and (5) are covered by Lemma~\ref{xQ}.  Cases (6) and (7) easily follow as well.  In the remaining case (3), we have $J = (x^2,xy,xz,y^2,yz,ax+by+z^2,cx+dy)$.  By expressing $q_1,\ldots,q_4$ in terms of the minimal generators of $I$ and taking linear combinations we may ensure that $q_4 \in (x,y,z)^2$ and that $q_1, q_2, q_3$ for a complete intersection.  Hence we may assume that $q_1, q_2, q_3$ are extended from a polynomial ring $R$ in at most $6$ variables and since $(x,y,z)$ minimally contains $(q_1, q_2, q_3)$, the prime $(x,y,z)$ is also extended from $R$.  Hence $q_4$ is extended from $R$ and we are done by Lemma~\ref{regseq}.\\
If $e = 5$, then let $\p = (x, y, z)$ be the linear prime for which $J$ is $\p$-primary.  Let $\ell \in \p - J$.  By Lemma~\ref{primaryreduction}, $J:\ell$ and $J + (\ell)$ are $\p$-primary and $e(S/(J:\ell)) + e(S/(J+(\ell))) = 5$.  Either one of these multiplicities is $3$ and we are done by the previous argument, or one of these multiplicities is $4$ and we reduce to the the next case.

If $e = 4$, we consider $J:\p$, which contains $J$ and is $\p$-primary with strictly smaller multiplicity.  If $J:\p$ contains a linear form $\ell$, then $J:\ell = \p$.  Hence $e(S/(J + (\ell))) = 3$ and we are again done by a previous case.  If $J:\p$ does not contain a linear form, then either $e(S/(J:\p)) = 3$ or all quadrics in $J:\p$, and hence $I$, can be written in terms of $6$ variables, in which case we are done again.  Otherwise by Proposition~\ref{1;2}, $J:\p = (x,y,z)^2 + (ax+by+cz, dx+ey+fz)$, where $a,\ldots,f \in S_1$.  For any ideal $I$ generated by $4$ quadrics contained in this ideal, either the generators are expressible in terms of $6$ linear forms, or without loss of generality we can take  $q_1 = ax + by + cz+q_1'$, $q_2 = dx + ey + fz+q2'$, where $q_1', q_2', q_3, q_4 \in (x,y,z)^2$.  Since $(x,y,z) \in \ass(S/I) \subseteq \ass(S/(q_1, q_2, q_3))$, and since $q_1, q_2, q_3$ are extended from a $6$ variable polynomial ring, we may take $x, y, z$ to be $3$ of these variables.  It follows that all the generators of $I$ can be written in terms of at most $6$ variables and so $\pd(S/I) \le 6$.

If $e = 2$, then $J$ is of one of the $4$ forms in Proposition~\ref{1;2}.  The previous arguments cover cases (3) and (4).  In case (2), $J = (x, y^2, yz, z^2, ay + bz)$ with ${\rm ht}(x,y,z,a,b) = 5$.  Either $I$ contains a monomial or else we can write $I = (q_1, q_2, q_3, q_4)$ with $q_1 = c_1x + y^2$, $q_2 = c_2x + yz$, $q_3 = c_3x + z^2$ and $q_4 = c_4x + ay + bz$.  If ${\rm ht}(c_1, c_2, c_3, c_4, x, y, z, a, b) = 5$, we are done.  Otherwise, after possibly taking a linear combination of the generators we may assume ${\rm ht}(c_4, x, y, z, a, b) = 6$ and $(q_1, q_2, q_4)$ and $(q_1, q_3, q_4)$ form a compete intersections.  It follows that all $4$ quadrics are extended from $k[c_4, x, y, z, a, b]$ and hence $\pd(S/I) \le 6$.\\
\noindent \underline{Case 3: Every primary component of $I^{un}$ is prime or is equal to $(x, y, z^2)$.}\\
  Since $\ass(S/I^{un}) \subseteq \ass(S/(q_1, q_2, q_3))$, each associated prime of $I^{un}$ is extended from a single polynomial ring $R'$ in at most $6$ variables by Proposition~\ref{iungens}.  In the case of a component of the form $(x, y, z^2)$, the prime ideal $(x,y,z)$ is extended from $R'$ and hence $(x, y, z^2)$ is as well.  Since all primary components of $I^{un}$ are extended from $R'$ it follows from that $\pd(S/I) \le 6$.
\end{proof}

\section{Main Result}\label{main}

\begin{thm}\label{pd6}
Let $S$ by a polynomial ring over a field $K$, and let $I = (q_1,q_2,q_3,q_4)$ be an ideal of $S$ minimally generated by $4$ homogeneous polynomials of degree $2$.
Then ${\rm pd}(S/I)\leq 6$.  Moreover, this bound is tight.
\end{thm}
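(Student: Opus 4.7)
The plan is essentially to assemble the case analyses developed throughout the paper. First I would reduce to the case when $k$ is algebraically closed: the extension $S \to S \otimes_k \bar{k}$ is faithfully flat, so both $\pd(S/I)$ and the property of being minimally generated by four quadrics are preserved, and Assumption~\ref{Not} may be invoked. Next I would dispose of the trivial heights: if $\h(I) \in \{1,4\}$ then $\pd(S/I) \le 4$ as noted at the start of Section~\ref{basic}, and if $\h(I) = 2$ then Theorem~\ref{ht2} with $n = 4$ gives $\pd(S/I) \le 2\cdot 4 - 2 = 6$ directly.

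The main case is $\h(I) = 3$. Here Theorem~\ref{e6} forces $e := e(S/I) \le 2^3 - 3 + 1 = 6$, and when $e = 6$ the same theorem gives that $S/I$ is Cohen-Macaulay, whence $\pd(S/I) = 3$. I would then parcel the remaining multiplicities among the propositions already established: $e = 1$ by Proposition~\ref{e=1}, $e = 2$ by Proposition~\ref{mult2}, $e \in \{3,5\}$ by Proposition~\ref{mult35}, and $e = 4$ by Proposition~\ref{pd4}. In the latter three cases the conclusion is either $\pd(S/I) \le 6$ outright, or else the structural statement that the quadrics in some complete intersection of three generators of $I$ (equivalently, in the directly linked ideal $L = (q_1,q_2,q_3):I$) can be written using at most six linear forms. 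These residual $6$-variable scenarios are precisely what Proposition~\ref{e4or5} was engineered to handle: it combines Corollary~\ref{iungens} with the flat extension observation of Lemma~\ref{regseq} to promote any such $6$-variable restriction to the bound $\pd(S/I) \le 6$.

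Tightness is routine: I would simply quote the explicit examples from \cite{Mc} of ideals generated by four quadrics with $\pd(S/I) = 6$, giving equality in Theorem~\ref{pd6}.

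The actual obstacle here is not this final assembly, which is essentially bookkeeping, but the many upstream propositions whose conjunction is being invoked, most notably the multiplicity~$4$ analysis in Proposition~\ref{pd4} (which in turn rests on the $\langle 2;2\rangle$-structure theorem of Section~\ref{22struct} and the linear-prime-primary classification of Section~\ref{primary}) and the residual-case argument of Proposition~\ref{e4or5}. As the authors note, it is exactly the ability to clean up the residual ``$6$-variable'' cases in multiplicities $3,4,5$ that brings the final bound down to the sharp value of $6$, rather than the weaker bound of $9$ obtained in the earlier preprint.
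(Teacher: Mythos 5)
Your proposal matches the paper's proof of Theorem~\ref{pd6} essentially verbatim: reduction to $\bar{k}$ by faithfully flat base change, disposal of heights $1,2,4$ via Theorem~\ref{ht2}, the multiplicity bound and Cohen--Macaulayness from Theorem~\ref{e6} for $e=6$, and the dispatch of $e=1,2,\{3,5\},4$ to Propositions~\ref{e=1}, \ref{mult2}, \ref{mult35}, \ref{pd4}, with Proposition~\ref{e4or5} cleaning up the residual six-variable cases and \cite{Mc} supplying tightness. This is the same assembly the authors perform in Section~\ref{main}, so there is nothing further to add.
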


\begin{proof} By extension of scalars, we may assume that $K$ is algebraically closed.  As previously noted, $\h(I) \le 4$.  If $\h(I) = 1$ or $4$, then $\pd(S/I) = 4$.  If $\h(I) = 2$, then $\pd(S/I) \le 6$ by Theorem~\ref{ht2}.  So we may assume that $\h(I) = 3$.

By Theorem~\ref{e6}, we have $e(S/I) \le 6$.  Moreover, if $e(S/I) = 6$, then $\pd(S/I) = 3$.  The cases where $e(S/I) = 1,2,3,4,5$ are proved in Propositions~\ref{e=1}, \ref{mult2}, \ref{mult35}, \ref{pd4} and \ref{e4or5}.  
\end{proof}

\begin{table}
\begin{tabular}{|l|l|l|l|}
\hline
{$\mathbf{e}(S/I)$} & 
$\langle\underline{e};\underline{\lambda}\rangle$ for $I^{un}$& Bound for $\pd(S/I)$ & Justification\\
\hline\hline
{\bf 1}&$\langle1;1\rangle$&4& Proposition \ref{e=1}\\
\hline
{\bf 2}&$\langle2;1\rangle$& 4 & Lemma \ref{iuncm} \\
&$\langle1;2\rangle$ & 5 & Proposition \ref{mult2}\\
&$\langle1,1;1,1\rangle$& 5 & Proposition \ref{mult2}\\
\hline
{\bf 3} & $\langle3;1\rangle$ & 5 &  Lemma \ref{LinForm}\\
&$\langle1;3\rangle$ & 6 & Lemma \ref{pd13}\\
&$\langle1,2;1,1\rangle$ & 6 & Lemma \ref{pd1211}\\
&$\langle1,1;1,2\rangle$ & 6 &  Lemma \ref{pd1112}\\
&$\langle1,1,1;1,1,1\rangle$ & 6 & Lemma \ref{pd111111}\\
\hline
{\bf 6}& {\rm any}& 3 & Theorem \ref{e6}\\
\hline \hline
\hline
{$\mathbf{e}(S/L)$} & 
$\langle\underline{e};\underline{\lambda}\rangle$ for $L$& Bound for $\pd(S/I)$ & Justification\\
\hline\hline
{\bf 4}&$\langle4;1\rangle$ & 4 & Lemma  \ref{ses}\\ 
&$\langle2;2\rangle$ & 5 &Proposition \ref{st2;2}* \\
&$\langle1;4\rangle$ & 6 & Proposition \ref{1;4}* \\
&$\langle1,3;1,1\rangle$ & 5& Proposition \ref{pd1311} \\
&$\langle1,1;1,3\rangle$ & 6 & Proposition \ref{pd1113}* \\
&$\langle2,2;1,1\rangle$ &  6 & Proposition \ref{pd4cases}* \\
&$\langle1,2;2,1\rangle$ & 6& Proposition \ref{pd4cases}* \\
&$\langle1,1;2,2\rangle$ & 6&Proposition \ref{pd4cases}* \\
&$\langle1,1,2;1,1,1\rangle$ & 6 &Proposition \ref{pd4cases}* \\
&$\langle1,1,1;1,1,2\rangle$ & 6 & Proposition \ref{pd4cases}* \\
&$\langle1,1,1,1;1,1,1,1\rangle$ & 6  & Proposition \ref{pd4cases}*\\
\hline
{\bf 3} & $\langle3;1\rangle$ & 5 &  Lemma \ref{LinForm}\\
&$\langle1;3\rangle$ & 6 & Lemma \ref{pd13}*\\
&$\langle1,2;1,1\rangle$ & 5 & Lemma \ref{pd1211} \\
&$\langle1,1;1,2\rangle$ & 6 & Lemma \ref{pd1112}* \\
&$\langle1,1,1;1,1,1\rangle$ & 6 & Lemma \ref{pd111111}*\\
\hline
\end{tabular}

\caption{Bounds on $\pd(S/I)$ where $I$ is a height $3$ ideal generated by $4$ quadrics}
\label{table2}
\end{table}

A summary of the many cases in the proof of Theorem~\ref{pd6} is given in Table~\ref{table2}.  The Lemma or Proposition covering each case is referenced on the right.  Asterisks denote cases that are also covered by Proposition~\ref{e4or5}.

We remark that there are canonical examples of ideals $I_2$ and $I_3$ generated by $4$ quadrics of heights $2$ and $3$, respectively, and with $\pd(S/I_2) = \pd(S/I_3) = 6$.  Namely, we can take $I_2 = (x^2, y^2, ax+by, cx+dy)$ and $I_3 = (x^2, y^2, z^2, ax+by+cz)$ in the polynomial ring $S = k[a,b,c,d,x,y,z]$.  These correspond to the examples $I_{2,2,2}$ and $I_{3,1,2}$ from \cite{Mc}.   More generally, we previously posed the following question:

\begin{qst}[{ \cite[Question 6.2]{HMMS}}]\label{qst2}
Let $S$ be a polynomial ring and let $I$ be an ideal of $S$ generated by $n$ quadrics and having  $\h I=h$. Is it true that $\pd (S/I)\leq h(n-h+1)$?
\end{qst}

There are examples in \cite{Mc} achieving this bound for all possible integers $h$ and $n$.  This paper now answers this question affirmatively if $n \le 4$, while our previous paper \cite{HMMS} gave an affirmative answer if $h \le 2$.  Note that if the question has a positive answer, it would give a bound on the projective dimension of ideals generated by $n$ quadrics that is quadratic in $n$ -- much smaller than the known bounds of Ananyan-Hochster.

\appendix
\section{Resolutions of Primary Ideals}\label{append}

In this section we collect the details of the many unmixed and primary ideals listed in the structure theorems in Sections~\ref{primary} and \ref{22struct}.  The technique is the same for each of the following: one resolves the given ideal generically and uses the Buchsbaum-Eisenbud exactness criteria to check the conditions that ensure the resolution is exact.  If $F_\*$ is the resolution of $S/I$ and $\partial_i$ denotes the $i^{\text th}$ differential map, then this amounts to checking that $\h(I_{r_j}(\partial_j)) \ge j$ for all $j$.  Here $r_j = \sum_{i = j}^{p} (-1)^{p-i} \rank(F_i)$ denotes the expected rank of the $j$th map, and $I_r(\partial_j)$ denotes the ideal of $r \times r$ minors of the matrix associated to the $j$th differential.  Moreover one checks that $\h(I_{r_j}(\partial_j)) \ge j+1$ for $j > \h(I)$ to ensure that the ideal is unmixed (cf. \cite[Proposition 2.4]{HMMS2}).  In the case that $\h(I) = 3$, this amounts to showing that 
\[\h(I_{r_j}(\partial_j)) \ge \begin{cases} j & \text{ if } j = 1,2,3\\
j+1 & \text{ if } j \ge 4\end{cases}.\]
We do this explicitly for Lemmas~\ref{primary6} and \ref{primary21}.  The reader may check the remaining ones with the help of a computer algebra system, such as Macaulay2 \cite{Mac2}.  We have posted a supplementary Macaulay2 \cite{Mac2} file at \verb+http://www.math.unl.edu/~aseceleanu2/research/fourQuadrics.m2+

Throughout this section $x,y,z$ represent independent linear forms so that $\p = (x,y,z)$ is a height three linear prime ideal.  

\begin{lem}\label{primary6}
If
\[
L = (x^2,y^2,z^2,xyz,Cxy+Bxz+Ayz),\]
where $\h(x,y,z,A,B,C) \ge 5$, then $L$ is $(x,y,z)$-primary, $\pd(S/L) = 4$ and $e(S/L) = 6$.
\end{lem}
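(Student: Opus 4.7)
The plan is to verify three facts in turn: that $\sqrt{L}=\p=(x,y,z)$, that the multiplicity $e(S/L)=6$, and that $S/L$ admits a minimal free resolution of length exactly four which simultaneously certifies that $L$ is unmixed (and hence $\p$-primary).

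First I would check $L\subseteq \p$ by inspection of the generators and $\p^3\subseteq L$ by rewriting each cubic monomial in $x,y,z$ as a multiple of the given generators: $x^3,x^2y,xy^2,\ldots$ all arise from linear multiples of $x^2,y^2,z^2$, while $xyz$ is already listed. For the multiplicity, since $e(S/\p)=1$ the associativity formula (Proposition~\ref{Ass}) reduces to computing $\lambda(S_\p/L_\p)$, which I would do via the associated graded with respect to $\mathfrak{m}=\p S_\p$. In $\mathfrak{m}^2/\mathfrak{m}^3$ the classes of $x^2,y^2,z^2$ give three independent relations, and the hypothesis $\h(x,y,z,A,B,C)\geq 5$ forces at least one of the residues $\bar A,\bar B,\bar C$ in $S_\p/\p S_\p$ to be nonzero, so $Cxy+Bxz+Ayz$ contributes one further independent relation. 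The graded dimensions over $K=S_\p/\mathfrak{m}$ are therefore $1,3,2,0,\ldots$, giving length $6$.

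For the projective dimension and unmixedness, I would construct a minimal free resolution $F_\*\to S/L$ explicitly and apply the Buchsbaum--Eisenbud exactness criteria as described in the introduction of the appendix. The first syzygies are all homogeneous of degree $4$ and come from three sources: the three Koszul-like relations among $x^2,y^2,z^2$; the three relations $yz\cdot x^2-x\cdot xyz=0$ and its cyclic analogues linking $x^2,y^2,z^2$ to $xyz$; and the three relations obtained by multiplying $f_5=Cxy+Bxz+Ayz$ by each of $x,y,z$ (for instance $x f_5-A\,xyz-(Cy+Bz)x^2=0$). After determining the full set of independent first syzygies and computing the second and third syzygies, I would verify exactness of the resulting length-$4$ complex by checking, for each differential $\partial_j$, the grade condition $\grade I_{r_j}(\partial_j)\geq j$, and for unmixedness the stronger condition $\grade I_{r_j}(\partial_j)\geq j+1$ whenever $j\geq 4$.

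The main obstacle will be bounding these grades for the higher differentials, whose entries mix the linear forms $A,B,C$ with $x,y,z$. The hypothesis $\h(x,y,z,A,B,C)\geq 5$ is precisely what is needed to push the required inequalities through, but because it still permits some of $A,B,C$ to be linearly dependent on one another or on $x,y,z$, the argument cannot simply invoke genericity and must cover the degenerate subcases uniformly. A convenient strategy is to first resolve the fully generic version of $L$, treating $A,B,C$ as fresh indeterminates extending $x,y,z$, and then specialize; the generic resolution will persist as long as the grades of the relevant minor ideals are preserved, and the hypothesis $\h(x,y,z,A,B,C)\geq 5$ is tuned exactly to guarantee this persistence. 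Once exactness is confirmed, we conclude $\pd(S/L)=4$ and $L$ unmixed, which together with $\sqrt{L}=\p$ gives the $\p$-primary property.
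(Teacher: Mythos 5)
Your proposal is correct and follows essentially the same route as the paper: an explicit length-$4$ free resolution checked via the Buchsbaum--Eisenbud criterion (with the strengthened grade condition $\h(I_{r_j}(\partial_j))\ge j+1$ for $j\ge 4$ certifying unmixedness), combined with $\sqrt{L}=\p$ and the local length $\lambda(S_\p/L_\p)=6$. The only cosmetic difference is that the paper discharges the grade conditions by exhibiting pure powers of $x,y,z$ inside the relevant minor ideals of the explicitly written differentials, rather than by your generic-resolution-and-specialization framing, and your direct count via $\p^3\subseteq L$ makes the length computation more explicit than the paper does.
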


\begin{proof}   Consider the complex
      \[\xymatrix{
S & \ar[l]_{\partial_1} S^5 & \ar[l]_{\partial_2} S^{9} & \ar[l]_{\partial_3} S^6 & \ar[l]_{\partial_4} S^1 & \ar[l] 0,}      \]
where
$\partial_1 = { \begin{pmatrix}x^{2}&
      y^{2}&
      z^{2}&
      C x y+B x z+A y z&
      x y z\\
      \end{pmatrix}}$\\
$\partial_2 = { \begin{pmatrix}-C y-B z&
      0&
      {-y^{2}}&
      0&
      {-y z}&
      0&
      {-z^{2}}&
      0&
      0\\
      0&
      -C x-A z&
      x^{2}&
      0&
      0&
      {-x z}&
      0&
      0&
      {-z^{2}}\\
      0&
      0&
      0&
      B x+A y&
      0&
      0&
      x^{2}&
      {-x y}&
      y^{2}\\
      x&
      y&
      0&
      {-z}&
      0&
      0&
      0&
      0&
      0\\
      {-A}&
      {-B}&
      0&
      C&
      x&
      y&
      0&
      z&
      0\\
      \end{pmatrix}}$\\
$\partial_3 = { \begin{pmatrix}{-y}&
      {-z}&
      0&
      0&
      0&
      0\\
      x&
      0&
      {-z}&
      0&
      0&
      0\\
      C&
      0&
      0&
      z&
      0&
      0\\
      0&
      {-x}&
      {-y}&
      0&
      0&
      0\\
      B&
      C&
      0&
      {-y}&
      {-z}&
      0\\
      {-A}&
      0&
      C&
      x&
      0&
      {-z}\\
      0&
      B&
      0&
      0&
      y&
      0\\
      0&
      {-A}&
      {-B}&
      0&
      x&
      y\\
      0&
      0&
      A&
      0&
      0&
      x\\
      \end{pmatrix}}$\\
$\partial_4 = { \begin{pmatrix}{-z}\\
      y\\
      {-x}\\
      C\\
      {-B}\\
      A\\
      \end{pmatrix}}$\\
      Note that $x^2 \in I_1(\partial_1)$, $x^6, y^6 \in I_4(\partial_2)$ and $x^5,y^5,z^5 \in I_5(\partial_3)$ and $I_1(\partial_4) = (x,y,z,A,B,C)$.  It follows that this is exact and forms a resolution of $L$, $\pd(S/L) = 4$ and $L$ is unmixed.  As $\sqrt{L} = \p$ and $\lambda(S_\p/L_\p) = 6$, it follows that $L$ is $(x,y,z)$-primary and $e(S/L) = 6$.
\end{proof}

\begin{lem}\label{primary21}
If 
\[J = (x,y,z)^2 + (ax+by+cz,dx+ey+fz),\]
where $\h\left((x,y,z)+I_2\begin{pmatrix}a&b&c\\d&e&f\end{pmatrix}\right) = 5$, then $J$ is $(x,y,z)$-primary, $\pd(S/J) = 4$ and $e(S/J) = 2$.  Moreover, 
\begin{align*}
L &= (x^2,y^2,z^2):J \\
&= (x^2,y^2,z^2,xyz, (ae-bd)xy + (af-cd)xz + (bf-ce)yz)\\
\end{align*}
and $\pd(S/L) = 4$.
\end{lem}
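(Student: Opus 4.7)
The plan is to follow the same strategy as Lemma~\ref{primary6}: produce an explicit complex that is a candidate minimal free resolution of $S/J$ and verify its exactness via the Buchsbaum--Eisenbud criterion, then leverage linkage together with Lemma~\ref{primary6} for the description of $L$.

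First, I would settle the basic invariants. Since $(x,y,z)^2 \subseteq J \subseteq (x,y,z) = \p$, we get $\sqrt{J}=\p$, and once $\h(J) = 3$ is established from the resolution, $J$ is $\p$-primary. For the multiplicity, localize at $\p$. The hypothesis $\h((x,y,z) + I_2(\mathbf{M})) = 5$ says that some $2\times 2$ minor of $\mathbf{M}$ is a unit in $S_{\p}/\p_{\p}$, so row operations on the two generators $ax+by+cz$, $dx+ey+fz$ (and a change of linear forms) put them in the form $\ell_1, \ell_2 \in \p_{\p}\setminus \p_{\p}^2$ that are linearly independent modulo $\p_{\p}^2$. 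Thus $J_{\p}$ contains $\p_{\p}^2$ and a two-dimensional piece of $\p_{\p}/\p_{\p}^2$, yielding the Hilbert function $(1,1,0,\dots)$ at $\p$ and $\lambda(S_{\p}/J_{\p})=2$. By Proposition~\ref{Ass}, $e(S/J)=2$.

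For $\pd(S/J)=4$, I would write down an explicit length-$4$ complex starting from the $8$ generators $x^2,y^2,z^2,xy,xz,yz,ax+by+cz,dx+ey+fz$, with subsequent ranks dictated by the Hilbert function, and apply the Buchsbaum--Eisenbud criterion: verify $\h(I_{r_j}(\partial_j))\ge j$ for $j\le 3$ and $\h(I_{r_j}(\partial_j))\ge j+1$ for $j=4$, where the $r_j$ are the expected ranks. As the paper notes for the other lemmas in the appendix, the tedious verification of these height conditions on the relevant minor ideals is handled most efficiently by a computer algebra system.

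For the description of $L$, I would proceed via linkage. By Theorem~\ref{linkage}, $L=(x^2,y^2,z^2):J$ is unmixed and $e(S/L) = e(S/(x^2,y^2,z^2))-e(S/J)=8-2=6$. Let $L'$ denote the ideal on the right-hand side of the displayed equation. A direct check (using the Plücker-type relation $(bf-ce)\,C_1 - (af-cd)\,C_2 + (ae-bd)\,C_3=0$ among the columns $C_i$ of $\mathbf{M}$, which guarantees the $xyz$-coefficient vanishes when the candidate quadric is multiplied by $ax+by+cz$ or $dx+ey+fz$) shows that $L'\subseteq L$; modulo a sign convention in the stated coefficients this is immediate. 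By Lemma~\ref{primary6}, $L'$ is $\p$-primary with $e(S/L')=6$ and $\pd(S/L')=4$. Since $L'\subseteq L$ are both unmixed of the same height and multiplicity, Lemma~\ref{unmixedequal} gives $L'=L$, and hence $\pd(S/L)=4$.

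The main obstacle is the explicit construction and exactness verification of the resolution of $S/J$: writing out the four differentials and checking all Buchsbaum--Eisenbud height inequalities is a genuine computation. The description of $L$, by contrast, is essentially free once Lemma~\ref{primary6} is in hand, modulo the small bookkeeping of identifying the correct Plücker relation.
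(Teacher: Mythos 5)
Your proposal is correct and follows essentially the same route as the paper: an explicit length-four free resolution of $S/J$ (on the $8$ generators, with ranks $1,8,14,9,2$) verified by the Buchsbaum--Eisenbud height criterion to get primariness, $\pd(S/J)=4$ and $e(S/J)=2$, followed by the linkage count $e(S/L)=8-2=6$ together with Lemma~\ref{primary6} and Lemma~\ref{unmixedequal} to identify $L$. The only part you leave to a computer is writing out the differentials and their minor-ideal heights, which the paper does display explicitly for this particular lemma; your observation that the stated quintic generator of $L$ needs a sign adjustment (it should be $(ae-bd)xy-(af-cd)xz+(bf-ce)yz$ for the Laplace relation to kill the $xyz$-coefficient) is a correct catch of a typo in the statement.
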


\begin{proof}
Consider the complex
\[\xymatrix{
S & \ar[l]_{\partial_1} S^8 & \ar[l]_{\partial_2} S^{14} & \ar[l]_{\partial_3} S^9 & \ar[l]_{\partial_4} S^2 & \ar[l] 0,}      \]
where\\
$\partial_1 = { \begin{pmatrix}a x+b y+c z&
      d x+e y+f z&
      x^{2}&
      x y&
      y^{2}&
      x z&
      y z&
      z^{2}\\
      \end{pmatrix}}$\\
      $\partial_2 = { \begin{pmatrix}{-x}&
      0&
      {-y}&
      0&
      0&
      0&
      {-z}&
      0&
      0&
      0&
      0&
      0&
      0&
      0\\
      0&
      {-x}&
      0&
      {-y}&
      0&
      0&
      0&
      {-z}&
      0&
      0&
      0&
      0&
      0&
      0\\
      a&
      d&
      0&
      0&
      {-y}&
      0&
      0&
      0&
      {-z}&
      0&
      0&
      0&
      0&
      0\\
      b&
      e&
      a&
      d&
      x&
      {-y}&
      0&
      0&
      0&
      {-z}&
      0&
      0&
      0&
      0\\
      0&
      0&
      b&
      e&
      0&
      x&
      0&
      0&
      0&
      0&
      0&
      {-z}&
      0&
      0\\
      c&
      f&
      0&
      0&
      0&
      0&
      a&
      d&
      x&
      y&
      {-y}&
      0&
      {-z}&
      0\\
      0&
      0&
      c&
      f&
      0&
      0&
      b&
      e&
      0&
      0&
      x&
      y&
      0&
      {-z}\\
      0&
      0&
      0&
      0&
      0&
      0&
      c&
      f&
      0&
      0&
      0&
      0&
      x&
      y\\
      \end{pmatrix}}$\\
      $\partial_3 = { \begin{pmatrix}y&
      0&
      z&
      0&
      0&
      0&
      0&
      0&
      0\\
      0&
      y&
      0&
      z&
      0&
      0&
      0&
      0&
      0\\
      {-x}&
      0&
      0&
      0&
      z&
      0&
      0&
      0&
      0\\
      0&
      {-x}&
      0&
      0&
      0&
      z&
      0&
      0&
      0\\
      a&
      d&
      0&
      0&
      0&
      0&
      z&
      0&
      0\\
      b&
      e&
      0&
      0&
      0&
      0&
      0&
      z&
      0\\
      0&
      0&
      {-x}&
      0&
      {-y}&
      0&
      0&
      0&
      0\\
      0&
      0&
      0&
      {-x}&
      0&
      {-y}&
      0&
      0&
      0\\
      0&
      0&
      a&
      d&
      0&
      0&
      {-y}&
      0&
      0\\
      0&
      0&
      b&
      e&
      a&
      d&
      x&
      {-y}&
      0\\
      c&
      f&
      b&
      e&
      0&
      0&
      0&
      {-y}&
      z\\
      0&
      0&
      0&
      0&
      b&
      e&
      0&
      x&
      0\\
      0&
      0&
      c&
      f&
      0&
      0&
      0&
      0&
      {-y}\\
      0&
      0&
      0&
      0&
      c&
      f&
      0&
      0&
      x\\
      \end{pmatrix}}$\\
      $\partial_4 = { \begin{pmatrix}{-z}&
      0\\
      0&
      {-z}\\
      y&
      0\\
      0&
      y\\
      {-x}&
      0\\
      0&
      {-x}\\
      a&
      d\\
      b&
      e\\
      c&
      f\\
      \end{pmatrix}}$\\
      It is easy to check that this is a complex.  We note that $x^2 \in I_1(\partial_1)$, $x^7,y^7 \in I_7(\partial_2)$, $x^7,y^7,z^7 \in I_7(\partial_3)$ and $x^2,y^2,z^2,ae-bd,af-cd,bf-ce \in I_2(\partial_4)$.  It follows that the complex is exact and resolves $J$.  Moreover, we have that $\pd(S/J) = 4$ and $J$ is unmixed.  As $\sqrt{J} = (x,y,z) = \p$ and $\lambda(S_\p/J_\p) = 2$, it follows that $J$ is $(x,y,z)$-primary and $e(S/J) = 2$.  
     The inclusion $L \subseteq (x^2,y^2,z^2):I$ is clear.  Moreover $L$ is $(x,y,z)$-primary by Lemma~\ref{primary6}. Since both ideals are height $3$ unmixed of multiplicity $6$, they must be equal by Lemma~\ref{unmixedequal}.
\end{proof}

\begin{lem}\label{primary22}
If 
\[J = (x,y,z)^2 + (bx-ay,cx-az,cy-bz),\]
where $\h(x,y,z,a,b,c) = 6$, then $J$ is $(x,y,z)$-primary, $\pd(S/J) = 5$ and $e(S/J) = 2$.  
\end{lem}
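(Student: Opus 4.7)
The plan follows the pattern used for Lemmas~\ref{primary6} and \ref{primary21}. Since $(x,y,z)^2 \subseteq J \subseteq (x,y,z)$, the ideal $J$ is immediately $(x,y,z)$-primary, hence unmixed of height three. For the multiplicity, work locally at $\p = (x,y,z)$: the three non-monomial generators $bx-ay$, $cx-az$, $cy-bz$ are (up to sign) the $2 \times 2$ minors of $\begin{pmatrix} a & b & c \\ x & y & z \end{pmatrix}$ and satisfy the syzygy $z(bx-ay) - y(cx-az) + x(cy-bz) = 0$. Since $a,b,c$ are units in $S_\p$, these three minors span a 2-dimensional subspace of $\p_\p/\p_\p^2$, so the local Hilbert function of $S_\p/J_\p$ is $(1,1,0,\ldots)$, giving $\lambda(S_\p/J_\p) = 2$ and hence $e(S/J) = 2$.

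For the projective dimension, the cleanest route is linkage via the regular sequence $Q = (x^2, y^2, z^2) \subseteq J$. By Theorem~\ref{linkage}, $L := Q : J$ is unmixed of height three with $e(S/L) = e(S/Q) - e(S/J) = 8 - 2 = 6$. Direct multiplication confirms that $xyz \in L$ and $cxy + bxz + ayz \in L$; for instance, $(cxy + bxz + ayz)(bx - ay) = bcx^2y + b^2x^2z - acxy^2 - a^2y^2z \in (x^2, y^2) \subseteq Q$, and the remaining checks against the generators of $J$ are of the same shape. Hence $L_0 := (x^2, y^2, z^2, xyz, cxy+bxz+ayz) \subseteq L$. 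By Lemma~\ref{primary6} applied with $(A,B,C) = (a,b,c)$ (so $\h(x,y,z,A,B,C) = 6 \ge 5$), $L_0$ is $(x,y,z)$-primary with $e(S/L_0) = 6$. Lemma~\ref{unmixedequal} then forces $L_0 = L$, and Lemma~\ref{primary6} provides $\pd(S/L) = 4$.

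With $\pd(S/L)$ in hand, the projective dimension of $S/J$ can be extracted by chasing the two short exact sequences $0 \to Q \to L \to L/Q \to 0$ and $0 \to L/Q \to S/Q \to S/J \to 0$ through the long exact Tor sequences, using that $\pd(S/Q) = 3$. Peskine-Szpiro further guarantees that $S/J$ is not Cohen-Macaulay (since $S/L$ isn't), giving the lower bound. The main obstacle is pinning down the precise value of $\pd(S/J)$ claimed in the lemma, rather than merely an upper bound. The most direct alternative, in keeping with the spirit of this appendix, is to bypass linkage and write an explicit minimal free resolution of $S/J$: the first syzygies consist of the Koszul relations among the monomials of $(x,y,z)^2$, the relations of the form $x(bx-ay) = bx^2 - axy$ (three analogues for each of the three minors, multiplied by $x$, $y$, and $z$), and the Eagon-Northcott-type syzygy $(z, -y, x)$ on the three minors. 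One then verifies exactness of the resulting complex by checking the Buchsbaum-Eisenbud rank and height conditions on the ideals of minors of each differential, just as in the proofs of Lemmas~\ref{primary6} and \ref{primary21}.
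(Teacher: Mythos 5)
The paper itself gives no written argument for this lemma: it is one of the ``remaining'' cases that the appendix delegates to the generic-resolution-plus-Buchsbaum--Eisenbud computation (verified in Macaulay2), in the same pattern carried out explicitly for Lemmas~\ref{primary6} and \ref{primary21}. Your proposal contains two genuine gaps. First, the opening claim that $(x,y,z)^2 \subseteq J \subseteq (x,y,z)$ makes $J$ ``immediately'' $(x,y,z)$-primary is false: this inclusion only gives $\sqrt{J}=(x,y,z)$, and an ideal with prime radical can have embedded primes. For instance $(x^2,xy,y^2,xz)$ in $k[x,y,z]$ sits between $(x,y)^2$ and $(x,y)$ but is not primary, since $xz$ lies in it while neither $x$ nor any power of $z$ does. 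Ruling out embedded primes is precisely the substantive content certified by the strengthened rank-and-height check $\h(I_{r_j}(\partial_j))\ge j+1$ for $j>3$ on an explicit resolution, and it is also what legitimizes your later use of Theorem~\ref{linkage} (which needs $J$ unmixed to get $(x^2,y^2,z^2):L=J$). So primariness cannot be waved through.

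Second, the value $\pd(S/J)=5$ is never actually obtained. Your multiplicity computation is correct, and the identification $L=(x^2,y^2,z^2):J=(x^2,y^2,z^2,xyz,cxy+bxz+ayz)$ via Lemma~\ref{primary6} and Lemma~\ref{unmixedequal} is sound (granting unmixedness of $J$), giving $\pd(S/L)=4$. But directly linked ideals need not have equal projective dimension, and chasing the two short exact sequences only yields $\pd(S/J)\le \max\{3,\ \pd(J/(x^2,y^2,z^2))+1\}$ with $J/(x^2,y^2,z^2)\cong \ext^3_S(S/L,S)$ up to twist; one would still have to compute that this Ext module has projective dimension $4$ from the explicit differentials in Lemma~\ref{primary6}, which you do not do. You candidly flag this obstacle and then defer to ``write an explicit minimal free resolution and check the Buchsbaum--Eisenbud conditions'' --- but that \emph{is} the proof, and it is left unexecuted. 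As it stands the proposal establishes $e(S/J)=2$ and a plausible strategy, but neither the primariness nor the equality $\pd(S/J)=5$.
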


\begin{lem}\label{primary31}
If 
\[J = (x,y,z)^2 + (ax+by+cz),\]
where $\h(x,y,z,a,b,c) \ge 5$, then $J$ is $(x,y,z)$-primary, $\pd(S/J) = 4$ and $e(S/J) = 3$.  
\end{lem}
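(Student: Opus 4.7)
The plan is to deduce all three statements from the short exact sequence
$$0 \to S/\m \xrightarrow{\cdot f} S/\m^2 \to S/J \to 0,$$
where $\m = (x,y,z)$ and $f = ax+by+cz$, combined with the known minimal free resolutions of $S/\m$ and $S/\m^2$. For the primary and multiplicity assertions, the sandwich $\m^2 \subseteq J \subseteq \m$ together with the primality of $\m$ forces $\sqrt{J} = \m$ and $\ass(S/J) = \{\m\}$. Localizing at $\m$, one has $\lambda(S_\m/\m^2 S_\m) = 4$, while $f \notin \m^2$ since the height hypothesis implies that not all of $a, b, c$ lie in $\m$; hence $\lambda(S_\m/J_\m) = 3$ and $e(S/J) = 3$.

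To justify the displayed short exact sequence, I need $\m^2 : f = \m$. One inclusion is immediate from $f \in \m$. For the reverse, write $S = R[x,y,z]$ with $R = S/\m$ a polynomial subring, and decompose $a = \alpha + a'$, $b = \beta + b'$, $c = \gamma + c'$ with $\alpha,\beta,\gamma \in R$ and $a',b',c' \in \m$, so $f \equiv \alpha x + \beta y + \gamma z \pmod{\m^2}$. For homogeneous $g \in S$ with $gf \in \m^2$, separate $g$ into its $x,y,z$-degree $0$ part $g_0 \in R$ and a remainder in $\m$; one checks that $g(\alpha x + \beta y + \gamma z) \in \m^2$ reduces to $g_0 \alpha = g_0 \beta = g_0 \gamma = 0$ in $R$. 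The hypothesis $\h(x,y,z,a,b,c) \geq 5$ forces $(\alpha,\beta,\gamma)$ to be nonzero in the domain $R$, so $g_0 = 0$, i.e., $g \in \m$.

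Finally, to pin down $\pd(S/J)$, I would form the mapping cone. The Koszul complex resolves $S/\m$ with Betti numbers $(1,3,3,1)$ (all differentials linear), and the minimal free resolution of $S/\m^2$ has Betti numbers $(1,6,8,3)$ (also linear). Lifting multiplication by $f$ to a chain map $\phi$ and taking the mapping cone produces a free resolution of $S/J$ of length $4$ with ranks $(1,7,11,6,1)$, yielding $\pd(S/J) \leq 4$. The main (mild) point is minimality of the cone: since $f$ has degree $2$ and both source resolutions are linear, the degree shifts force every entry of each $\phi_i$ to have positive degree, so all entries of the cone's differential lie in the irrelevant maximal ideal of $S$. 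Hence the cone is minimal and $\pd(S/J) = 4$.
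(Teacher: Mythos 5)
Your short exact sequence $0 \to S/\m(-2) \xrightarrow{\cdot f} S/\m^2 \to S/J \to 0$, the verification that $\m^2:f=\m$, the localization giving $e(S/J)=3$, and the minimal mapping cone giving $\pd(S/J)=4$ with Betti numbers $(1,7,11,6,1)$ are all correct, and this is a cleaner route to the resolution than the paper's (which writes the resolution down generically and checks exactness via Buchsbaum--Eisenbud). The gap is the primary assertion. The sandwich $\m^2\subseteq J\subseteq\m$ only gives $\sqrt{J}=\m$, i.e.\ that $\m$ is the unique \emph{minimal} prime of $J$; it does not exclude embedded primes, and ``prime radical'' is strictly weaker than ``primary.'' Concretely, take $f=wx$ with $\h(x,y,z,w)=4$ (so $a=w$, $b=c=0$): then $\m^2:f=\m$ still holds, your exact sequence and minimal mapping cone still exist, $\pd(S/J)=4$ and $\lambda(S_\m/J_\m)=3$ still hold, yet $J=\m^2+(wx)$ is not $\m$-primary, since $w\cdot x\in J$ while $w\notin\m=\sqrt{J}$ and $x\notin J$; indeed $(w,x,y,z)\subseteq J:x$ forces a height-four embedded prime. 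The structural symptom is that your argument only ever uses that some coefficient of $f$ is nonzero modulo $\m$, i.e.\ $\h(x,y,z,a,b,c)\geq 4$, whereas the conclusion genuinely fails at height $4$.

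To close the gap you must use the full hypothesis $\h(x,y,z,a,b,c)\geq 5$ to rule out height-four associated primes, and your mapping cone is well suited to this. Since $\pd(S/J)=4$, any embedded prime $\p$ would have height exactly $4$ and would satisfy $\ext^4_S(S/J,S)_\p\neq 0$. The long exact sequence of $\ext_S(-,S)$ applied to your short exact sequence identifies $\ext^4_S(S/J,S)$ with the cokernel of $\ext^3_S(S/\m^2,S)\to\ext^3_S(S/\m,S)(2)\cong (S/\m)(2)$, and a direct computation (reduce $f$ modulo $\m^2$ to $\alpha x+\beta y+\gamma z$ with $\alpha,\beta,\gamma$ the classes of $a,b,c$ in $S/\m$) shows this cokernel is $S/(x,y,z,a,b,c)$, up to twist. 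Equivalently, the last differential $\partial_4$ of the cone has entries $x,y,z$ from the Koszul block together with the entries of the comparison map $\phi_3$, which are congruent to $a,b,c$ modulo $(x,y,z)$, so $I_1(\partial_4)=(x,y,z,a,b,c)$. The hypothesis then gives $\h(I_1(\partial_4))\geq 5$, so no height-four prime supports $\ext^4_S(S/J,S)$ and $\ass(S/J)=\{\m\}$. This is exactly the unmixedness check $\h(I_{r_j}(\partial_j))\geq j+1$ for $j>\h(J)$ that the paper performs (compare the last column $(-z,y,-x,C,-B,A)^{T}$ in Lemma~\ref{primary6}); the missing step is this genuine computation, not a formality.
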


\begin{lem}\label{primary32}
If 
\[J = (x,y^2,yz,z^3,ay+z^2),\]
then $J$ is $(x,y,z)$-primary, $\pd(S/J) = 3$ and $e(S/J) = 3$.  
\end{lem}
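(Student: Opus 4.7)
The plan is to first eliminate a redundant generator, then recognize the ``non-$x$'' part of $J$ as a Hilbert--Burch ideal of height two. From this, the projective dimension and Cohen--Macaulayness are immediate, and the primary and multiplicity assertions follow cleanly.

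First I will note the identity $z(ay+z^2) - a(yz) = z^3$, so $z^3 \in (yz, ay+z^2)$ and therefore $J = (x, y^2, yz, ay+z^2)$. (Throughout I use the hypothesis $a \in S_1$ with $\h(x,y,z,a) = 4$, implicit from case~(2) of Proposition~\ref{1;3}.) Setting $I' = (y^2, yz, ay+z^2)$, the key step is to observe that $I' = I_2(M)$ for the $3 \times 2$ matrix
\[
M = \begin{pmatrix} y & z \\ 0 & y \\ -z & a \end{pmatrix},
\]
whose three $2 \times 2$ minors are $y^2$, $ay+z^2$, and $yz$. A quick radical calculation shows $\sqrt{I'} = (y,z)$: from $y^2 \in I'$ one gets $y \in \sqrt{I'}$, whence $z^2 = (ay+z^2) - ay \in \sqrt{I'}$. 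Hence $\h(I') = 2$, matching the generic height, and the Hilbert--Burch theorem gives the exact sequence
\[
0 \to S^2 \xrightarrow{M} S^3 \to S \to S/I' \to 0.
\]
In particular, $\pd(S/I') = 2$ and $S/I'$ is Cohen--Macaulay of codimension two with unique associated prime $(y,z)$.

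The remaining claims then fall out easily. Since $x \notin (y,z)$, the form $x$ is a nonzerodivisor on $S/I'$, giving $\pd(S/J) = \pd(S/I') + 1 = 3$ with $S/J$ Cohen--Macaulay of codimension three. Cohen--Macaulayness forces $J$ to be unmixed of height three, and since $\sqrt{J} = (x) + \sqrt{I'} = (x,y,z) = \p$, the ideal $J$ is $\p$-primary. For the multiplicity, I will apply the associativity formula to obtain $e(S/J) = \lambda(S_\p/J_\p)$, and exploit that $a$ is a unit in $S_\p$: the element $y + a^{-1}z^2 \in J_\p$ lets me eliminate $y$, and after this substitution $y^2$ and $yz$ both become multiples of $z^3$ modulo $(x, y+a^{-1}z^2)$. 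Thus $J_\p = (x, y+a^{-1}z^2, z^3)_\p$ is a complete intersection whose residue ring has length $3$, with filtration pieces represented by $1, z, z^2$. The one step that requires genuine verification is the Hilbert--Burch identification of $M$; once that is in hand, the rest is routine bookkeeping.
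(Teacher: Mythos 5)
Your argument is correct, and it takes a genuinely different route from the paper's. The paper proves all the appendix lemmas, including this one, by the single uniform recipe announced at the start of Appendix~\ref{append}: write down an explicit (generic) free resolution of $S/J$ and verify exactness and unmixedness via the Buchsbaum--Eisenbud rank-and-height criteria, a check that is carried out only for Lemmas~\ref{primary6} and \ref{primary21} and otherwise delegated to Macaulay2. You instead observe that $z^3=z(ay+z^2)-a(yz)$ is redundant, identify $(y^2,yz,ay+z^2)$ as the maximal minors of a $3\times 2$ matrix of height $2$, and invoke Hilbert--Burch plus the nonzerodivisor $x$; this yields a short, computer-free, structurally transparent proof (it exhibits $J$ as a hyperplane section of a codimension-two perfect ideal), at the cost of being ad hoc rather than fitting the paper's one-size-fits-all template. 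Your verifications are sound: the three minors of $M$ are exactly $y^2$, $yz$, $ay+z^2$; the radical computation gives $\h(I')=2$ so Hilbert--Burch applies; and the local length computation at $\p$ is right. The only caveat worth flagging is the hypothesis $\h(x,y,z,a)=4$ you import from case~(2) of Proposition~\ref{1;3}: the lemma is stated without it and is actually invoked in the proof of that proposition \emph{before} the dichotomy between $\h(x,y,z,a)=4$ and $a\in(x,y,z)$ is drawn. Note, though, that your Hilbert--Burch argument for $\pd(S/J)=3$ and for $(x,y,z)$-primariness never uses $a\notin\p$; only the final length count does, and in the degenerate case $a\in(x,y,z)$ one has $ay+z^2\equiv z^2 \pmod{(x,y^2,yz)}$, so $J=(x)+(y,z)^2$ and $\lambda(S_\p/J_\p)=3$ is immediate. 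Adding that one sentence makes your proof cover the lemma in the full generality in which the paper uses it.
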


\begin{lem}\label{primary33}
If 
\[J = (x^2,xy,xz,y^2,yz,ax+by+z^2,cx+dy),\]
where $\h(x,y,z,c,d) = 5$, then $J$ is $(x,y,z)$-primary, $\pd(S/J) = 4$ and $e(S/J) = 3$.  Moreover, 
\[L = (x^2,y^2,ax+by+z^2):J = (x^2,xy,y^2,ax+by+z^2,cxz - dyz)\]
and $\pd(S/L) = 4$.
\end{lem}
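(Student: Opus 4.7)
The plan is to follow the template of Lemmas~\ref{primary6} and \ref{primary21}: write down an explicit candidate minimal free resolution of $S/J$ and verify it via the Buchsbaum--Eisenbud acyclicity criterion. I would construct a complex
\[
0 \longrightarrow F_4 \longrightarrow F_3 \longrightarrow F_2 \longrightarrow F_1 \longrightarrow S \longrightarrow 0
\]
with $F_1 = S^7$ reflecting the seven stated generators of $J$, and then check that $\h(I_{r_j}(\partial_j)) \ge j$ for $j = 1, 2, 3$, together with the unmixedness condition $\h(I_{r_4}(\partial_4)) \ge 5$. The hypothesis $\h(x,y,z,c,d) = 5$ is used precisely at the last of these height estimates: without it, the fourth ideal of minors would fall below height $5$ and the resulting ideal could acquire an embedded component. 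Since the entries of every differential lie in $\p = (x,y,z)$, minimality is automatic and the resolution yields $\pd(S/J) = 4$.

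For the primary decomposition and multiplicity, the containments $(x,y,z)^3 \subseteq J \subseteq (x,y,z)$ give $\sqrt{J} = \p$, and combined with unmixedness this makes $J$ a $\p$-primary ideal. To compute $e(S/J) = 3$ I would pass to the local ring $S_\p$ and exhibit a $k(\p)$-basis of $S_\p/J_\p$: after localizing, $c$ and $d$ become units, so $cx+dy$ lets us eliminate one of $x,y$; reducing $ax+by+z^2$ and using $(x,y,z)^3 \subseteq J$ then collapses the quotient to three linearly independent residue classes.

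For the link, first verify that $(x^2, y^2, ax+by+z^2)$ is a regular sequence contained in $J$: containment is immediate, and regularity follows because modulo $(x^2,y^2)$ the image of $ax+by+z^2$ reduces modulo $(x,y)$ to $z^2$, a nonzerodivisor on $S/(x^2,y^2)$. Setting $L = (x^2,y^2,ax+by+z^2):J$, one checks directly that $xy \in L$ (since $xy \cdot J \subseteq (x^2,y^2)$) and that $cxz - dyz \in L$ (by multiplying each generator of $J$ and reducing modulo the complete intersection, which requires only one nontrivial step using $ax+by+z^2$). This gives the containment $(x^2, xy, y^2, ax+by+z^2, cxz-dyz) \subseteq L$. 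By Theorem~\ref{linkage}(3), $e(S/L) = 2^3 - e(S/J) = 5$, and a second Buchsbaum--Eisenbud verification shows that the right-hand side is itself an unmixed height $3$ ideal of multiplicity $5$ with $\pd = 4$. Lemma~\ref{unmixedequal} then forces equality.

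The main obstacle is purely computational: producing the two candidate resolutions and checking the height inequalities on their matrices of minors. These verifications are routine but tedious by hand, with moderately large matrices, and are best handled with the supplementary Macaulay2 script referenced in the appendix. The one place where careful bookkeeping is required is in certifying that the fourth minor-ideal height drops exactly when the hypothesis $\h(x,y,z,c,d) = 5$ fails; this is what ties the precise form of $L$ to the structural assumption on $c,d$.
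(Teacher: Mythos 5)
Your proposal matches the paper's own treatment: the appendix proves all of these lemmas by exhibiting an explicit (generic) resolution, applying the Buchsbaum--Eisenbud criterion together with the extra height condition $\h(I_{r_j}(\partial_j)) \ge j+1$ for $j \ge 4$ to get unmixedness, computing the multiplicity locally at $\p$, and identifying the colon ideal via Theorem~\ref{linkage}(3) and Lemma~\ref{unmixedequal} --- exactly the steps you outline (the paper carries this out explicitly only for Lemmas~\ref{primary6} and \ref{primary21} and defers the remaining verifications, including this one, to the supplementary Macaulay2 file). One small correction: minimality of the resolution follows from the differentials having entries in the homogeneous maximal ideal (they involve $a,b,c,d$ as well), not from their lying in $\p=(x,y,z)$.
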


\begin{lem}\label{primary34}
\[J = (x,y^3,y^2z,yz^2,z^3,ay + bz),\]
where $\h(x,y,z,a,b) = 5$, then $J$ is $(x,y,z)$-primary, $\pd(S/J) = 4$ and $e(S/J) = 4$.
\end{lem}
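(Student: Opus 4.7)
The plan is to follow the template of the other lemmas in this appendix: write down a minimal free resolution of $S/J$ and verify its exactness and unmixedness via the Buchsbaum--Eisenbud criterion, then compute the multiplicity by localizing at $\p = (x,y,z)$.

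To construct the resolution, I would exploit that $x \in J$ is a non-zerodivisor on $S$, so $S/J \cong T/K$ where $T = S/(x)$ and $K = (y,z)^3 + (ay+bz)$; the minimal $S$-resolution is then the mapping cone $\operatorname{Cone}(\tilde{G}_\*(-1) \xrightarrow{\cdot x} \tilde{G}_\*)$, where $\tilde{G}_\*$ is a lift to $S$ of a minimal $T$-resolution $G_\*$ of $T/K$. Over $T$, the first syzygies of $K$ are the three Koszul syzygies $(z,-y,0,0,0), (0,z,-y,0,0), (0,0,z,-y,0)$ among $y^3,y^2z,yz^2,z^3$, together with the three syzygies arising from $y^2(ay+bz) = ay^3 + by^2z$, $yz(ay+bz) = ay^2z + byz^2$, $z^2(ay+bz) = ayz^2 + bz^3$; these account for all first syzygies by the annihilator computation $\operatorname{ann}_{T/(y,z)^3}(ay+bz) = (y,z)^2$. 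The second syzygies are exhausted by the two relations $a\,k_i + b\,k_{i+1} + z\,s_i - y\,s_{i+1} = 0$ for $i = 1, 2$, where $k_i$ and $s_i$ denote the Koszul and new first syzygies, respectively. This yields a $T$-resolution of shape
\[ 0 \to T(-5)^2 \to T(-4)^6 \to T(-2) \oplus T(-3)^4 \to T, \]
hence an $S$-resolution of length $4$.

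With the resolution in hand, I would verify the Buchsbaum--Eisenbud rank and height conditions on each differential: $\h(I_{r_i}(\partial_i)) \geq i$ for $i = 1, 2, 3$, and $\geq i+1$ for $i = 4$ to ensure unmixedness. The hypothesis $\h(x,y,z,a,b) = 5$ is precisely what makes the maximal minor ideal of $\partial_4$ have height $5$. Primariness then follows: $\sqrt{J} = \p$ is immediate from $(x) + (y,z)^3 \subseteq J \subseteq \p$, and combined with unmixedness this gives that $J$ is $\p$-primary. To compute the multiplicity, I would localize at $\p$: in $S_\p$ the forms $a, b$ are units, so $x, ay+bz$ form a regular sequence and $S_\p/(x, ay+bz)$ is a DVR in which $y$ is a unit multiple of $z$; consequently the image of $(y,z)^3$ is a power of the uniformizer, $\lambda(S_\p/J_\p)$ is computed directly, and $e(S/J) = e(S/\p)\cdot \lambda(S_\p/J_\p)$ follows from the associativity formula (Proposition~\ref{Ass}).

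The main obstacle is the bookkeeping: writing down the explicit $T$-differentials, lifting them to $S$, and checking all Buchsbaum--Eisenbud height conditions---especially the heights of maximal minors of the lifted third and fourth differentials, whose behavior depends delicately on how $a, b$ interact with $x, y, z$. As elsewhere in the appendix, these computations are routine but tedious, and are best verified against the supplementary Macaulay2 file referenced in the preamble.
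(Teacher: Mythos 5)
Your overall strategy --- resolve $S/J$ by a mapping cone over $T=S/(x)$, verify exactness and unmixedness with the Buchsbaum--Eisenbud criterion, and get the multiplicity by localizing at $\p$ --- is exactly the template the paper prescribes for the appendix lemmas (the paper works out only Lemmas~\ref{primary6} and \ref{primary21} explicitly and delegates the rest to Macaulay2), and your syzygy analysis over $T$ is essentially correct: the computation $(y,z)^3:(ay+bz)=(y,z)^2$ does yield exactly the six first syzygies you list, the two second syzygies (modulo a sign convention in the $s_i$; the relation should read $ak_i+bk_{i+1}-zs_i+ys_{i+1}=0$ with your normalization of $s_i$) give the shape $0\to T^2\to T^6\to T^5\to T$, and the $2\times 2$ minors of the last map contain $(y,z)^2+(a,b)^2$, which together with the $x^2$ contributed by the cone yields the height-$5$ condition needed for unmixedness. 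So your outline does establish that $J$ is $\p$-primary and $\pd(S/J)=4$.

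The gap is in the multiplicity, where you stop one step short of a number and therefore miss that your own computation contradicts the statement you set out to prove. Since $\h(x,y,z,a,b)=5$ forces $a,b\notin\p$, they are units in $S_\p$; the ring $S_\p/(x,ay+bz)$ is a DVR in which $y$ and $z$ are both uniformizers, so the image of $(y,z)^3$ is the cube of the uniformizer and $\lambda(S_\p/J_\p)=3$. Hence $e(S/J)=3$, not $4$. The value $3$ is moreover what the paper actually needs: in Case II of Proposition~\ref{1;3} this lemma is invoked to conclude that a $\p$-primary ideal of multiplicity $3$ equals $(x,ay+bz,y^3,y^2z,yz^2,z^3)$ via Lemma~\ref{unmixedequal}, which is only possible if $e(S/J)=3$. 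So the ``$e(S/J)=4$'' in the statement is an error in the paper, and a complete write-up must carry the length computation to its conclusion and record $e(S/J)=3$ rather than purport to verify the stated value.
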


\begin{lem}\label{primary35}
If
\[J = (x^2,xy,xz,yz^2,z^3,ax+by+cz,dx+y^2),\]
where $\h(x,y,z,b,c) = \h(x,y,z,c,d) = 5$, then $J$ is $(x,y,z)$-primary, $\pd(S/J) = 4$ and $e(S/J) = 3$.
\end{lem}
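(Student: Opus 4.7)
My plan follows the uniform strategy of this appendix: produce an explicit candidate for the minimal free resolution $F_\bullet$ of $S/J$ and apply the Buchsbaum--Eisenbud exactness criterion. Since the claim asserts $\pd(S/J) = 4$, I expect a resolution of the form $0 \to F_4 \to F_3 \to F_2 \to S^7 \to S \to S/J \to 0$, with $F_1 = S^7$ corresponding to the seven listed generators. Exactness and unmixedness then reduce to verifying $\h(I_{r_j}(\partial_j)) \ge j$ for $j = 1,2,3$ and $\h(I_{r_4}(\partial_4)) \ge 5$, where $r_j$ is the expected rank of $\partial_j$. The matrices and Betti numbers can be read off a Macaulay2 computation treating $a,b,c,d$ as generic linear forms.

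The remaining two assertions can be disposed of directly. To see that $J$ is $\p$-primary for $\p = (x,y,z)$, note every generator lies in $\p$, while conversely $x^2 \in J$, $z^3 \in J$, and $y^3 = y(dx+y^2) - d(xy) \in J$ (using $xy \in J$), so $\sqrt{J} = \p$; combined with unmixedness from the resolution this gives $\p$-primariness. For the multiplicity, localize at $\p$: the height hypotheses force $b,c,d \notin \p$, hence units in $S_\p$. The generator $dx+y^2$ gives $x \equiv -d^{-1}y^2 \pmod{J_\p}$, after which $ax+by+cz$ eliminates $z$ as a polynomial in $y$ of degree at most two. Substituting into $xy \in J$ immediately forces $y^3 \equiv 0 \pmod{J_\p}$, so $S_\p/J_\p$ has basis $\{1, y, y^2\}$ over the residue field and $e(S/J) = 3$.

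The main obstacle is producing the explicit resolution and verifying the minors-height inequalities. Compared with Lemmas~\ref{primary6} and \ref{primary21}, this $J$ has more generators, and the ``near-linear'' generator $ax+by+cz$ with generic coefficients contributes nontrivially to the higher syzygies, making the matrices correspondingly larger and any by-hand verification tedious. In practice I would use the authors' provided \verb+fourQuadrics.m2+ script to extract the differentials and compute the ranks $r_j$, and then verify each inequality $\h(I_{r_j}(\partial_j)) \ge \ldots$ using only the hypotheses $\h(x,y,z,b,c) = \h(x,y,z,c,d) = 5$; the conceptual content is the observation that precisely these two height conditions are what is needed to make the Buchsbaum--Eisenbud inequalities hold at every step.
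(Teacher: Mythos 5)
Your proposal is correct and follows exactly the paper's approach: the appendix explicitly states that for lemmas beyond \ref{primary6} and \ref{primary21} one resolves the ideal generically, applies the Buchsbaum--Eisenbud criterion with the extra condition $\h(I_{r_j}(\partial_j)) \ge j+1$ for $j \ge 4$ to get unmixedness, and verifies the height conditions with Macaulay2. Your supplementary computations of $\sqrt{J}=(x,y,z)$ and of $\lambda(S_\p/J_\p)=3$ via localization (reducing $J_\p$ to the complete intersection $(x+d^{-1}y^2,\, z - w(y),\, y^3)$) are the standard way the remaining claims are checked and are consistent with what the paper does in its two worked examples.
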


\begin{lem}\label{primary36}
If
\[J = (x^2,xy,xz,y^3,z^3,ax+by+cz,dx+yz),\]
where $\h(x,y,z,b,c) = \h(x,y,z,b,d) = \h(x,y,z,c,d) = 5$, then $J$ is $(x,y,z)$-primary, $\pd(S/J) = 4$ and $e(S/J) = 3$.
\end{lem}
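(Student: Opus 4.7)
The plan is to follow the generic-construction-and-verification template set up in Lemma~\ref{primary6} and Lemma~\ref{primary21}: I write down a candidate minimal free resolution $F_\bullet$ of $S/J$ and verify exactness via the Buchsbaum-Eisenbud acyclicity criterion. Once $F_\bullet$ is in hand, the projective dimension, unmixedness (hence $\p$-primariness), and multiplicity all read off directly.

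First I would check that $\sqrt{J} = \p$. The containment $J \subseteq \p$ is immediate from the generators. For $\p^{3} \subseteq J$, every degree-$3$ monomial in $x,y,z$ except $y^{2}z$ and $yz^{2}$ is a manifest multiple of a listed generator, and
\[
y^{2}z = y(dx+yz) - d(xy) \in J, \qquad yz^{2} = z(dx+yz) - d(xz) \in J.
\]
Thus $\sqrt{J} = \p$, and so $J$ is $\p$-primary as soon as we know it is unmixed.

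Next I would construct $F_\bullet$. Its shape should closely mirror the resolution used in the proof of the very similar Lemma~\ref{primary35}, with the quadric $dx+yz$ replacing $dx+y^{2}$ and the cubic $y^{3}$ replacing $yz^{2}$. The expected Betti numbers are those of a length-$4$ resolution of an ideal of multiplicity $3$ with $\pd = 4$. To verify acyclicity I would exhibit pure powers $x^{N}, y^{N}, z^{N}$ inside each ideal of maximal minors $I_{r_{j}}(\partial_{j})$, so that $\h(I_{r_{j}}(\partial_{j})) \ge j$ for $j \le 3$ and $\h(I_{r_{4}}(\partial_{4})) \ge 4$. This is precisely where the three height hypotheses enter: a $k$-linear dependence modulo $\p$ between any pair of $b, c, d$ would drop the height of a critical minor and destroy either acyclicity or unmixedness. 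The explicit minor computations are routine and can be verified with the Macaulay2 file referenced at the start of the appendix.

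Once $F_\bullet$ is established, $\pd(S/J) = 4$ and unmixedness are immediate, and combined with the first paragraph $J$ is $\p$-primary. The multiplicity $e(S/J) = 3$ is then read off the Hilbert series of $F_\bullet$, or equivalently computed from $\lambda(S_\p/J_\p)$ by passing to the associated graded: under the three hypotheses, $\bar b, \bar c, \bar d$ are units in the residue field, so the leading-form ideal of $J_\p$ in $\mathrm{gr}_\p(S_\p)$ contains $\bar d\bar x$, and reduces modulo $\bar x$ to $(\bar b \bar y + \bar c \bar z,\, \bar y^{3},\, \bar z^{3})$, cutting out an Artinian quotient of length three. The main obstacle is bookkeeping --- writing down the differentials of $F_\bullet$ explicitly and tracking which hypothesis guarantees each minor computation --- rather than any conceptual difficulty.
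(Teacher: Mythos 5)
Your proposal is correct and follows essentially the same route as the paper: the appendix proves these primary-ideal lemmas by exhibiting the explicit free resolution and checking the Buchsbaum--Eisenbud height conditions (done in detail only for Lemmas~\ref{primary6} and \ref{primary21}, with the rest deferred to Macaulay2). Your added explicit checks that $\p^3 \subseteq J$ and that $\lambda(S_\p/J_\p)=3$ are consistent with, and slightly more detailed than, what the paper records.
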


\begin{lem}\label{primary37}
If
\[J = (x^2,xy,xz,z^3,c x+y^{2},b x-y z,a x+b y+c z),\]
where $\h(x,y,z,b,c) = 5$, then $J$ is $(x,y,z)$-primary, $\pd(S/J) = 4$ and $e(S/J) = 3$.
\end{lem}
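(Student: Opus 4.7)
The plan is to follow the standard approach outlined in the preamble to Appendix~\ref{append}: exhibit a minimal free resolution
\[ 0 \to F_4 \to F_3 \to F_2 \to F_1 \to S \to S/J \to 0 \]
of length $4$, and verify its exactness and the unmixedness of $J$ via the Buchsbaum--Eisenbud criterion. Since $J$ has seven minimal generators (the six quadrics $x^2, xy, xz, cx+y^2, bx-yz, ax+by+cz$ and the cubic $z^3$), I take $F_1 = S^7$. The higher terms and differentials can be constructed either by lifting syzygies by hand or with a computer algebra system such as Macaulay2 (cf.\ the supplementary file referenced in the appendix preamble). Exactness reduces, by Buchsbaum--Eisenbud, to verifying $\h(I_{r_j}(\partial_j)) \ge j$ for $j = 1, 2, 3, 4$, where $r_j$ is the expected rank of $\partial_j$; strengthening to $\h(I_{r_4}(\partial_4)) \ge 4$ together with $F_4 \ne 0$ then gives unmixedness of height $3$ and $\pd(S/J) = 4$. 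These height verifications are routine using only the hypothesis $\h(x,y,z,b,c)=5$.

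Next I identify the unique minimal prime. Clearly $J \subseteq \p$ by inspection of the generators. Conversely $x^2, z^3 \in J$ directly, and
\[ y^3 = y(cx+y^2) - c \cdot xy \in J, \]
so $\sqrt{J} \supseteq (x,y,z) = \p$. Combined with the unmixedness established above, this shows $J$ is $\p$-primary.

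For the multiplicity, the associativity formula (Proposition~\ref{Ass}) gives $e(S/J) = \lambda(S_\p/J_\p)$ since $e(S/\p)=1$. Localize at $\p$ and set $A = S_\p/J_\p$. As $\h(x,y,z,b,c) = 5$, the forms $b$ and $c$ are units in $S_\p$. The relation $cx+y^2=0$ in $A$ gives $x = -c^{-1}y^2$; substituting into $xy \in J$ forces $y^3 = 0$ in $A$, and substituting into $ax+by+cz$ expresses $z$ as a $k(\p)$-linear combination of $y$ and $y^2$. One checks that the remaining generators $bx-yz$, $xz$, $x^2$, $z^3$ reduce to consistent identities under these substitutions. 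Hence $A$ is spanned over $k(\p)$ by $\{1, y, y^2\}$, so $\lambda(A) = 3$ and $e(S/J) = 3$.

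The main obstacle is constructing and verifying the explicit minimal free resolution, in particular the minor-height bounds on the fourth differential required for unmixedness. This step is finite and mechanical but involves substantial bookkeeping; as the authors themselves indicate, it is most efficiently carried out in a computer algebra system. Once the resolution is in hand, the radical identification and the local length computation are short.
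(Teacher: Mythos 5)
Your proposal is correct and follows essentially the same route as the paper: the appendix preamble prescribes exactly this Buchsbaum--Eisenbud verification on an explicit resolution (carried out for this lemma via Macaulay2 rather than in print), and your hand computations of $\sqrt{J}=(x,y,z)$ and of $\lambda(S_\p/J_\p)$ are sound. The only point worth tightening is that your localization argument as written gives $\lambda(S_\p/J_\p)\le 3$; the equality (i.e.\ that $y^2\notin J_\p$) should be read off from the graded resolution or checked directly, just as the paper does implicitly.
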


\begin{lem}\label{primary38}
If
\[J = (x,y,z)^3 +(ax+by+cz,dx+ey+fz),\]
where $\h\left((x,y,z)+I_2\begin{pmatrix}a&b&c\\d&e&f\end{pmatrix}\right) = 5$ and $\h(x,y,z,a,b,c,d,e,f) \ge 6$, then $J$ is $(x,y,z)$-primary, $\pd(S/J) = 5$ and $e(S/J) = 3$.  
\end{lem}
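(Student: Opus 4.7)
My approach follows the template used for the earlier lemmas in this appendix (e.g.\ Lemmas~\ref{primary6} and \ref{primary21}): I would exhibit an explicit candidate resolution of $S/J$ of length $5$ and apply the Buchsbaum--Eisenbud acyclicity and unmixedness criteria to it. The first step is to identify the minimal generators. Since $(x,y,z)^3 \subseteq J \subseteq (x,y,z)$ and the hypothesis $\h(x,y,z,a,b,c,d,e,f) \ge 6$ prevents any cubic monomial in $x,y,z$ from lying in $(a,b,c,d,e,f)$, the $10$ degree-three monomials in $x,y,z$ together with the two quadrics $ax+by+cz$ and $dx+ey+fz$ form a minimal generating set of $J$ of size $12$.

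Next I would build the universal resolution over the $9$-variable generic ring $R = k[x,y,z,a,b,c,d,e,f]$. The syzygies come from three natural sources: the Koszul-type relations on the cubic monomials $x^3,\ldots,z^3$; the \emph{evaluation} relations $x(ax+by+cz)=ax^2+bxy+cxz$, etc., which express multiples of the quadrics as linear combinations of cubic monomials with coefficients in $(a,b,c,d,e,f)$; and the Koszul syzygy between the two quadrics $q_1,q_2$, which after reducing modulo the evaluation relations produces a single additional dependence responsible for the expected rank drop. The explicit matrices $\partial_1,\ldots,\partial_5$ and their ranks are produced by a Macaulay2 computation, as with the other lemmas of this appendix.

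To verify acyclicity I would apply Buchsbaum--Eisenbud: $\h(I_{r_j}(\partial_j)) \ge j$ for $j=1,2,3$, together with the strengthened bound $\h(I_{r_j}(\partial_j)) \ge j+1$ for $j=4,5$, which yields unmixedness (cf.\ \cite[Proposition 2.4]{HMMS2}). In each case the minors include pure powers of $x,y,z$ arising from the ``Koszul on cubics'' block, so almost all of the estimates are immediate; the only delicate estimates are those involving minors whose entries mix $a,\ldots,f$ with $x,y,z$, and these are precisely what the hypotheses $\h((x,y,z)+I_2(\mathbf{M}))=5$ and $\h(x,y,z,a,b,c,d,e,f)\ge 6$ are tailored to control. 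Acyclicity then gives $\pd(S/J)=5$, and unmixedness together with $\sqrt{J}=(x,y,z)$ upgrades to the conclusion that $J$ is $(x,y,z)$-primary.

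Finally, for the multiplicity $e(S/J)=\lambda(S_\p/J_\p)$ with $\p=(x,y,z)$, I would filter $S_\p/J_\p$ by powers of $\p_\p$ and sum the lengths of the successive quotients. Since $\p_\p^3\subseteq J_\p$, only the pieces in degrees $0,1,2$ contribute. Degree $0$ gives $\kappa(\p)$, contributing $1$. In degree $1$, the images of $ax+by+cz$ and $dx+ey+fz$ span a $2$-dimensional subspace of $\p_\p/\p_\p^2 \cong \kappa(\p)^3$ because $\mathbf{M}$ has rank $2$ over $\kappa(\p)$ (a direct consequence of $\h((x,y,z)+I_2(\mathbf{M}))=5$), giving contribution $1$. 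In degree $2$, the six products $xq_1,yq_1,zq_1,xq_2,yq_2,zq_2$ span a $5$-dimensional subspace of $\p_\p^2/\p_\p^3 \cong \kappa(\p)^6$, the unique kernel element being the Koszul-type syzygy $q_2 \cdot q_1 - q_1 \cdot q_2 = 0$ (this rank is exactly $5$ because $\kappa(\p)[x,y,z]/(q_1,q_2)$ is a polynomial ring in one variable, whose degree-$2$ component is $1$-dimensional), again contributing $1$. Summing gives $e(S/J)=1+1+1=3$. The main obstacle to executing the plan is producing and verifying the explicit universal resolution, which is combinatorially heavy but conceptually routine; the supplementary Macaulay2 file referenced in the introduction to this appendix is the natural place to record these computations.
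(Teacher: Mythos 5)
Your proposal is correct and follows essentially the same route as the paper, which for this lemma (like most in the appendix) defers to the general recipe stated at the start of Appendix~\ref{append}: write down the generic resolution, verify $\h(I_{r_j}(\partial_j))\ge j$ for $j\le 3$ and $\ge j+1$ for $j=4,5$ via Buchsbaum--Eisenbud to get exactness and unmixedness, and check the details in the supplementary Macaulay2 file. Your explicit filtration argument computing $\lambda(S_\p/J_\p)=3$ is a correct and welcome elaboration of a step the paper leaves implicit.
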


\begin{lem}\label{primary310}
If
\[J = (x,y,z)^3 + (ax+by + q,ay+bz + q',xz-y^2),\]
where $\h(x,y,z,a,b) = 5$ and $q,q' \in (x,y,z)^2$, then $J$ is $(x,y,z)$-primary, $\pd(S/J) = 4$ and $e(S/J) = 3$.
\end{lem}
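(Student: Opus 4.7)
My plan follows the uniform template of Appendix~\ref{append}: exhibit an explicit candidate minimal free resolution of $S/J$ and verify its exactness via the Buchsbaum-Eisenbud criteria, from which $\pd(S/J)=4$ will follow. The $\p$-primariness and multiplicity can be established more directly, which streamlines the verification.

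Since $(x,y,z)^3 \subseteq J \subseteq \p$, we have $\sqrt{J} = \p$, and every associated prime of $J$ must contain (hence equal) $\p$; thus $J$ is $\p$-primary with $\h(J) = 3$ automatically, and $e(S/J) = \lambda(S_\p/J_\p)$. To compute this length I would pass to the $\p$-adic associated graded ring $A := \mathrm{gr}_\p(S_\p) \iso k(\p)[x,y,z]$. Since $a,b \notin \p$, their images $\bar a, \bar b$ in $k(\p)$ are units, and since $q,q' \in \p^2$ they contribute no terms of $\p$-order less than $2$; hence the initial forms of the three quadric generators of $J$ are $\bar a x + \bar b y$ and $\bar a y + \bar b z$ (order $1$) and $xz - y^2$ (order $2$), together with all of $(x,y,z)^3$ in order $3$. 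Setting $u = \bar b/\bar a$, inside $A$ the two linear relations force $y \equiv -uz$ and $x \equiv u^2 z$, making $xz - y^2 \equiv u^2 z^2 - u^2 z^2 = 0$ automatic and collapsing $(x,y,z)^3$ to $(z^3)$; thus $A/I_0 \iso k(\p)[z]/(z^3)$ has length $3$, where $I_0$ denotes the candidate initial ideal. A direct check, facilitated by the syzygy $y f_1 - x f_2 + b f_3 \in \p^3$ among the generators $f_1 = ax+by+q$, $f_2 = ay+bz+q'$, $f_3 = xz-y^2$ (which shows no higher-order cancellations occur), yields $\mathrm{in}_\p(J_\p) = I_0$, so $e(S/J) = 3$.

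For $\pd(S/J) = 4$, following the strategy of Lemmas~\ref{primary6} and \ref{primary21}, I would exhibit explicit matrices $\partial_1, \ldots, \partial_4$ for a minimal free resolution of $S/J$, computed symbolically over a generic polynomial ring in the indeterminates $a, b, x, y, z$ together with scalar parameters for the coefficients of $q$ and $q'$. Exactness is verified via $\h(I_{r_j}(\partial_j)) \ge j$ for $j = 1, 2, 3$ and the stronger $\h(I_{r_4}(\partial_4)) \ge 5$ (which here also re-confirms unmixedness). As in the sibling lemmas, the minor-height conditions should be witnessed by suitable monomials in $x, y, z$ combined with polynomial expressions in $a, b$ and in the generic coefficients of $q, q'$, and the explicit matrices together with the minor verifications are most cleanly performed with the authors' accompanying Macaulay2 script. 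The main obstacle is the careful accounting for the quadric perturbations $q, q'$: they must be carried as generic parameters throughout the computation so that the Buchsbaum-Eisenbud inequalities remain valid under arbitrary specialization, rather than only at the normal-form case $q = q' = 0$ where the computation is most transparent; alternatively, one can verify the inequalities in that case and then invoke upper-semicontinuity of Betti numbers to transfer the conclusion to the general $q, q' \in \p^2$.
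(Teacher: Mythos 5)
Your main route --- resolve $S/J$ generically and verify the Buchsbaum--Eisenbud rank and height conditions, with the strengthened bound $\h(I_{r_4}(\partial_4))\ge 5$ beyond the height of $J$ to force unmixedness --- is exactly the approach the paper takes for all the appendix lemmas (it writes out the matrices only for Lemmas~\ref{primary6} and \ref{primary21} and defers the rest, including this one, to the accompanying Macaulay2 script). Your computation of $e(S/J)=3$ via the associated graded ring of $S_\p$ is a genuinely different and more conceptual argument than reading the multiplicity off the resolution, and it is correct provided you complete the standard-basis verification that $\mathrm{in}_\p(J_\p)$ equals your candidate ideal $I_0$: besides the syzygy $yf_1-xf_2+bf_3=yq-xq'\in\p^3$ that you exhibit, one must also lift the relation expressing $xz-y^2$ in terms of the two initial linear forms, namely $zf_1-yf_2-af_3=zq-yq'\in\p^3$, and the Koszul syzygy of those linear forms; all of these land in $\p^3\subseteq J$, so no new initial forms appear and the length is indeed $3$.

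Two statements in your write-up are wrong and need repair. First, $\sqrt{J}=\p$ does \emph{not} make $J$ $\p$-primary ``automatically'': every associated prime contains $\p$, but an embedded prime may strictly contain it (e.g.\ $(x^2,xy)$ satisfies $(x)^2\subseteq(x^2,xy)\subseteq(x)$ yet has $(x,y)$ as an embedded prime). Primariness here is precisely the unmixedness you later extract from $\h(I_{r_4}(\partial_4))\ge 5$, so your conclusion survives, but only through that check and not through the radical argument. Second, the fallback of verifying the resolution at $q=q'=0$ and invoking upper semicontinuity of Betti numbers runs in the wrong direction: semicontinuity allows Betti numbers to jump \emph{up} under specialization, so a computation at the special point $q=q'=0$ controls only generic $q,q'$, whereas the lemma is applied in the body of the paper to specific perturbations. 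You must follow your first suggestion instead: carry the coefficients of $q,q'$ as generic parameters and arrange that the witnesses for the height inequalities on the ideals of minors lie in $k[x,y,z,a,b]$, whose height-$5$ hypothesis is all you are given, so that they survive arbitrary specialization of those coefficients.
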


\begin{lem}\label{primary42} If
\[J = (x^2,xy,xz,y^2,yz,z^3,ax+by+z^2),\]
where $\h(x,y,z) = 3$, then $J$ is $(x,y,z)$-primary, $\pd(S/J) = 3$ and $e(S/J) = 4$.
\end{lem}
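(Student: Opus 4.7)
The plan is to establish the lemma by writing down an explicit minimal free resolution of $S/J$ of length $3$ and verifying its exactness via the Buchsbaum--Eisenbud criterion, following the template used throughout this appendix (see for instance Lemmas~\ref{primary6} and \ref{primary21}).

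First, I would observe that $\sqrt{J} = \p := (x,y,z)$: the inclusion $J \subseteq \p$ is clear, while $(x,y,z)^3 \subseteq J$ follows from $(x,y,z)\cdot(x^2, xy, xz, y^2, yz) \subseteq (x^2, xy, xz, y^2, yz)$ together with $(x,y,z)\cdot z^2 = (xz^2, yz^2, z^3) \subseteq (xz, yz, z^3) \subseteq J$. Thus once $J$ is shown to be unmixed it will automatically be $\p$-primary. Next, writing $g := ax + by + z^2$ and ordering the generators as $(x^2, xy, xz, y^2, yz, z^3, g)$, I would record the obvious first syzygies: the six pairwise Koszul-type relations among the monomials $x^2, xy, xz, y^2, yz$; the two ``mixed'' relations arising from $xg = ax^2 + bxy + z(xz)$ and $yg = axy + by^2 + z(yz)$, namely $xg - a\cdot x^2 - b\cdot xy - z\cdot(xz) = 0$ and its $y$-analogue; and the relation $zg - a(xz) - b(yz) - z^3 = 0$. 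These (together with possibly a few more) generate $\ker \partial_1$, and $\partial_2, \partial_3$ are then obtained by iterating the syzygy computation.

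Exactness is checked via Buchsbaum--Eisenbud by verifying $\h(I_{r_j}(\partial_j)) \ge j$ for $j = 1, 2, 3$. For $j = 1$ this is $\h(J) = 3$, and for $j = 2, 3$ the maximal minor ideals will contain sufficient powers of $x, y, z$ (coming from the Koszul-type syzygies) to force height at least $3$. From the exactness of the resulting length-$3$ resolution we conclude $\pd(S/J) = 3 = \h(J)$, so $S/J$ is Cohen--Macaulay, hence unmixed, and thus $\p$-primary. Finally, for the multiplicity I compute $e(S/J) = \lambda(S_\p/J_\p)$ directly: in the case relevant to Proposition~\ref{1;4} (where at least one of $a, b$ is a unit in $S_\p$), say $a \notin \p$, the relation $g \equiv 0$ gives $x \equiv -a^{-1}(by + z^2) \pmod{J_\p}$, and the remaining relations $x^2, xy, xz, y^2, yz, z^3 \in J$ reduce $S_\p/J_\p$ to the $\kappa(\p)$-span of $\{1, y, z, z^2\}$, which has length $4$.

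The main obstacle is the bookkeeping in the height verifications of Step~3: one must write the $7 \times r_2$ and $r_2 \times r_3$ matrices explicitly and exhibit enough non-vanishing maximal minors in each, with the matrix entries being a mix of linear forms in $x,y,z$ and the extra linear forms $a,b$. This is mechanical but bulky by hand, and the cleanest route is a Macaulay2 check \cite{Mac2} in the style already used for the other lemmas of this appendix.
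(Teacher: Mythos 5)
Your proposal matches the paper's treatment: the appendix proves these lemmas uniformly by exhibiting the (here length-$3$) free resolution, verifying exactness with the Buchsbaum--Eisenbud criterion so that $\pd(S/J)=3=\h(J)$ forces $S/J$ Cohen--Macaulay, hence unmixed and $(x,y,z)$-primary, with the remaining verifications delegated to Macaulay2 exactly as you suggest. Your local length computation of $\lambda(S_\p/J_\p)=4$ is a correct (and slightly more explicit) way to get the multiplicity; just note that when $a,b\in\p$ the ideal degenerates to $(x,y,z)^2$, where all the claims hold trivially, so the lemma is covered in full generality.
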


\begin{lem}\label{primary43} If
\[J = (x^2, xy, xz, y^3, y^2z, yz^2, z^3, ax+by+cz),\]
where $\h(x,y,z,b,c) = 5$, then $J$ is $(x,y,z)$-primary, $\pd(S/J) = 4$ and $e(S/J) = 4$.
\end{lem}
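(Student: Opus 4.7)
The strategy is to decompose $J = K + (ax+by+cz)$, where $K = (x^2, xy, xz, y^3, y^2z, yz^2, z^3)$ is a Cohen-Macaulay monomial ideal, and to analyze the colon ideal $L := K:(ax+by+cz)$. First I would verify that $K$ is $(x,y,z)$-primary, Cohen-Macaulay with $\pd(S/K) = 3$ and $e(S/K) = 7$: the factorization $K = (x,y,z)(x, (y,z)^2)$ produces a short exact sequence
\[ 0 \to x(y,z)^3 \to x(x,y,z) \oplus (y,z)^3 \to K \to 0, \]
which bounds $\pd(S/K) \leq 3$ (with the matching lower bound automatic from $\h(K)=3$), while $\lambda(S_\p/K_\p) = 7$ follows immediately from the monomial Hilbert function.

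Next I claim $L = (x) + (y,z)^2 = (x, y^2, yz, z^2)$. The inclusion $(x, y^2, yz, z^2) \subseteq L$ is direct: $(x,y,z)^2 \cdot (ax+by+cz) \subseteq (x,y,z)^3 \subseteq K$ and $x \cdot (ax+by+cz) = ax^2 + bxy + cxz \in K$. For the reverse containment, I first compute $e(S/J) = 4$ from the local Hilbert function of $S_\p/J_\p$, whose graded pieces are $(1, 2, 1, 0, \ldots)$. The hypothesis $\h(x,y,z,b,c)=5$ is crucial here: it guarantees $b, c$ are units at $\p$, producing the relations $by^2+cyz\equiv 0$ and $byz+cz^2\equiv 0 \pmod K$ (from $y\cdot(ax+by+cz)$ and $z\cdot(ax+by+cz)$), which collapse the three-dimensional span of $y^2,yz,z^2$ to a single line. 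Then the short exact sequence
\[ 0 \to (S/L)(-1) \xrightarrow{\cdot(ax+by+cz)} S/K \to S/J \to 0 \]
together with additivity of multiplicity gives $e(S/L) = 7 - 4 = 3 = e(S/((x)+(y,z)^2))$; since both ideals are $\p$-primary of equal height and multiplicity and one contains the other, Lemma~\ref{unmixedequal} forces equality.

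With $L$ identified, the same short exact sequence yields: $\ass(S/J) \subseteq \ass(S/K) \cup \ass(S/L) = \{\p\}$, so $J$ is $\p$-primary; $e(S/J) = 4$; and $\pd(S/J) \leq \max(\pd(S/L)+1,\pd(S/K)) = \max(4,3) = 4$, where $\pd(S/L)=3$ because $L = (x) + (y,z)^2$ is the tensor of the Koszul complex on $x$ with the resolution of $(y,z)^2$. The subtlest remaining step is the lower bound $\pd(S/J) \geq 4$, equivalent to $S/J$ not being Cohen-Macaulay. Via the long exact sequence of local cohomology at the irrelevant maximal ideal $\m$ of $S$, this reduces to showing that multiplication by $ax+by+cz$ on $H^{\dim S - 3}_\m(S/K)$ has nontrivial kernel; since $ax+by+cz$ lies in the unique associated prime $\p$ of $S/K$, this is expected, and can be verified either by computing the canonical module of $S/K$ explicitly or, as the appendix suggests, by direct computer algebra verification using Macaulay2.
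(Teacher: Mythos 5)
Your decomposition $J=K+(f)$ with $K=(x^2,xy,xz)+(y,z)^3$ and $f=ax+by+cz$, the identification $L=K:f=(x,y^2,yz,z^2)$ via the multiplicity count and Lemma~\ref{unmixedequal}, the computation $e(S/J)=4$, and the bound $\pd(S/J)\le\max\{\pd(S/K),\pd(S/L)+1\}=4$ are all correct, and this is a genuinely different, more structural route than the paper's, which for all the appendix lemmas simply exhibits an explicit free resolution and verifies the Buchsbaum--Eisenbud rank and height conditions (by computer, except for two worked examples). Your deferral of the lower bound $\pd(S/J)\ge 4$ to a canonical-module or Macaulay2 computation is consistent with the paper's own level of rigor here, though the phrase ``this is expected'' is not an argument: $f$ annihilates neither $H^{\dim S-3}_{\mm}(S/L)$ nor $H^{\dim S-3}_{\mm}(S/K)$, and what must be shown is that the induced map between them has nonzero kernel.

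The genuine gap is in the claim that $J$ is $\p$-primary. The inclusion $\ass(S/J)\subseteq\ass(S/K)\cup\ass(S/L)$ is not a valid principle for the cokernel of an injection: for $0\to A\to B\to C\to 0$ one has $\ass(B)\subseteq\ass(A)\cup\ass(C)$, but there is no such containment for $\ass(C)$ (consider $0\to(x)\to S\to S/(x)\to 0$, where $\ass(S/(x))=\{(x)\}$ while $\ass(S)=\ass((x))=\{(0)\}$). What your exact sequence actually yields, via the depth lemma, is $\depth(S/J)_\q\ge\min\{\depth(S/K)_\q,\,\depth(S/L)_\q-1\}=\h(\q)-4$ for primes $\q\supseteq\p$, since $S/K$ and $S/L$ are Cohen--Macaulay of codimension $3$; this excludes embedded primes of height at least $5$ but says nothing about height-$4$ primes containing $(x,y,z)$, which is exactly where an embedded component could sit. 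Equivalently, $H^0_{\q}((S/J)_\q)=\ker\bigl(H^1_{\q}((S/L)_\q)\to H^1_{\q}((S/K)_\q)\bigr)$, and injectivity of that map must be verified --- it is not formal, especially since your own $\pd(S/J)\ge4$ claim asserts that the analogous map at the irrelevant maximal ideal is \emph{not} injective. The paper certifies unmixedness precisely by checking $\h(I_{r_j}(\partial_j))\ge j+1$ for $j\ge 4$ on the explicit resolution (the criterion of \cite[Proposition 2.4]{HMMS2}); some such verification, or a direct analysis of the height-$4$ primes over $\p$, is needed to close your argument.
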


\begin{lem}\label{primary44} If
\[J = (x^2, xy, y^2, ax + by + z^2, cx + dy) ,\]
where $\h(x,y,z,c,d) = 5$, then $J$ is $(x,y,z)$-primary, $\pd(S/J) = 4$ and $e(S/J) = 4$.
\end{lem}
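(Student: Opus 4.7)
The strategy is the one used throughout this appendix: produce an explicit candidate minimal graded free resolution of $S/J$ and apply the Buchsbaum--Eisenbud exactness criterion, with the height of the last ideal of minors simultaneously certifying unmixedness. The radical claim is immediate: $x^2,xy,y^2\in J$ gives $x,y\in\sqrt{J}$, and then $z^2\equiv ax+by+z^2\pmod{(x,y)}$ gives $z\in\sqrt{J}$, so $\sqrt{J}=\p$.

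For the projective dimension, I would produce (by running Macaulay2 on a generic instance and transcribing) a candidate resolution
\[
0\longrightarrow F_4\longrightarrow F_3\longrightarrow F_2\longrightarrow F_1=S^5\longrightarrow S\longrightarrow S/J\longrightarrow 0,
\]
and verify the usual Buchsbaum--Eisenbud inequalities $\h(I_{r_j}(\partial_j))\ge j$ for $j\le 3$ together with $\h(I_{r_4}(\partial_4))\ge 5$. The first three bounds should fall out immediately because $x^2,y^2\in I_1(\partial_1)$ and the Koszul-type syzygies on $x^2,xy,y^2$ inject many of the lower-order minors. The fourth is where the hypothesis $\h(x,y,z,c,d)=5$ is essential: the $r_4\times r_4$ minors of $\partial_4$ will be expressible in terms of $x,y,z,c,d$, and the hypothesis is exactly what forces height $5$. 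Once these heights are checked, exactness yields $\pd(S/J)\le 4$, the height-$5$ bound yields unmixedness by \cite[Proposition 2.4]{HMMS2}, and combined with $\sqrt{J}=\p$ this upgrades to $\p$-primary; the equality $\pd(S/J)=4$ follows because $F_4\neq 0$ in the minimal resolution.

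For the multiplicity, since $\p$ is a linear prime, $e(S/J)=\lambda(S_\p/J_\p)$. The hypothesis $\h(x,y,z,c,d)=5$ makes $c$ and $d$ units in $S_\p$, so I would use $cx+dy\in J_\p$ to solve $y\equiv -d^{-1}cx\pmod{J_\p}$. Under this substitution $xy$ and $y^2$ become multiples of $x^2$, while $ax+by+z^2$ becomes $\epsilon x+z^2$ with $\epsilon=(ad-bc)/d$. Passing to the $2$-dimensional regular local ring $T_\p:=S_\p/(cx+dy)$ with maximal ideal $\mathfrak{n}=(x,z)$, I would compute the $\mathfrak{n}$-adic Hilbert function of $T_\p/(x^2,\epsilon x+z^2)$ and find $(1,2,1,0,\ldots)$ in every case: $x(\epsilon x+z^2)\in(x^2)$ forces $xz^2\in J$, and $z(\epsilon x+z^2)=\epsilon xz+z^3$ together with $\epsilon xz\in(x^2z,xz^2)$ (valid whether $\epsilon$ is a unit or lies in $\mathfrak{n}$) forces $z^3\in J$, so $\mathfrak{n}^3\subseteq J_\p T_\p$. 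Summing gives $\lambda=4$, hence $e(S/J)=4$.

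The main obstacle will be the honest (non-generic) verification of $\h(I_{r_4}(\partial_4))\ge 5$: a successful Macaulay2 run on a generic instance settles exactness and $\pd=4$ for one choice of coefficients, but translating this into a proof valid for all $c,d$ satisfying $\h(x,y,z,c,d)=5$ requires explicitly exhibiting enough $r_4\times r_4$ minors whose entries collectively generate an ideal of the same height as $(x,y,z,c,d)$. Everything else---the radical computation, the reduction in $S_\p$, and the Hilbert-function count in $T_\p$---is straightforward once that heights check is in hand.
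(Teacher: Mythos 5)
Your proposal follows essentially the same route as the paper, which gives no explicit argument for this lemma but defers it to the template of Section A (generic free resolution, Buchsbaum--Eisenbud height checks with $\h(I_{r_4}(\partial_4))\ge 5$ forcing unmixedness, then $\sqrt{J}=\p$ and $\lambda(S_\p/J_\p)=4$) verified by Macaulay2. One small correction to your length count: when $ad-bc\notin\p$ the element $\epsilon$ is a unit, the containment $\epsilon xz\in(x^2z,xz^2)$ fails and $\mathfrak{n}^3\not\subseteq J_\p$; in that case $T_\p/(x^2,\epsilon x+z^2)\cong k[[z]]/(z^4)$ has Hilbert function $(1,1,1,1)$ rather than $(1,2,1)$, but the length is still $4$, so the conclusion is unaffected.
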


This lemma gives the only nontrivial $\langle2;2\rangle$ structure we need to consider.

\begin{lem}\label{22gen} If 
\[J = (x^2, xy, y^2, ax+by, cx+dy, ad-bc+ex+fy),\]
where $(x,y,ad-bc)$ is a height $3$ prime ideal, then $J$ is $(x,y,ad-bc)$-primary, $\pd(S/J) = 3$ and $e(S/J) = 4$.
\end{lem}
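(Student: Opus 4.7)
The plan is to realize $S/J$ as an extension of two Cohen-Macaulay modules of the same dimension via the short exact sequence
$$0 \to M \to S/J \to S/\mathfrak{p} \to 0,$$
where $M := (x,y)/(J\cap(x,y))$ and $\mathfrak{p} = (x,y,ad-bc)$. As a first step, set $q := ad-bc+ex+fy$; since $q \equiv ad-bc \pmod{(x,y)}$, we also have $\mathfrak{p} = (x,y,q)$, so $x,y,q$ is a regular sequence and $S/\mathfrak{p}$ is a complete intersection of height $3$ with multiplicity $e(S/\mathfrak{p}) = 1\cdot 1\cdot 2 = 2$.

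Because $(x,y)^2 \subseteq J$, the submodule $M = (x,y)S/J$ of $S/J$ is annihilated by $(x,y)$ and hence is a module over $R := S/(x,y)$. The key computational step is the identity
$$J \cap (x,y) = (x,y)^2 + (ax+by,\,cx+dy).$$
The inclusion $\supseteq$ is immediate. For $\subseteq$, expand any element of $J\cap(x,y)$ in the six generators of $J$; the coefficient $r$ of $q$ must lie in $(x,y)$, since otherwise $r(ad-bc) \notin (x,y)$ in the domain $R$ (note $\overline{ad-bc} \neq 0$ there). The Cramer-style identities
$$(ad-bc)x = d(ax+by) - b(cx+dy),\qquad (ad-bc)y = a(cx+dy) - c(ax+by)$$
then push $rq$ into $(x,y)^2 + (ax+by,\,cx+dy)$. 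Identifying the images of $x$ and $y$ as generators of $M$ yields the $R$-presentation
$$R^2 \xrightarrow{\;A\;} R^2 \longrightarrow M \longrightarrow 0, \qquad A = \begin{pmatrix}\bar a & \bar c \\ \bar b & \bar d\end{pmatrix}.$$

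Since $\det A = \overline{ad-bc}$ is a nonzero element of the domain $R$, the map $A$ is injective, so $\pd_R M = 1$; combining with the Koszul resolution of $R$ over $S$ gives $\pd_S M \leq 3$ and thus $\depth_S M \geq \dim S - 3$ by Auslander-Buchsbaum. Cramer's rule shows $\overline{ad-bc}$ annihilates $M$, so $M$ is an $S/\mathfrak{p}$-module. The primality of $\mathfrak{p}$ rules out $a,b,c,d \in (x,y)$ (otherwise $ad-bc \in (x,y)$ and $\mathfrak{p}$ would collapse to height $2$), and any linear form outside $(x,y)$ stays nonzero modulo the quadric $\overline{ad-bc}$; therefore $A \bmod \mathfrak{p}$ has rank exactly $1$, and $M$ has rank $1$ over the domain $S/\mathfrak{p}$. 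In particular $\dim_S M = \dim (S/\mathfrak{p}) = \dim S - 3$, so $M$ is Cohen-Macaulay.

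The depth lemma applied to the short exact sequence then gives $\depth(S/J) \geq \dim(S/J)$, so $S/J$ is CM and $\pd(S/J) = 3$. Additivity of multiplicity along the sequence yields
$$e(S/J) \;=\; e(M) + e(S/\mathfrak{p}) \;=\; \rank_{S/\mathfrak{p}}(M)\cdot e(S/\mathfrak{p}) + e(S/\mathfrak{p}) \;=\; 2+2 \;=\; 4.$$
Finally, $J \subseteq \mathfrak{p}$ together with the unmixedness of $J$ (forced by Cohen-Macaulayness) gives $\ass(S/J) = \{\mathfrak{p}\}$, so $J$ is $\mathfrak{p}$-primary. The main hurdle is the explicit description of $J \cap (x,y)$ via the Cramer identities; once this is in place, the comparison of $S/J$ with the complete intersection $S/\mathfrak{p}$ through the rank-$1$ intermediate module $M$ is routine depth and multiplicity bookkeeping.
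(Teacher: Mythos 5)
Your proof is correct, and it takes a genuinely different route from the paper. The paper's proof of this lemma (like the other appendix lemmas) proceeds by writing down the free resolution of $J$ generically and verifying exactness and unmixedness via the Buchsbaum--Eisenbud height criteria on the ideals of minors of the differentials, a verification delegated to Macaulay2. You instead exploit the exact sequence $0 \to \p/J \to S/J \to S/\p \to 0$ with $\p = (x,y,ad-bc)$, identify $\p/J \cong (x,y)/(J\cap(x,y))$ via your computation $J\cap(x,y) = (x,y)^2 + (ax+by,cx+dy)$ (the one genuinely nontrivial step, and your Cramer-identity argument for it is right), and recognize this module as the cokernel of a $2\times 2$ matrix of linear forms over $S/(x,y)$ with nonzero determinant --- hence a rank-one maximal Cohen--Macaulay $S/\p$-module of multiplicity $2$. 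The depth lemma and additivity of multiplicity then finish everything at once, including the $\p$-primary claim via unmixedness of Cohen--Macaulay quotients. What the paper's method buys is uniformity: the same mechanical recipe disposes of all the appendix lemmas, including those where no such clean filtration is visible. What your method buys is a computer-free, structural explanation: it exhibits $S/J$ as an extension of the complete intersection $S/\p$ by an MCM module, which makes the Cohen--Macaulayness and the value $e(S/J)=2+2=4$ transparent rather than emerging from a rank computation on explicit matrices.
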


The last two lemmas are referenced in the body of the paper.

\begin{lem}\label{1211case} If
\[J = (ax+by+cz) + (x,y,z)(u,v),\]
where $\h(u,v,x,y,z) = 5$ and $\h(u,v,a,b,c) \ge 3$, then $J$ is unmixed and has resolution
  \[\xymatrix{
S & \ar[l]_{\partial_1} S^7 & \ar[l]_{\partial_2} S^{11} & \ar[l]_{\partial_3} S^6 & \ar[l]_{\partial_4 = {\small \begin{pmatrix} 0\\-z\\y\\-x\\-v\\u\end{pmatrix}}} S^1 & \ar[l] 0,}      \]
\end{lem}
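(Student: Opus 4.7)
The plan is to recognize $J$ as a well-understood intersection of two height-three ideals, deduce unmixedness from this, and then verify the claimed resolution via the Buchsbaum-Eisenbud exactness criterion.

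First I would show $J = (x,y,z) \cap (u,v,q)$, where $q = ax+by+cz$. Since $\h(u,v,x,y,z) = 5$, the sequence $x,y,z,u,v$ is regular, so the ideal-theoretic identity $(x,y,z) \cap (u,v) = (x,y,z)(u,v)$ holds. Because $q \in (x,y,z)$, any element of $(x,y,z) \cap (u,v,q)$ can be written as $\alpha q + \beta$ with $\beta \in (u,v) \cap (x,y,z) = (x,y,z)(u,v)$, so it lies in $J$; the reverse inclusion is immediate. Next I would verify that $u,v,q$ is a regular sequence: if $q \in (u,v)$, then reducing modulo the linear prime $(u,v)$ gives $ax+by+cz \equiv 0$ with $x,y,z$ linearly independent over $S/(u,v)$, which forces $a,b,c \in (u,v)$ and contradicts $\h(u,v,a,b,c) \ge 3$. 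Hence $(u,v,q)$ is an unmixed height-three complete intersection, and $J$ is an intersection of two unmixed height-three ideals, so $J$ itself is unmixed of height three.

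For the resolution I would write the differentials explicitly. The seven generators of $J$ are $q, xu, yu, zu, xv, yv, zv$; the eleven columns of $\partial_2$ arise naturally as (i) the two relations $uq = a(xu)+b(yu)+c(zu)$ and $vq = a(xv)+b(yv)+c(zv)$, (ii) the three Koszul-type syzygies each among $\{xu,yu,zu\}$ and $\{xv,yv,zv\}$, and (iii) the three ``cross'' syzygies $v(xu)-u(xv), v(yu)-u(yv), v(zu)-u(zv)$. The six columns of $\partial_3$ are obtained as Koszul-type combinations of the above: two Koszul relations among the $(xyz)$-Koszul syzygies, three relations expressing that a cross syzygy multiplied by a coordinate equals the corresponding $u$- or $v$-Koszul times $u$ or $v$, and one relation $v \cdot s_1 - u \cdot s_2 + a s_9 + b s_{10} + c s_{11} = 0$ tying together the $q$-relations with the cross relations. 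A direct calculation shows $\partial_3 \cdot (0,-z,y,-x,-v,u)^T = 0$ so the claimed $\partial_4$ is indeed a syzygy of $\partial_3$.

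Finally I would apply the Buchsbaum-Eisenbud criterion with expected ranks $r_1 = 1, r_2 = 6, r_3 = 5, r_4 = 1$. The key verifications are $\h(I_1(\partial_1)) \ge 1$ (immediate), $\h(I_1(\partial_4)) = \h(x,y,z,u,v) = 5 \ge 4$ (which also supplies unmixedness at the end of the resolution), and the corresponding height bounds on $I_6(\partial_2)$ and $I_5(\partial_3)$. The hard part is the last: one must check that the $6 \times 6$ minors of $\partial_2$ generate an ideal of height at least $2$, and that the $5\times 5$ minors of $\partial_3$ generate an ideal of height at least $3$ (in fact $\ge 4$ to reconfirm unmixedness from the resolution). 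These are mechanical but lengthy minor computations, handled either by the case analysis using the hypotheses $\h(u,v,x,y,z) = 5$ and $\h(u,v,a,b,c) \ge 3$, or by invoking the Macaulay2 computation referenced in the appendix; this is the main technical obstacle in the proof.
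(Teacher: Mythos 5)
Your overall strategy---decompose $J$ as $(x,y,z)\cap(u,v,q)$ with $q=ax+by+cz$ to get unmixedness, then certify the displayed complex by the Buchsbaum--Eisenbud criterion with the minor-height checks done mechanically or by computer---is essentially the paper's own approach: the appendix offers no argument specific to this lemma beyond the general recipe of verifying $\h(I_{r_j}(\partial_j))$, delegated to the posted Macaulay2 file. Your identification $J=(x,y,z)\cap(u,v,q)$ is correct (using $(x,y,z)\cap(u,v)=(x,y,z)(u,v)$ for the regular sequence $x,y,z,u,v$), and it is a clean independent route to unmixedness, \emph{provided} $u,v,q$ really is a regular sequence.

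That is where the gap lies. The implication ``$ax+by+cz\equiv 0$ modulo $(u,v)$ forces $a,b,c\in(u,v)$'' is false: modulo the linear prime $(u,v)$ the syzygies of $x,y,z$ are the Koszul syzygies, so $(a,b,c)$ need only be a scalar combination of $(y,-x,0),(z,0,-x),(0,z,-y)$ modulo $(u,v)$. Concretely, take $a=y+u$, $b=-x$, $c=0$: then $q=ux\in(u,v)$, yet $\h(u,v,x,y,z)=5$ and $\h(u,v,a,b,c)=\h(u,v,x,y)=4\ge 3$, so both stated hypotheses hold. In that case $J=(x,y,z)(u,v)=(x,y,z)\cap(u,v)$ is mixed (associated primes of heights $3$ and $2$) and has only six minimal generators, so the conclusion of the lemma fails too. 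Thus the step is not a repairable oversight in your write-up so much as a sign that the hypothesis $\h(u,v,a,b,c)\ge 3$ is too weak to guarantee $q\notin(u,v)$; what is actually needed is that $u,v,q$ form a regular sequence (equivalently $\h(J)=3$), which does hold at the point where the lemma is invoked in Lemma~\ref{pd1211}, since there $(u,v,q)$ is a height-three prime. With that hypothesis in place, your unmixedness argument and your deferral of the rank-condition verifications to the Buchsbaum--Eisenbud criterion (exactly as the paper does) go through.
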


\begin{lem}\label{1112case} If
\[J = (x^2,xy,xz,y^2,yz,ax+by+wz,cx+dy),\]
where $\h(x,y,w) = 3$ and $\h(c,d,x,y,z) = 5$, then $J$ is unmixed and has resolution
  \[\xymatrix{
S & \ar[l]_{\partial_1} S^7 & \ar[l]_{\partial_2} S^{11} & \ar[l]_{\partial_3} S^6 & \ar[l]_{\partial_4 = {\small \begin{pmatrix} 0\\-z\\y\\-x\\c\\d\end{pmatrix}}} S^1 & \ar[l] 0,}      \]
\end{lem}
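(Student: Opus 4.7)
The plan is to follow the general strategy of the appendix: exhibit an explicit complex
\[
0 \to S \xrightarrow{\partial_4} S^6 \xrightarrow{\partial_3} S^{11} \xrightarrow{\partial_2} S^7 \xrightarrow{\partial_1} S \to S/J \to 0,
\]
with $\partial_1$ the row of generators and $\partial_4$ the prescribed column, verify it is a complex, and invoke the Buchsbaum--Eisenbud acyclicity criterion. Since $\h(J) = 3$, exactness combined with the bound $\h(I_1(\partial_4)) \ge 4$ will simultaneously show that the complex is a free resolution of $S/J$ and that $J$ is unmixed.

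For $\partial_2\colon S^{11} \to S^7$, I will write down $11$ explicit linear first syzygies on the generators $g_1 = x^2, g_2 = xy, g_3 = xz, g_4 = y^2, g_5 = yz, g_6 = ax+by+wz, g_7 = cx+dy$. Six come from the resolution of $(x,y)(x,y,z) = (g_1,\ldots,g_5)$ (whose Hilbert function forces exactly six independent linear syzygies): the Koszul-like relations $yg_1 = xg_2$, $zg_1 = xg_3$, $yg_2 = xg_4$, $zg_2 = xg_5$, $yg_3 = xg_5$, and $zg_4 = yg_5$. Two more come from $xg_6 = ag_1 + bg_2 + wg_3$ and $yg_6 = ag_2 + bg_4 + wg_5$. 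The last three come from $xg_7 = cg_1 + dg_2$, $yg_7 = cg_2 + dg_4$, and $zg_7 = cg_3 + dg_5$.

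For $\partial_3\colon S^6 \to S^{11}$, the six columns are linear second syzygies on $\partial_2$. Two arise from the second syzygies appearing in the resolution of $(x,y)(x,y,z)$ (purely among the first six columns of $\partial_2$). Two more couple the $g_6$-syzygies with the Koszul-type syzygies via $a, b, w$: for instance, the combination $y(xg_6 - ag_1 - bg_2 - wg_3) - x(yg_6 - ag_2 - bg_4 - wg_5)$ is expressible through the Koszul syzygies, yielding a relation of second syzygies. The last two couple the three $g_7$-syzygies with Koszul syzygies via $c, d$ in an analogous fashion. The prescribed form of $\partial_4 = (0,-z,y,-x,c,d)^T$ fixes the ordering and signs of the six columns so that $\partial_3 \partial_4 = 0$; the compositions $\partial_1\partial_2 = 0$ and $\partial_2\partial_3 = 0$ follow by direct matrix multiplication.

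To finish, I apply the Buchsbaum--Eisenbud criterion with expected ranks $r_1 = 1$, $r_2 = 6$, $r_3 = 5$, $r_4 = 1$. Three of the four height conditions are immediate: $\h(I_1(\partial_1)) = \h(J) = 3$; $\h(I_1(\partial_4)) = \h(x,y,z,c,d) = 5$, which is both $\ge 4$ for exactness and $\ge \h(J)+1$ for unmixedness; and $\h(I_6(\partial_2)) \ge 2$ because the Koszul block of $\partial_2$ yields $6\times 6$ minors divisible by pure powers of $x$ and of $y$. The main obstacle is the stage-three check $\h(I_5(\partial_3)) \ge 3$: the columns of $\partial_3$ intertwine Koszul relations with the coefficients $a, b, c, d, w$, so one must carefully exhibit $5 \times 5$ minors whose generating set contains a regular sequence of length three. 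I expect this to work out by extracting minors containing a power of $x$, a power of $y$, and a form involving $z$ that is regular modulo $(x,y)$ thanks to the hypothesis $\h(x,y,w) = 3$ (with $\h(c,d,x,y,z) = 5$ ensuring linear independence of the relevant forms). As suggested in the appendix, this final determinantal check is most cleanly executed with a computer algebra system such as Macaulay2.
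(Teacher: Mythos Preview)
Your approach is exactly the paper's: exhibit the complex with the prescribed $\partial_1$ and $\partial_4$, check it is a complex, and apply the Buchsbaum--Eisenbud acyclicity criterion together with the extra height condition for unmixedness, relegating the harder determinantal verifications to a computer algebra system (the paper gives no proof for this particular lemma beyond pointing to its supplementary Macaulay2 file). One small correction: the unmixedness criterion requires $\h(I_{r_j}(\partial_j)) \ge j+1$ for each $j > \h(J)$, which here means $\h(I_1(\partial_4)) \ge 5$, not merely $\ge \h(J)+1 = 4$ as you wrote; since your computed value $\h(x,y,z,c,d) = 5$ meets this stronger bound, the conclusion stands.
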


\section*{Acknowledgements}
This work was started while all the authors were members of the MSRI program on Commutative Algebra. We thank MSRI for its support and great mathematical environment.  We thank the referee for several suggestions that improved this paper.   We also thank Chuck Weibel for  handling our paper through the later editorial stages.

\bibliographystyle{amsplain}
\bibliography{bibht3}

\providecommand{\bysame}{\leavevmode\hbox to3em{\hrulefill}\thinspace}
\providecommand{\MR}{\relax\ifhmode\unskip\space\fi MR }
\providecommand{\MRhref}[2]{%
  \href{http://www.ams.org/mathscinet-getitem?mr=#1}{#2}
}
\providecommand{\href}[2]{#2}
\begin{thebibliography}{10}

\bibitem{AH2}
Tigran Ananyan and Melvin Hochster, \emph{Small subalgebras of polynomial rings
  and stillman's conjecture}, preprint: arXiv:1610.09268.

\bibitem{AH}
\bysame, \emph{Ideals generated by quadratic polynomials}, Math. Res. Lett.
  \textbf{19} (2012), no.~1, 233--244.

\bibitem{X}
Anonymous, \emph{Correspondence}, Amer. J. Math. \textbf{79} (1957), 951--952.

\bibitem{AM}
M.~F. Atiyah and I.~G. Macdonald, \emph{Introduction to commutative algebra},
  Addison-Wesley Publishing Co., Reading, Mass.-London-Don Mills, Ont., 1969.

\bibitem{BMNSSS}
Jesse Beder, Jason McCullough, Luis N\'u\~nez\ Betancourt, Alexandra Seceleanu,
  Bart Snapp, and Branden Stone, \emph{Ideals with larger projective dimension
  and regularity}, J. Sym. Comp. \textbf{46} (2011), 1105--1113.

\bibitem{BS}
Markus Brodmann and Peter Schenzel, \emph{On varieties of almost minimal degree
  in small codimension}, J. Algebra \textbf{305} (2006), no.~2, 789--801.

\bibitem{DN}
Alessandro De~Stefani and Luis N\'u\~nez Betancourt, \emph{F-thresholds of
  graded rings}, preprint: arXiv:1507.05459.

\bibitem{Eisenbud2}
David Eisenbud, \emph{The geometry of syzygies}, Graduate Texts in Mathematics,
  vol. 229, Springer-Verlag, New York, 2005, A second course in commutative
  algebra and algebraic geometry.

\bibitem{EnTh}
Bahman Engheta, \emph{Bounds on projective dimension}, Ph.D. thesis, University
  of Kansas, 2005.

\bibitem{En}
\bysame, \emph{On the projective dimension and the unmixed part of three
  cubics}, J. Algebra \textbf{316} (2007), no.~2, 715--734.

\bibitem{En2}
\bysame, \emph{Bound on the multiplicity of almost complete intersections},
  Comm. Algebra \textbf{37} (2009), no.~3, 948--953.

\bibitem{En3}
\bysame, \emph{A bound on the projective dimension of three cubics}, J.
  Symbolic Comput. \textbf{45} (2010), no.~1, 60--73.

\bibitem{Mac2}
Daniel~R. Grayson and Michael~E. Stillman, \emph{Macaulay2, a software system
  for research in algebraic geometry}, Available at
  http://www.math.uiuc.edu/Macaulay2/.

\bibitem{Ha}
Robin Hartshorne, \emph{Algebraic geometry}, Springer-Verlag, New York, 1977,
  Graduate Texts in Mathematics, No. 52.

\bibitem{HMMS}
Craig Huneke, Paolo Mantero, Jason McCullough, and Alexandra Seceleanu,
  \emph{The projective dimension of codimension two algebras presented by
  quadrics}, J. Algebra \textbf{393} (2013), 170--186.

\bibitem{HMMS3}
\bysame, \emph{A multiplicity bound for graded rings and a criterion for the
  {C}ohen-{M}acaulay property}, Proc. Amer. Math. Soc. \textbf{143} (2015),
  no.~6, 2365--2377.

\bibitem{HMMS2}
\bysame, \emph{Multiple structures with arbitrarily large projective dimension
  on linear subspaces}, J. Algebra \textbf{447} (2016), 183--205.

\bibitem{HU}
Craig Huneke and Bernd Ulrich, \emph{Residual intersections}, J. Reine Angew.
  Math. \textbf{390} (1988), 1--20.

\bibitem{Mc}
Jason McCullough, \emph{A family of ideals with few generators in low degree
  and large projective dimension}, Proc. Amer. Math. Soc. \textbf{139} (2011),
  no.~6, 2017--2023.

\bibitem{MS}
Jason McCullough and Alexandra Seceleanu, \emph{Bounding projective dimension},
  Commutative Algebra. Expository Papers Dedicated to David Eisenbud on the
  Occasion of His 65th Birthday (I. Peeva, Ed.), Springer-Verlag London Ltd.,
  London, 2013.

\bibitem{M}
Juan~C. Migliore, \emph{Introduction to liaison theory and deficiency modules},
  Progress in Mathematics, vol. 165, Birkh\"auser Boston, Inc., Boston, MA,
  1998.

\bibitem{Na}
Masayoshi Nagata, \emph{Local rings}, Interscience Tracts in Pure and Applied
  Mathematics, No. 13, Interscience Publishers a division of John Wiley \&
  Sons\, New York-London, 1962.

\bibitem{PS}
Irena Peeva and Mike Stillman, \emph{Open problems on syzygies and {H}ilbert
  functions}, J. Commut. Algebra \textbf{1} (2009), no.~1, 159--195.

\bibitem{PSz}
C.~Peskine and L.~Szpiro, \emph{Liaison des vari\'et\'es alg\'ebriques. {I}},
  Invent. Math. \textbf{26} (1974), 271--302.

\bibitem{Sa}
Pierre Samuel, \emph{La notion de multiplicit\'e en alg\`ebre et en
  g\'eom\'etrie alg\'ebrique}, J. Math. Pures Appl. (9) \textbf{30} (1951),
  159--205.

\bibitem{SH}
Irena Swanson and Craig Huneke, \emph{Integral closure of ideals, rings, and
  modules}, London Mathematical Society Lecture Note Series, vol. 336,
  Cambridge University Press, Cambridge, 2006.

\bibitem{S}
H.~P.~F. Swinnerton-Dyer, \emph{An enumeration of all varieties of degree
  {$4$}}, Amer. J. Math. \textbf{95} (1973), 403--418.

\end{thebibliography}

\end{document}